\documentclass[11pt]{amsart}
\usepackage[T1]{fontenc}
\usepackage{lmodern}
\usepackage{microtype}
\usepackage{mathtools,amssymb}
\usepackage[margin=1.12in]{geometry}
\usepackage{enumitem}
\usepackage{cite}
\usepackage{xcolor}
\usepackage[colorlinks=true,linkcolor=blue!55!black,citecolor=blue!55!black,urlcolor=blue!55!black]{hyperref}
\hypersetup{pdftitle={Prescribed null singular sets for stationary harmonic maps into smooth compact targets},pdfauthor={}}
\numberwithin{equation}{section}
\newtheorem{theorem}{Theorem}[section]
\newtheorem{proposition}[theorem]{Proposition}
\newtheorem{lemma}[theorem]{Lemma}
\newtheorem{corollary}[theorem]{Corollary}
\theoremstyle{definition}

\theoremstyle{remark}
\newtheorem{remark}[theorem]{Remark}
\newcommand{\R}{\mathbb R}
\newcommand{\C}{\mathbb C}
\newcommand{\T}{\mathbb T}
\newcommand{\sph}{\mathbb S^2}
\newcommand{\dd}{\,\mathrm d}
\newcommand{\ep}{\varepsilon}
\newcommand{\la}{\lambda}
\newcommand{\sing}{\operatorname{sing}}
\newcommand{\Div}{\operatorname{div}}
\newcommand{\tr}{\operatorname{tr}}
\newcommand{\capacity}{\operatorname{Cap}}
\newcommand{\dist}{\operatorname{dist}}
\newcommand{\supp}{\operatorname{supp}}
\newcommand{\cH}{\mathcal H}
\newcommand{\cL}{\mathcal L}
\newcommand{\cR}{\mathcal R}
\newcommand{\cF}{\mathcal F}
\newcommand{\cN}{\mathcal N}
\newcommand{\Np}{p_+}
\newcommand{\Sp}{p_-}
\newcommand{\Sph}{\mathbb{S}}
\newcommand{\Sing}{\operatorname{Sing}}

\newcommand{\vol}{\operatorname{vol}}

\newcommand{\norm}[1]{\lVert#1\rVert}
\newcommand{\ip}[2]{\langle#1,#2\rangle}
\newcommand{\Gprod}{G_{\mathrm{prod}}}
\newcommand{\doi}[1]{\href{https://doi.org/#1}{\nolinkurl{doi:#1}}}
\setlist[enumerate]{label=\textup{(\roman*)},leftmargin=2em}
\allowdisplaybreaks[1]
\title[Prescribed singular sets]{Prescribed singular sets for stationary harmonic maps}

\subjclass[2020]{58E20, 35B65, 35J60}
\keywords{Stationary harmonic maps, singular sets, partial regularity, codimension three conjecture}

\begin{document}
\begin{abstract}
A central conjecture in the regularity theory of harmonic maps asserts that the singular set of a stationary harmonic map has codimension at least three. In this paper, we construct counterexamples to this folklore conjecture.
We construct stationary harmonic maps whose singular sets $S$ satisfies $\dim S=n-2$ and $\cH^{n-2}(S)=0$ . The target is a smooth compact manifold diffeomorphic to $\mathbb{S}^2 \times \mathbb{T}^3$, with metric arbitrarily close in $C^\infty$ to the standard product metric.
\end{abstract}

\author{Wenshuai Jiang, Chang Li and Wenyou Yu}

\address[Wenshuai Jiang]{School of Mathematical Sciences, Zhejiang University, Hangzhou 310058, China}

\address[Chang Li]{School of Mathematics, Renmin University of China, Beijing 100872, China}

\address[Wenyou Yu]{School of Mathematical Sciences, Zhejiang University, Hangzhou 310058, China}

 \email{wsjiang@zju.edu.cn}
 \email{chang\_li@ruc.edu.cn}
 \email{wenyouyu1202@zju.edu.cn}

\maketitle

\section{Introduction}

Harmonic maps form a fundamental class of nonlinear elliptic systems in differential geometry and the calculus of variations.  Given Riemannian manifolds $(M^n,g)$ and $(N,h)$, a map $u:M\to N$ is harmonic if it is a critical point of the Dirichlet energy
\[
E(u;\Omega)=\frac12\int_\Omega |du|^2\,d\vol_g .
\]
Their systematic study began with the foundational work of Eells and Sampson~\cite{EellsSampson1964}; the work of Sacks and Uhlenbeck~\cite{SacksUhlenbeck1981} subsequently highlighted the fundamental role of harmonic two-spheres and energy concentration in geometric variational problems.  Standard references include~\cite{EellsLemaire1978,EellsLemaire1988,Helein2002,Jost1991,LinWang2008,Simon1996}.

A fundamental issue in harmonic map theory is regularity.  In dimension two, weakly harmonic maps are smooth by H\'elein's regularity theory~\cite{Helein1990,Helein1991}.  In higher dimensions, however, Rivi\`ere~\cite{Riviere1995} constructed finite-energy weakly harmonic maps that are discontinuous everywhere.  Additional variational structure is therefore essential.  A weakly harmonic map is called \emph{stationary} if it is also critical under compactly supported deformations of the domain.  Every energy-minimizing harmonic map is stationary, but the converse is false.

For energy-minimizing harmonic maps, Schoen and Uhlenbeck~\cite{SU1982} proved the sharp estimate
\begin{equation}\label{eq:minimizing}
\dim_{\cH}\Sing(u)\le n-3.
\end{equation}
Their argument combines small-energy regularity, strong compactness, and dimension reduction.  Simon~\cite{Simon1995} proved rectifiability of the singular set for real-analytic targets.  Naber and Valtorta~\cite{NV2017} later established $(n-3)$-rectifiability and locally finite $(n-3)$-dimensional Hausdorff measure for general smooth targets, together with sharp quantitative Minkowski estimates.  Sharp examples and further structural results for minimizing harmonic maps can be found in~\cite{BCL1986,JagerKaul1983,HardtLin1990,Luckhaus1988,Okayasu1994}; related developments for minimizing $p$-harmonic maps appear in~\cite{HardtLin1987}.

The stationary theory is substantially more delicate.  Stationarity yields a monotonicity formula, but does not by itself provide the strong compactness that holds for minimizers. Evans~\cite{Evans1991}, for sphere-valued maps, and Bethuel~\cite{Bethuel1993}, for general smooth targets, proved small-energy regularity and showed that
\begin{equation}\label{eq:classical}
\cH^{n-2}\bigl(\Sing(u)\bigr)=0.
\end{equation}
Thus the singular set has codimension at least two in the measure-theoretic sense, but \eqref{eq:classical} neither implies a strict Hausdorff-dimension bound nor reaches the minimizing threshold \eqref{eq:minimizing}.  Alternative analytic approaches to partial regularity were developed in~\cite{ChangWangYang1999,RiviereStruwe2008}.

Lin's blow-up theory~\cite{Lin1999} identified the principal obstruction.  For a sequence of stationary harmonic maps with uniformly bounded energy, one may have
\[
u_i\rightharpoonup u\quad\text{in }W^{1,2},
\qquad
|du_i|^2d\vol_g\rightharpoonup |du|^2d\vol_g+\nu,
\]
where the defect measure is of the form
\[
\nu=\vartheta\,\cH^{n-2}\!\restriction\Sigma,
\qquad
\Sigma\ \text{countably $(n-2)$-rectifiable}.
\]
Lin related this codimension-two concentration to nonconstant harmonic two-spheres in the target.  Their absence rules out the corresponding concentration mechanism and leads to strong compactness; under additional hypotheses, including analyticity of the target in Lin's sharper singular-set theorem, one obtains improved dimension and rectifiability estimates.  Li and Tian~\cite{LiTian1998} obtained a blow-up formula in this setting, while Ding, Li, and Li~\cite{DingLiLi2003} showed that a weak limit of stationary harmonic maps need not remain stationary.  These phenomena demonstrate that the weak limit alone need not retain all the geometric information carried by the sequence.

The lost energy is now understood with remarkable precision.  Lin and Rivi\`ere~\cite{LinRiviere2002} proved energy quantization for sphere targets. A major breakthrough was achieved by Naber and Valtorta~\cite{NVEnergy2024}, who established the general energy identity using quantitative stratification: at $\cH^{n-2}$-almost every point of the defect set,
\begin{equation}\label{eq:energy-identity}
\vartheta(x)=\sum_{\alpha=1}^{L(x)}E(\omega_{x,\alpha}),
\qquad
\omega_{x,\alpha}:\Sph^2\longrightarrow N
\end{equation}
for finitely many nonconstant harmonic two-spheres.  Thus all the defect energy is accounted for by the bubbles.

Quantitative stratification gives a complementary description.  Cheeger and Naber~\cite{CheegerNaber2013} developed quantitative estimates for strata defined by the approximate symmetries of tangent maps, and Naber and Valtorta~\cite{NV2017} proved sharp estimates and rectifiability results for these strata.  This theory provides much finer information than a single dimension bound, but information carried by tangent maps must still be distinguished from information carried by limiting energy measures.  In particular, singular points may have constant weak tangent maps, as already occurs in the examples of Hardt, Lin, and Poon~\cite{HLP1992}; see also~\cite[Example~1.2]{Lin1999}.  Thus a direct dimension-reduction argument based only on nonconstant tangent maps cannot capture all possible singular behavior, see also ~\cite{CJN2026}. 

Stronger conclusions are available under additional stability or target assumptions.  Results of Hong~\cite{Hong1999}, Hong and Wang~\cite{HongWang1999}, Hsu~\cite{Hsu2005}, Hsu and Li~\cite{HsuLi2008}, and Lin and Wang~\cite{LinWang2006} show that excluding suitable stable harmonic two-spheres or homogeneous tangent maps can force higher codimension or restore compactness, see also recent progresses \cite{LiX, Kra}.  Closely related results on stability and the Morse index of harmonic two-spheres include~\cite{Xin1980,ElSoufi1995,Karpukhin2021}, while particular singular models and instability phenomena are studied in~\cite{Nakajima2006,Nakajima2009}.  Finite-Morse-index stationary harmonic maps and their regularity play an important role in the recent existence theory of Karpukhin and Stern~\cite{KarpukhinStern2024}.  These results underscore the regularizing effect of stability, but they do not resolve the unrestricted stationary problem.

For general stationary harmonic maps, the central conjecture is
\begin{equation}\label{eq:conjecture}
\dim_{\cH}\Sing(u)\le n-3.
\end{equation}
This is the \emph{codimension-three conjecture}; it is also formulated as Conjecture~1.8 in the  monograph of Chow, Jiang, and Naber~\cite{CJN2026}, see also \cite{NVEnergy2024}.
Despite these substantial developments, there has been little direct progress toward the codimension-three conjecture itself. Actually, we show that it fails when the smooth compact target metric is unrestricted. In fact, any compact null subset of an axis can occur as the full singular set. The metric is part of the construction; no minimizing or stability property is asserted, and the result does not concern a fixed round target.

\subsection{Main results}

Fix $L=10$, put $\T=\R/L\mathbb Z$ and $X=\T^3$, and give $X$ its flat metric. Write its coordinates locally as $x=(x_1,x_2,z)$. For a nonempty compact set $E\subset\T$ of one-dimensional Lebesgue measure zero, define
\[
S_E=\{(0,0,z):z\in E\}\subset X.
\]
The target $\sph\times X$ carries the reference product metric $\Gprod=g_{\sph}+g_X$, with $g_{\sph}$ the unit round metric.

\begin{theorem}\label{thm:main}
For every nonempty compact null set $E\subset\T$, there are smooth Riemannian metrics $G_\delta$ on $\sph\times X$, defined for all sufficiently small $\delta>0$, and maps
\[
U_\delta\in W^{1,2}(X,(\sph\times X,G_\delta))
\]
such that:
\begin{enumerate}
\item $U_\delta$ is weakly harmonic and stationary, and is smooth on $X\setminus S_E$;
\item every point of $S_E$ is an essential discontinuity of $U_\delta$, so $\sing U_\delta=S_E$;
\item $G_\delta\to\Gprod$ in $C^\infty$ as $\delta\downarrow0$;
\item the energies $E_{G_\delta}(U_\delta;X)$ are bounded independently of sufficiently small $\delta$.
\end{enumerate}
Each example uses one fixed smooth positive metric; no limiting metric is needed to obtain its singular set.
\end{theorem}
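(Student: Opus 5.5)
The plan is to build $U_\delta$ in graph form, $U_\delta(x)=(v(x),x)$ with $v:X\to\sph$. The target factor $X$ is there so the map can be essentially an embedding: its second component is the identity. The metric $G_\delta$ is then chosen so that this graph is harmonic, using the freedom to couple the $\sph$ and $X$ directions.

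For a metric of the form $G=g_{\sph}+g_X+\delta\,\beta$, with $\beta$ a mixed or warped term, the harmonic map system for $U=(v,\mathrm{id})$ splits into two parts. The $X$-component is automatically satisfied when $\beta$ is chosen suitably. The $\sph$-component becomes a perturbed harmonic map equation $\tau(v)=\delta\,F(x,v,dv)$, where we control $F$ by choosing $\beta$. Because the identity factor makes $U$ injective, we may realise an essentially arbitrary forcing term. The strategy is therefore to first prescribe $v$ and then read off the metric that makes it harmonic.

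The key steps are as follows.

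\begin{enumerate}
\item \textbf{Local model.} Construct a model map near the axis: $v(x_1,x_2,z)=\phi(z)\cdot\omega\bigl((x_1,x_2)/\rho(z)\bigr)$, a family of degree-one bubbles $\omega:\R^2\to\sph$, or equivalently the map $(x_1,x_2)\mapsto x/|x|$ in angular form with a profile. The scale $\rho(z)$ vanishes exactly on $E$, with $\rho>0$ smooth off $E$. Take $\rho$ comparable to a smooth regularised distance to $E$, for instance a Whitney-type function vanishing to infinite order on $E$.
\item \textbf{Discontinuity on $S_E$.} Off $S_E$ the map $v$ is smooth. At each point of $S_E$ the bubble collapses, so $v$ takes every value of $\sph$ in every neighbourhood; this gives an essential discontinuity.
\item \textbf{Finite energy.} The energy of the collapsing bubbles is finite: the slice energy is $4\pi$ times the length in $z$, plus $\int|\rho'|^2|\log\rho|$-type terms. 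These are finite if $\rho$ is chosen with controlled derivative, and the null measure of $E$ is what permits a smooth $\rho$ with this control. The same bound also gives statement (iv), uniformly in $\delta$.
\item \textbf{Defining the metric.} Define $G_\delta$ by a mixed term $\delta\,\beta$, where $\beta_{(y,x)}$ is built from $v$ evaluated at the base point $x$. The goal is that $U$ is harmonic everywhere off $S_E$, i.e. the mixed Christoffel terms cancel $\tau(v)$ and $\tau(\mathrm{id})$ is corrected. Concretely, scale $v$'s deviation from harmonicity by $\delta$ using a slow parameter. The smoothness of $G_\delta$ requires $\beta$ to be smooth in $x$ even at $S_E$. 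Here the dependence enters through $\delta$-small, bounded-derivative data, so the plan is to use a bubble family whose tension field is small in every $C^k$. This is where I expect to need the infinite-order flatness of $\rho$ at $E$.
\item \textbf{Extension across $S_E$.} Weak harmonicity across $S_E$ follows because $S_E$ has zero $W^{1,2}$-capacity: it has zero $\cH^{1}$ measure in a three-dimensional domain. Stationarity follows similarly. Smoothness off $S_E$ gives the domain-variation identity there, and a cutoff argument removes $S_E$ using $|dU|^2\in L^1$ and capacity zero.
\end{enumerate}

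The main obstacle is step~(iv): finding a single smooth metric on the target that works even though $v$ is singular on $S_E$. The target point $(v(x),x)$ near $x\in S_E$ ranges over all of $\sph\times\{x\}$, so the correction term must be defined and smooth on whole fibres. The approach I would try first is to write the defect of harmonicity of the bubble family as $\nabla_z$-derivatives of conformal parameters. These derivatives can be absorbed by a metric term of the form $dz\otimes\xi$, where $\xi$ is a conformal Killing field on $\sph$ with a coefficient depending on $z$ through $\rho$. That coefficient is smooth precisely because $\rho$ is flat on $E$, and it is $O(\delta)$ in $C^\infty$, which yields (iii).
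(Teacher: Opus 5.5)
Your architecture matches the paper's: a graph map $U=(v,\mathrm{id})$, a metric read off from $v$, and capacity for the target-variation equation. However, the two steps that carry the difficulty do not work as proposed.

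\textbf{Step (iv).} Take the pure bubble family $v=\omega(y/\rho(z))$. Its planar part is harmonic, and a direct computation gives
\[
\tau(v)=\bigl[(\log\rho)''-p_3\,((\log\rho)')^2\bigr]\,\nabla_{\sph}p_3\Big|_{p=v(x)},
\]
where $p_3$ is the height function. At $r=\rho(z)$ its length is exactly $|(\log\rho)''(z)|$.
\begin{enumerate}
\item This depends only on derivatives of $\log\rho$, so rescaling $\rho$ by $\delta$ changes nothing.
\item Since $\log\rho\to-\infty$ at $E$, $(\log\rho)''$ is unbounded near every point of $E$: if it were bounded on a piece of a gap ending at a point of $E$, then $(\log\rho)'$ and $\log\rho$ would be bounded there.
\item Flatness makes $\rho$ smooth, not $\rho'/\rho$.
\end{enumerate}
Your term $c(y)\,dz\otimes\xi$ with $\xi=\nabla p_3$ adds a multiple of $c\,\partial_z(p_3\circ v)$ to the energy density. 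After integration by parts this is a forcing proportional to $c'(z)\nabla p_3(v)$. Cancelling the tension forces $c'\propto(\log\rho)''$, so $c$ is unbounded at $E$. The quadratic term would need a coefficient $((\log\rho)')^2$.

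So no bubble family with collapsing scale has tension small in $C^k$. The missing idea is that $v$ must be \emph{exactly} harmonic on a core around the axis. The paper solves the equivariant equation with the bubble's singular axis data $f(0,z)=0$, $f_r(0,z)=2/\la(z)$ (Theorem~\ref{thm:tube}). It balances a radial Volterra inverse that gains $r^2$ per step against weighted Cauchy estimates that pay for $\partial_z^2$. The solution reaches a cutoff radius only polynomially small in $\dist(z,E)$, while the bubble scale $\la=\delta e^{-A}$ is exponentially small; the map is then cut off to $\Sp$. The tension lives only on the cutoff annuli, where $q$ is $O(\la/\rho)$-close to the pole. Hence $|D^me|\le C_m\la\rho^{-m-3}$, which is flat on $S_E$ and $O(\delta B^{m+3})$. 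Only such an $e$ can be absorbed by a fibrewise-linear metric term, namely $\tfrac23(e(y)\cdot p)I$ in the torus block. The torus equations, which you treat as automatic, then pick up $e\cdot\partial_jq$ and need the Poisson correction $H$.

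\textbf{Step (v).} The capacity argument does not transfer from target variations to domain variations.
\begin{enumerate}
\item For target variations the cutoff error $\int\partial_i\zeta_k\ip{\partial_iU}{\Xi}$ is linear in $DU$, so it is bounded by $\norm{DU}_{L^2}\norm{D\zeta_k}_{L^2}\to0$.
\item For domain variations the error $\int\ip{T_U}{Y\otimes D\zeta_k}$ is quadratic in $DU$. It is not controlled by $|DU|^2\in L^1$ together with $\capacity_2(S_E)=0$.
\end{enumerate}
The claimed implication is in fact false. For the varying bubble $Q$ one has $r(T_Q)_{rz}=-4\partial_z\bigl(r^2/(r^2+\la^2)\bigr)$, so the axial flux through the tube of radius $r$ tends to a multiple of $\int_{\T}1_E(z)\,\partial_zY_z(0,0,z)\dd z$. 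Now take a closed $E$ with empty interior and positive measure. Then $\cH^1(S_E)<\infty$ still gives zero capacity and weak harmonicity, and the comparison of $T_q$ with $T_Q$ goes through unchanged. Yet this flux survives, and stationarity fails.

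The paper instead compares $T_q$ with $T_Q$ in $L^1$ on shrinking tubes (Lemma~\ref{lem:stress-comparison}). It integrates the axial flux by parts in $z$ and uses $|E|=0$ to kill it, which yields the global identity $\Div T_q=\cF$ that the Poisson correction requires. Your proposal spends the nullity of $E$ on the energy bound, where it is not needed: smooth functions vanishing to infinite order exactly on any closed set exist. It is needed here instead.
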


\begin{corollary}\label{cor:sharp}
For every $n\ge3$ and every $R>0$, there exist a smooth compact five-dimensional Riemannian manifold $(N,G)$ and a finite-energy stationary weakly harmonic map $u:B_R^n\to N$ such that
\[
\dim_H\sing u=n-2,\qquad \cH^{n-2}(\sing u)=0.
\]
In particular, the bound $\dim_H\sing u\le n-3$ fails for unrestricted smooth compact targets. The metric $G$ can be chosen arbitrarily close in $C^\infty$ to the round--flat product metric on $\sph\times\T^3$.
\end{corollary}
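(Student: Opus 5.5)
We deduce Corollary~\ref{cor:sharp} from Theorem~\ref{thm:main} in three steps: choose a suitable null set $E$, restrict the torus example to a ball when $n=3$, and pass to higher $n$ by adding dummy variables.

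\emph{Choice of $E$.} Pick a compact set $E\subset\T$ of Lebesgue measure zero but Hausdorff dimension one. For instance, take a union of Cantor sets $C_k$ accumulating at a point (together with that point), with $\dim_H C_k\uparrow1$ and each $C_k$ Lebesgue-null. A single generalized Cantor set with ratios tending to $1/2$ slowly also works. Then $S_E$ satisfies $\dim_H S_E=1=n-2$ for $n=3$, and $\cH^1(S_E)=0$.

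\emph{Case $n=3$.} Apply Theorem~\ref{thm:main} and fix one small $\delta$, so that $G=G_\delta$ is $C^\infty$-close to $\Gprod$. Set $N=(\sph\times\T^3,G)$, which is five-dimensional. The map $U_\delta$ is stationary and weakly harmonic on $X$. Both properties are local, since they are tested against compactly supported variations. Hence the restriction of $U_\delta$ to any open subset is again stationary.

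To handle an arbitrary $R$, we need a ball $B_R^3$ mapped into $X$. The torus has side $L=10$, so for $R$ large a Euclidean ball of radius $R$ does not embed isometrically. We therefore pull back along the covering $\R^3\to\T^3$. The lifted map $\tilde U=U_\delta\circ\pi$ is $L$-periodic and is stationary on $\R^3$. This is because the energy density and the stationarity identity are local, and any compactly supported test vector field on $\R^3$ has support meeting finitely many fundamental domains. Using a partition of unity subordinate to small sets on which $\pi$ is injective, each piece reduces to the statement on $X$.

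Restricting $\tilde U$ to $B_R^3$ centered at $0$ gives finite energy, at most a constant times $(R/L+1)^3 E_G(U_\delta;X)$. The singular set is $\pi^{-1}(S_E)\cap B_R$, a finite union of translates of $S_E$ intersected with a ball centered on the axis. Because the ball is centered at $0\in S_E$, we should make sure its intersection with the singular set still has dimension one. This holds if we arrange that $E$ contains a set of full dimension near $0$; one can choose the accumulation so that every neighborhood of $0$ in $E$ has dimension one. Countable stability of Hausdorff dimension and of $\cH^1$-null sets then gives $\dim_H=1$ and $\cH^1=0$ for the singular set in $B_R$.

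\emph{Case $n>3$.} Define $u(x,y)=\tilde U(x)$ for $x\in\R^3$ and $y\in\R^{n-3}$, restricted to $B_R^n$. A map independent of extra Euclidean variables is weakly harmonic, since its tension field is unchanged. It is also stationary: the energy-momentum tensor $T_{ij}=\ip{\partial_iu}{\partial_ju}-\tfrac12|du|^2\delta_{ij}$ has vanishing $y$-derivatives, its $x$-block is the three-dimensional tensor plus a multiple of the identity $-\tfrac12|d\tilde U|^2\delta_{ij}$, and $\partial_{x_i}|d\tilde U|^2$ terms combine so that $\Div T=0$ follows from $\Div_x T^{(3)}=0$. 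This computation is verified in the distributional sense via Fubini.

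The singular set of $u$ is $(\sing\tilde U\times\R^{n-3})\cap B_R^n$. Discontinuity points of $u$ are exactly products with discontinuity points of $\tilde U$, because $u$ is smooth wherever $\tilde U$ is. By the product inequalities for Hausdorff dimension and measure with a Euclidean factor, namely $\dim_H(A\times\R^m)=\dim_H A+m$ and $\cH^{1+m}(A\times\R^m)=0$ when $\cH^1(A)=0$ (Federer), we get $\dim_H=n-2$ and $\cH^{n-2}=0$.

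\emph{Main obstacle.} The only nonformal points are the existence of a null compact $E$ of full dimension and the verification that stationarity survives lifting and extension. Both are standard. I would write out the extension computation carefully, since it is the step most prone to sign or trace errors.
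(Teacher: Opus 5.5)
Your argument is correct, but you reach an arbitrary radius by a different device than the paper.

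The paper takes one full-dimensional null Cantor set, built explicitly in Lemma~\ref{lem:dimension-one} with a Frostman-measure proof of $\dim_H E=1$. It scales this set into a short arc of $\T$ and restricts $U_\delta$ to a small coordinate ball containing the whole axial copy of $S_E$. It then dilates the domain, using that weak harmonicity and stationarity are invariant under domain dilations. The singular set in $B_R^3$ is therefore exactly one scaled copy of $S_E$.

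You instead lift $U_\delta$ to the universal cover $\R^3$ and restrict to $B_R^3$ centered at $0$. This is valid: the weak equation and $\int T:DY=0$ are linear in the test field, so a partition of unity subordinate to sets on which the covering is injective reduces everything to $X$. It costs you three things that rescaling avoids:
\begin{enumerate}
\item an $R$-dependent energy bound;
\item a countable-stability argument for the finitely many lattice translates of $S_E$ in the ball;
\item an extra requirement on $E$: a small ball centered at $0$ sees only part of $E$, so every neighbourhood of $0$ in $E$ must have dimension one. Your union of Cantor sets accumulating at $0$ does satisfy this.
\end{enumerate}
Your choice of $E$ relies on the standard dimension formula for Cantor sets rather than a proof, which is acceptable.

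The passage to $n>3$ is the same as in the paper: slicing for the weak equation and the block structure of the stress tensor. For the measure and dimension you cite product formulas with a Euclidean factor, whereas the paper uses Fubini inside an $(n-2)$-plane and a product Frostman measure. Both are fine.

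One correction in that step. With your normalization $T_{ij}=\langle\partial_iu,\partial_ju\rangle-\tfrac12|du|^2\delta_{ij}$ and $|du|^2=|d\tilde U|^2$, the $x$-block of $T$ is exactly the three-dimensional tensor $T^{(3)}$, with no extra multiple of the identity. The mixed blocks vanish, and the $y$-block is $-\tfrac12|d\tilde U|^2I_{n-3}$. Hence $\Div T=(\Div_xT^{(3)},\,-\tfrac12 D_y|d\tilde U|^2)$. The second component vanishes distributionally because $|d\tilde U|^2$ is independent of $y$; integrate $\Div_yY''$ over $y$ first, as in the paper. No cancellation among $\partial_{x_i}|d\tilde U|^2$ terms is involved. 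If the $x$-block really carried an extra $-\tfrac12|d\tilde U|^2I$, its divergence would not vanish. The conclusion stands once the description is fixed.
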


The conclusion is compatible with \eqref{eq:classical}: the Hausdorff measure remains zero even when the Hausdorff dimension is $n-2$. The freedom to prescribe $E$ includes finite sets and null Cantor sets of Hausdorff dimension one. Thus the examples are not restricted to isolated singularities or to a fixed model of a fractal set.

\vspace{.5cm}
% The background and main statements occupy the first three pages.
\subsection{Outline of the construction}\label{sec:outline}
The proof separates the construction of a sphere-valued map from the choice of the target metric. 
The intermediate map $q:X\to\sph$ need not be harmonic everywhere. 
Instead, its tension and the divergence of its stress tensor will be smooth and small, although $q$ has the prescribed essential singularities. 
An explicit metric correction then makes the graph $U=(q,\mathrm{id})$ weakly harmonic and stationary.

\smallskip
\noindent\emph{The metric correction.}
Theorem~\ref{prop:repair} gives the general reduction. Suppose that $q$ is smooth away from a set of zero $2$-capacity and that its tension $e$ extends smoothly to $X$. The additional requirement is the full distributional identity
\[
\Div T_q=\cF,\qquad
T_q=|Dq|^2I-2(Dq)^TDq,\qquad
\cF_j=-2e\cdot\partial_jq.
\]
Solve $\Delta V=\cF/2$, put $H_{ij}=\partial_iV_j+\partial_jV_i$, and define the metric
\[
G=g_{\sph}+\left(I+H(y)+\tfrac23(e(y)\cdot p)I\right)_{ij}
\dd y^i\dd y^j.
\]
The term linear in $p$ cancels the spherical Euler equation. The Poisson equation cancels both the torus Euler equation and the stress divergence. The verification uses arbitrary target and domain variations, not only variations preserving the symmetry of the construction.

\smallskip
\noindent\emph{The spherical map.}
On each gap $(a,b)$ of $\T\setminus E$, choose
\[
\la_\delta(z)=\delta\exp\!\left(-\frac1{(z-a)(b-z)}\right).
\]
The transverse bubble $F(r,z)=2\arctan(r/\la_\delta(z))$ has the required behavior at the axis, but is not harmonic when its scale varies with $z$. We solve the equivariant harmonic-map equation with the same analytic axis data. The main point is to continue the solution to a radius much larger than $\la_\delta$. A radial Volterra inverse gains two powers of $r$ and, after the first logarithmic resonance, gives a quadratic denominator in the iteration order. Weighted Cauchy estimates balance this denominator against the loss from two axial derivatives. The resulting convergent construction has canonical coefficients and agrees on overlapping axial disks.

We cut the exact solution off to the south pole at the radius
\[
\rho(z)=\frac{\kappa}{1+|\log\delta|}
\left(1+\frac1{(z-a)(b-z)}\right)^{-4}.
\]
The separation $\la_\delta\ll\rho$ makes the cutoff error small. Exponential flatness makes every derivative of the tension vanish at $S_E$, while the map itself oscillates between the two poles arbitrarily close to each point of $S_E$. Proposition~\ref{prop:construction} gives the resulting map.

It is worth noting that the second step only relies on the fact that closed set $E\subset \mathbb{T}$ contains no interior point while the nullity of $E$ is crucial for the third step.

\smallskip
\noindent\emph{The distributional verification.}
Proposition~\ref{prop:verification} verifies the hypotheses of Theorem~\ref{prop:repair}. The energy estimate follows from the bubble bounds, and the weak equation extends by a capacity argument. Stationarity requires a separate calculation: extending the classical stress divergence smoothly across $S_E$ does not exclude a distribution supported on $S_E$. We compare the stress with that of the explicit varying bubble and compute its flux through shrinking tubes. The axial flux is a $z$-derivative. After integration by parts, its limit vanishes because $E$ has Lebesgue measure zero. This gives the stress identity on the entire domain.

We then apply the metric correction and take products in the unused domain variables. A compact null Cantor set of Hausdorff dimension one gives Corollary~\ref{cor:sharp}. 

The spherical fibers of the corrected target remain totally geodesic round two-spheres. Hence assumptions excluding harmonic two-spheres do not apply. The metric is smooth, but is not required to be real analytic; no assertion is made for real-analytic target metrics.

The paper is orgnaized as follows: Section~\ref{sec:repair} proves Theorem~\ref{prop:repair}. Section~\ref{sec:construction} constructs $q_\delta$ by analytic continuation and cutoff. Section~\ref{sec:verification} verifies its energy, tension, and stress properties. Section~\ref{sec:main-proof} completes the main proofs.

\vspace{.25cm}
\textbf{AI usage.} The authors made substantial use of ChatGPT 6.0. Motivated by Fanghua Lin’s results for targets without nonconstant harmonic two-spheres, we directed the AI to explore target manifolds with an \(\mathbb S^2\) factor. The AI initially generated a low-dimensional prototype, but its presentation was unclear and several key arguments were incomplete. Through extensive author-guided interaction, this prototype was developed into a stronger counterexample. The authors then verified the arguments, filled in the missing details, and reorganized the material into the present exposition. The authors have carefully reviewed the final manuscript and take full responsibility for its mathematical content and proofs.

\section{Reduction to a sphere-valued map}\label{sec:repair}
We reduce the construction of stationary harmonic maps to a problem for a sphere-valued map with prescribed singularities. The inputs are a smooth tension field and a smooth stress divergence, the latter taken on the whole domain. The following theorem gives an explicit correction of the target metric.

Throughout this section, $Dq$ has columns $\partial_iq$. For a sphere-valued map define
\begin{equation}\label{eq:stress-q}
T_q=|Dq|^2I-2(Dq)^TDq.
\end{equation}

\begin{theorem}\label{prop:repair}
Let $S\subset X=\mathbb{T}^3$ be closed, and suppose
\[
\capacity_2(S)=0,\qquad
q\in W^{1,2}(X,\sph)\cap C^\infty(X\setminus S,\sph).
\]
Assume there is $e\in C^\infty(X,\R^3)$ such that
\[
\Delta q+|Dq|^2q=e\quad\text{on }X\setminus S,
\]
and that
\begin{equation}\label{eq:repair-assumptions}
\Div T_q=\cF,\qquad \cF_j=-2e\cdot\partial_jq
\end{equation}
on all of $X$ in distributions, where $\cF$ is smooth. If $\norm e_{C^0}+\norm\cF_{C^0}$ is sufficiently small, there is a smooth positive metric $G$ on $\sph\times X$ for which $U(x)=(q(x),x)$ is stationary and weakly harmonic. For every $m\ge0$,
\begin{equation}\label{eq:metric-Cm}
\norm{G-\Gprod}_{C^m}
\le C_m\bigl(\norm e_{C^m}+\norm\cF_{C^m}\bigr).
\end{equation}
\end{theorem}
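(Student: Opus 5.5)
The plan is to verify directly that the explicit metric from the outline works:
\[
G=g_{\sph}+A_{ij}(p,y)\dd y^i\dd y^j,\qquad A(p,y)=I+H(y)+\tfrac23(e(y)\cdot p)I,
\]
where $H_{ij}=\partial_iV_j+\partial_jV_i$ and $\Delta V=\cF/2$ on $X$. I would check, in order, that $G$ is well defined and smooth, that $U=(q,\mathrm{id})$ is weakly harmonic, and that $U$ is stationary.

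\textbf{Step 1: the metric is well defined and close to $\Gprod$.} Testing the distributional identity \eqref{eq:repair-assumptions} against constants gives $\int_X\cF_j=0$. So $\Delta V=\cF/2$ is solvable on the torus, and $V$ is unique up to constants. Schauder theory then gives $\norm{H}_{C^{m}}\le \norm{V}_{C^{m+1,\alpha}}\le C\norm{\cF}_{C^{m-1,\alpha}}\le C_m\norm{\cF}_{C^m}$. The $p$-dependent term is bounded in $C^m$ by $C_m\norm e_{C^m}$, since $p$ ranges over the compact $\sph$. This proves \eqref{eq:metric-Cm}. When $\norm e_{C^0}+\norm\cF_{C^0}$ is small, the block $A$ is positive definite, so $G$ is a smooth positive metric. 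The two blocks are orthogonal, so the energy density is
\[
|dU|_G^2=|Dq|^2+\tr A(q(x),x)=|Dq|^2+3+\tr H(x)+2\,e(x)\cdot q(x).
\]
It is integrable, so $U\in W^{1,2}$.

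\textbf{Step 2: weak harmonicity.} First I work on $X\setminus S$, where everything is smooth.

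For the $\sph$-component, the Euler--Lagrange equation reads: the tangential part of $\Delta q+|Dq|^2q$ equals the tangential part of $\tfrac12\nabla_p\tr A$, which is $e$. Here one uses $e\cdot q=0$, which follows from $|q|=1$ because $q\cdot\Delta q=-|Dq|^2$. So the coefficient $\tfrac23$ is exactly what cancels the tension; the sign convention of the variation fixes the sign in front of $\tfrac23$, and I would adjust it if needed.

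For the $X$-component (the identity map), the Euler--Lagrange equation is
\[
\partial_i\bigl(A_{ik}(q(x),x)\bigr)-\tfrac12(\partial_{y_k}\tr A)(q(x),x)=0 .
\]
Expanding with the chain rule, the $e$-terms give $\tfrac23\partial_k(e\cdot q)-\tfrac13\,\partial_{y_k}e\cdot q=\tfrac23 e\cdot\partial_kq+\tfrac13\,\partial_k e\cdot q$. The $H$-terms give $\partial_iH_{ik}-\tfrac12\partial_k\tr H=\Delta V_k$. I expect that after combining with the spherical equation these reduce precisely to $\Delta V=\cF/2$ with $\cF_k=-2e\cdot\partial_kq$. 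If the constants do not match, I would fix the constants in the definition of $A$ accordingly.

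Next I extend across $S$. Every term in the weak equations is $L^1$: $Dq\in L^2$, $q$ is bounded, and the coefficients are smooth. Since $\capacity_2(S)=0$, there are cutoffs $\eta_k\to1$ in $W^{1,2}$ with $\eta_k=0$ near $S$. Testing against $\eta_k\varphi$ and letting $k\to\infty$ removes $S$, and the error terms vanish by dominated convergence together with $\norm{D\eta_k}_{L^2}\to0$.

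\textbf{Step 3: stationarity, which I expect to be the main obstacle.} Let $T_U=|dU|_G^2I-2\,dU^{T}G\,dU$ be the stress tensor of $U$. It splits as
\[
T_U=T_q+\bigl(\tr A(q,x)\,I-2A(q,x)\bigr).
\]
The second summand involves $q$ only through $e\cdot q$ and is otherwise smooth. Its distributional divergence is computed by the chain rule, which is valid because $q\in W^{1,2}$ and the coefficients are smooth; it produces terms $\partial_iH_{ij}-\tfrac12\partial_j\tr H$-type expressions plus terms linear in $e\cdot\partial_jq$ and $\partial_je\cdot q$. The delicate part is $\Div T_q$. Across $S$ it is not a local computation, and this is exactly why hypothesis \eqref{eq:repair-assumptions} is imposed on all of $X$ rather than only on $X\setminus S$.

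Substituting $\Div T_q=\cF$, the total divergence $\Div T_U$ becomes a globally $L^1$ expression. It vanishes on $X\setminus S$ because there $U$ is smooth and harmonic, and on that set the classical identity $\Div T_U=-2\langle\tau(U),\partial U\rangle$ holds. An $L^1$ function vanishing off a null set vanishes, so $\Div T_U=0$ on all of $X$. The care needed is to confirm that no singular distribution supported on $S$ survives. This holds because, once \eqref{eq:repair-assumptions} is used, every remaining term is a product of a smooth function with $q$ or with $Dq$, and these cannot charge $S$.
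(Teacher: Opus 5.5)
Your construction matches the paper's: the same metric, the same Poisson equation and the same capacity cutoff. The problem is the torus component of Step~2. It is the one computation that checks the constants, and you both miscalculate it and leave it open.

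\emph{The computation.} Since $I$ is the $3\times3$ identity, $\tr\bigl(\tfrac23(e\cdot p)I\bigr)=2\,e\cdot p$. You use this correctly in the sphere equation, where $\tfrac12\nabla_p\tr A=e$. So $-\tfrac12(\partial_{y_k}\tr A)(q,x)$ contributes $-(\partial_k e)\cdot q$, not $-\tfrac13(\partial_k e)\cdot q$. The cancellation also does not come from ``combining with the spherical equation'', which reduces to $e=e$. It comes from the constraint $e\cdot q=0$ on $X\setminus S$, which you derived but did not use here:
\begin{itemize}
\item it kills the term $\tfrac23\partial_k(e\cdot q)$;
\item differentiating it gives $-(\partial_k e)\cdot q=e\cdot\partial_k q$.
\end{itemize}
Together with $\partial_iH_{ik}-\tfrac12\partial_k\tr H=\Delta V_k=\tfrac12\cF_k=-e\cdot\partial_k q$, the torus expression is exactly $\Delta V_k+e\cdot\partial_k q=0$. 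With your coefficient you would get $\Delta V_k+\tfrac13\,e\cdot\partial_k q=-\tfrac23\,e\cdot\partial_k q\neq0$.

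\emph{Why the fallback does not work.} ``Fixing the constants in $A$'' is not a free option. The coefficient $\tfrac23$ is forced by the sphere equation, and the sign $+\tfrac23$ you hedge about is correct. The factor $\tfrac12$ in $\Delta V=\cF/2$ is then forced by the torus equation. The substance of the theorem is that these forced choices are consistent. Your miscount would have led you to the wrong normalization $\Delta V=\cF/6$. Your Step~3 deduces $\Div T_U=0$ from classical harmonicity of $U$ on $X\setminus S$, so it inherits this gap.

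\emph{Step~3 once the torus equation is fixed.} It is then a valid route, slightly different from the paper's.
\begin{itemize}
\item The paper notes that the $p$-linear term has zero pullback, since $e\cdot q=0$ a.e. Hence $T_U=T_q+I+(\tr H)I-2H$, and the algebraic identity $\Div\bigl((\tr H)I-2H\bigr)=-2\Delta V=-\cF$ gives $\Div T_U=0$ without using the Euler equations.
\item You instead insert the hypothesis $\Div T_q=\cF$. The remaining summand $\tr A(q,x)I-2A(q,x)$ lies in $W^{1,2}$, so $\Div T_U$ is an $L^1$ function. It vanishes classically on the open set $X\setminus S$, and $S$ is Lebesgue-null, so it vanishes on $X$. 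This is correct, and it makes stationarity a consequence of harmonicity off $S$ plus the global stress hypothesis.
\end{itemize}

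A minor point: at $m=0$ your Schauder chain through $C^{-1,\alpha}$ should be replaced by, for example, the $L^1$ bound on the gradient of the periodic Green function, as in the paper.
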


\begin{remark}
We point out that the weaker assumption $\cH^1(S)<\infty$ already implies $\capacity_2(S)=0$, which is sufficient for the argument here. The stronger assumption $\cH^1(S)=0$ is only used later in Proposition \ref{prop:stress} where we verify that the constructed map $q$ satisfies the hypotheses of Theorem \ref{prop:repair}.
\end{remark}

\subsection{Weak equations and extension across capacity-zero sets}\label{sec:capacity}
If $q$ is smooth and $e=\Delta q+|Dq|^2q$, then
\begin{equation}\label{eq:stress-identity-smooth}
(\Div T_q)_j=-2e\cdot\partial_jq.
\end{equation}
For a map into a general target the corresponding tensor is
\[
(T_u)_{ij}=|Du|_G^2\delta_{ij}-2G(u)(\partial_i u,\partial_j u).
\]
For a finite-energy weakly harmonic map,
\[
u\text{ is stationary}
\ \Longleftrightarrow\ \Div T_u=0\text{ in }\mathcal D'
\ \Longleftrightarrow\ \int T_u:DY=0\quad\forall\,Y\in C_c^\infty.
\]
The equivalence follows by changing variables in the energy under a smooth domain flow. Tensor $C^m$ norms are taken with respect to the fixed background metrics. Constants may increase between occurrences; uniformity in the bubble scale is stated explicitly in the analytic estimates below.

We record the capacity argument used below.
\begin{lemma}\label{lem:capacity}
Let $\Omega$ be a smooth domain or a smooth Riemannian manifold, and let $S\subset\Omega$ be relatively closed with locally zero Sobolev $2$-capacity. 

\begin{samepage}
\textup{(i)} For a map $v$ defined on $\Omega\setminus S$, denote $\widetilde v$ for an extension with arbitrary bounded values on $S$. If
\[
v\in C^\infty(\Omega\setminus S,\R^m)\cap L^\infty(\Omega\setminus S),
\qquad \int_{\Omega\setminus S}|Dv|^2\dd\mathrm{vol}<\infty,
\]
then
\[
\begin{gathered}
\widetilde v\in W^{1,2}_{\mathrm{loc}}(\Omega,\R^m),\qquad
D\widetilde v=1_{\Omega\setminus S}Dv\quad\text{a.e.},\\
\int_\Omega|D\widetilde v|^2\dd\mathrm{vol}
=\int_{\Omega\setminus S}|Dv|^2\dd\mathrm{vol}.
\end{gathered}
\]
Moreover, $\widetilde v\in W^{1,2}(\Omega)$ if and only if $\widetilde v\in L^2(\Omega).$
In particular, $\operatorname{vol}(\Omega)<\infty$ implies $\widetilde v\in W^{1,2}(\Omega)$.
\end{samepage}

\textup{(ii)} Let $(N,G)$ be smooth and compact. If
\[
u\in W^{1,2}_{\mathrm{loc}}(\Omega\setminus S,N),\qquad
E_G(u;\Omega\setminus S)<\infty,\qquad
\tau_G(u)=0\quad\text{in }\mathcal D'(\Omega\setminus S),
\]
then every $N$-valued extension $\widetilde u$ satisfies
\[
\begin{gathered}
\widetilde u\in W^{1,2}_{\mathrm{loc}}(\Omega,N),\qquad
\tau_G(\widetilde u)=0\quad\text{in }\mathcal D'(\Omega),\\
E_G(\widetilde u;\Omega)=E_G(u;\Omega\setminus S).
\end{gathered}
\]
Here $\tau_G=0$ denotes the full weak target-variation equation.
Moreover, $\operatorname{vol}(\Omega)<\infty $ implies $
\widetilde u\in W^{1,2}(\Omega,N)$. 
No smoothness of $u$ on $\Omega\setminus S$ is assumed.
\end{lemma}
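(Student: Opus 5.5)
The plan is the standard removable-set argument: approximate $1$ by cutoffs $1-\eta_k$ that vanish near $S$ and have small Dirichlet energy, then pass to the limit.

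\textbf{Cutoffs.} Since $\capacity_2(S\cap K)=0$ for every compact $K\subset\Omega$, there are $\eta_k\in C_c^\infty(\Omega)$ with $0\le\eta_k\le1$, $\eta_k=1$ on a neighbourhood of $S\cap K$, $\norm{\eta_k}_{W^{1,2}}\to0$, and, after passing to a subsequence, $\eta_k\to0$ a.e. Capacity zero also forces $\cL^n(S)=0$, so $D\widetilde v$ and $1_{\Omega\setminus S}Dv$ are determined almost everywhere.

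\textbf{Part (i).} Fix $\phi\in C_c^\infty(\Omega)$ and take $K=\supp\phi$. The function $(1-\eta_k)\phi$ is supported away from $S$, where $v$ is smooth, so
\[
\int \widetilde v\,\partial_i\bigl((1-\eta_k)\phi\bigr)=-\int_{\Omega\setminus S}(1-\eta_k)\phi\,\partial_iv .
\]
Expand the left side as $\int\widetilde v(1-\eta_k)\partial_i\phi-\int\widetilde v\,\phi\,\partial_i\eta_k$. The error term satisfies
\[
\Bigl|\int\widetilde v\,\phi\,\partial_i\eta_k\Bigr|\le\norm{\widetilde v}_{L^\infty}\norm\phi_{L^2}\norm{D\eta_k}_{L^2}\to0 .
\]
The remaining terms converge by dominated convergence, using $|Dv|\phi\in L^1$. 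In the limit, $\int\widetilde v\,\partial_i\phi=-\int 1_{\Omega\setminus S}\,\partial_iv\,\phi$. This is the a.e. formula for the weak gradient. It gives $\widetilde v\in W^{1,2}_{\mathrm{loc}}$ and the equality of energies, since $S$ is null. The statement ``$\widetilde v\in W^{1,2}(\Omega)$ if and only if $\widetilde v\in L^2(\Omega)$'' follows because the gradient is globally $L^2$. When $\vol(\Omega)<\infty$, boundedness already gives $\widetilde v\in L^2(\Omega)$.

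\textbf{Part (ii).} Embed $N\subset\R^m$ isometrically and compactly, so $u$ is bounded. The first step mimics (i), but $u$ is only $W^{1,2}_{\mathrm{loc}}(\Omega\setminus S)$, not smooth. Instead of the pointwise identity, I use that $\partial_i u$ is the weak derivative of $u$ on $\Omega\setminus S$ and test against $(1-\eta_k)\phi\in C_c^\infty(\Omega\setminus S)$. The same limit argument then shows $\widetilde u\in W^{1,2}_{\mathrm{loc}}(\Omega)$, with $D\widetilde u=1_{\Omega\setminus S}Du$ a.e. and the same energy. The values of $\widetilde u$ lie in $N$ a.e.

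For the weak equation, I write it as
\[
\int\ip{Du}{D\psi}_G-\Gamma\text{-type terms}=0
\]
for all $\psi\in C_c^\infty$ tangent-valued. Equivalently, in extrinsic form,
\[
\int Du\cdot D\psi=\int A(u)(Du,Du)\cdot\psi \quad\text{for all }\psi\in C_c^\infty(\Omega,\R^m).
\]
This is tested with $\psi=(1-\eta_k)\phi$. The only new term is $\int Du\cdot\phi\,D\eta_k$, which is bounded by
\[
\norm\phi_\infty\norm{Du}_{L^2(\supp\phi)}\norm{D\eta_k}_{L^2}\to0 .
\]
The quadratic right-hand side is in $L^1$ and converges by dominated convergence. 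If the intrinsic formulation is used, with variations $\exp_u(t\xi)$ or $\Gamma$-terms, the same cancellation applies because every term is linear in the test function and at most quadratic in $Du$.

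\textbf{Main obstacle.} The delicate point is the cutoff term $\int Du\cdot\phi\,D\eta_k$. It vanishes only because the energy is finite on $\supp\phi$ and $\norm{D\eta_k}_{L^2}\to0$. It is essential to use capacity, not mere measure zero, to get cutoffs with $D\eta_k\to0$ in $L^2$.
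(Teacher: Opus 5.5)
Your proposal is correct and follows essentially the same route as the paper: cutoffs $1-\eta_k$ vanishing near $S\cap K$ with $\eta_k\to0$ a.e.\ and $\norm{D\eta_k}_{L^2}\to0$, testing the identity on $\Omega\setminus S$ against $(1-\eta_k)\phi$, killing the cutoff term by Cauchy--Schwarz against the local energy, and applying dominated convergence to the remaining terms. The only difference is in the last step. You verify the weak equation in extrinsic form via an isometric (Nash) embedding, whereas the paper uses an arbitrary smooth embedding only for the Sobolev extension and checks the intrinsic variations $\exp_u(t\Xi(x,u))$ directly, which is the alternative you mention at the end.
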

\begin{proof}
We first specify the capacity convention. For every $K\Subset\Omega$, choose an open $W\Subset\Omega$ containing $K$ such that
\[
\inf\left\{
\int_W(|a|^2+|Da|^2)\dd\mathrm{vol}:
 a\in C_c^\infty(W),\quad a\ge1\ \text{near }S\cap K
\right\}=0.
\]
On a compact manifold we may take $W=\Omega$. Choose a smooth truncation $T$ with
\[
T|_{(-\infty,0]}=0,\qquad T|_{[1,\infty)}=1,\qquad
0\le T\le1,\qquad |T(t)|\le C|t|,\quad |T'(t)|\le C.
\]
For admissible $a_k$ with sufficiently small $W^{1,2}$ norm, $\eta_k=T(a_k)$ satisfies
\[
\eta_k\in C_c^\infty(W),\quad
0\le\eta_k\le1,\quad \eta_k=1\ \text{near }S\cap K,\quad
\|\eta_k\|_{L^2}+\|D\eta_k\|_{L^2}\le2^{-k}.
\]
Tonelli's theorem gives
\[
\int_W\sum_k\eta_k^2\dd\mathrm{vol}
=\sum_k\|\eta_k\|_{L^2}^2<\infty
\quad\Longrightarrow\quad \eta_k\longrightarrow0\quad\text{a.e.}
\]
Thus $\zeta_k=1-\eta_k$ satisfies
\[
0\le\zeta_k\le1,\quad \zeta_k=0\ \text{near }S\cap K,\quad
\zeta_k\to1\ \text{a.e.},\quad \|D\zeta_k\|_{L^2}\to0.
\]
Moreover,
\[
\operatorname{vol}(S\cap K)\le\|\eta_k\|_{L^2}^2\to0,
\qquad \operatorname{vol}(S)=0
\]
by exhaustion. The cutoffs depend on the compact support of each test field; no single sequence near all of a noncompact $S$ is required.

We prove \textup{(i)} in Euclidean coordinates. On relatively compact Riemannian charts the coordinate and metric norms are uniformly equivalent, so localization gives the same conclusion. Put $g=1_{\Omega\setminus S}Dv\in L^2(\Omega)$. Given $\psi\in C_c^\infty(\Omega)$, use the cutoffs for $K=\supp\psi$. Since $\zeta_k\psi\in C_c^\infty(\Omega\setminus S)$,
\[
\int_\Omega\widetilde v^\alpha\zeta_k\partial_i\psi
=-\int_\Omega g_i^\alpha\zeta_k\psi
 -\int_\Omega\widetilde v^\alpha\psi\partial_i\zeta_k.
\]
The last term satisfies
\[
\left|\int_\Omega\widetilde v^\alpha\psi\partial_i\zeta_k\right|
\le\|\widetilde v\|_{L^\infty}\|\psi\|_{L^2}
\|D\zeta_k\|_{L^2}\longrightarrow0.
\]
The first two terms are dominated by $\|\widetilde v\|_{L^\infty}|D\psi|$ and $|g||\psi|$, which belong to $L^1$. Hence
\[
\int_\Omega\widetilde v^\alpha\partial_i\psi
=-\int_\Omega g_i^\alpha\psi,
\qquad D\widetilde v=g\quad\text{in }\mathcal D'(\Omega).
\]
Boundedness gives $\widetilde v\in L^2_{\mathrm{loc}}$, and finite volume gives $\widetilde v\in L^2$. This proves \textup{(i)}. The global $L^2$ condition is necessary on an infinite-volume domain: on $\R^n$,
\[
v\equiv1,\qquad Dv=0,\qquad
\int_{\R^n}|Dv|^2=0,\qquad v\notin L^2(\R^n).
\]

The same calculation applies to $v\in W^{1,2}_{\mathrm{loc}}(\Omega\setminus S)\cap L^\infty$ with $Dv\in L^2$: integration by parts is then the weak derivative identity. Fix a smooth embedding $\iota:N\hookrightarrow\R^M$. Compactness gives
\[
\|\iota\circ u\|_{L^\infty}<\infty,\qquad
c|Du|_G\le|D(\iota\circ u)|\le C|Du|_G.
\]
Applying the Sobolev argument to $\iota\circ u$ extends $u$ across $S$. Since $S$ is null,
\[
\widetilde u\in N\quad\text{a.e.},\qquad
E_G(\widetilde u;\Omega)=E_G(u;\Omega\setminus S).
\]
We suppress the tilde in the rest of the proof.

It remains to verify the full weak harmonic equation. Let $\Xi(x,p)\in T_pN$ be smooth, with $\operatorname{pr}_\Omega(\supp\Xi)\subset K\Subset\Omega$. In Euclidean coordinates set
\[
A_\Xi(x)=\sum_i
\ip{\partial_i u}{\partial_{x_i}\Xi(x,u)
 +\nabla^{N,p}_{\partial_i u}\Xi(x,u)}_G.
\]
The derivative $\partial_{x_i}$ holds $p$ fixed; $\nabla^{N,p}$ differentiates in $N$. On a Riemannian domain take the corresponding trace in a local orthonormal frame. For
\[
u_t(x)=\exp_{u(x)}\bigl(t\Xi(x,u(x))\bigr)
\]
we have
\[
|A_\Xi|\le C_\Xi(|Du|_G+|Du|_G^2)1_K\in L^1(\Omega).
\]
Smoothness of $(x,p,t)\mapsto\exp_p(t\Xi(x,p))$ and compactness of $K\times N$ give, for $0<|t|<t_0$,
\[
\left|\frac{|Du_t|_G^2-|Du|_G^2}{t}\right|
\le C_\Xi(1+|Du|_G^2)1_K\in L^1(\Omega).
\]
Differentiation under the integral is therefore justified, and
\[
\left.\frac{d}{dt}\right|_{t=0}E_G(u_t;\Omega)
=2\int_\Omega A_\Xi\dd\mathrm{vol}.
\]
Use the cutoffs associated with $K$ and put $\Xi_k(x,p)=\zeta_k(x)\Xi(x,p)$. Its domain support avoids $S$. Weak harmonicity on $\Omega\setminus S$ and the product rule give
\[
0=\int_\Omega\zeta_k A_\Xi\dd\mathrm{vol}
+\sum_i\int_\Omega(\partial_i\zeta_k)
\ip{\partial_i u}{\Xi(x,u)}_G\dd\mathrm{vol}.
\]
The second term tends to zero by
\[
\left|\sum_i\int_\Omega(\partial_i\zeta_k)
\ip{\partial_i u}{\Xi(x,u)}_G\dd\mathrm{vol}\right|
\le\sup_{K\times N}|\Xi|_G\,
\|Du\|_{L^2(K)}\|D\zeta_k\|_{L^2(K)}\to0.
\]
Dominated convergence in the first term yields
\[
\int_\Omega A_\Xi\dd\mathrm{vol}=0\quad\text{for every such }\Xi,
\qquad \tau_G(u)=0\quad\text{in }\mathcal D'(\Omega).
\]
This proves \textup{(ii)}.
\end{proof}

In dimension three, $\cH^1(S)<\infty$ implies locally zero Sobolev $2$-capacity.  Compactness then gives zero Sobolev $2$-capacity for $S_E$ on $X$.

\subsection{Proof of Theorem~\ref{prop:repair}}
The Poisson equation supplies a tensor that cancels the torus Euler expression and the stress divergence. A term linear in the sphere coordinate corrects the spherical equation and has zero pullback along the graph.

\begin{proof}
The distributional divergence in \eqref{eq:repair-assumptions} has zero mean. Solve on $X$
\begin{equation}\label{eq:poisson-H}
\Delta V=\cF/2,\qquad \int_XV=0,\qquad
H_{ij}=\partial_iV_j+\partial_jV_i.
\end{equation}
The first derivative of the periodic Green function is integrable in dimension three. Commuting derivatives with convolution therefore gives
\begin{equation}\label{eq:H-Cm}
\norm H_{C^m}\le C_m\norm\cF_{C^m}.
\end{equation}
The elementary identities
\begin{align}
\Div H-\tfrac12d\tr H&=\Delta V,\label{eq:H-Euler}\\
\Div\bigl((\tr H)I-2H\bigr)&=-2\Delta V=-\cF\label{eq:H-stress}
\end{align}
will cancel the two residuals.

For target coordinates $(p,y)\in\sph\times X$, put
\begin{equation}\label{eq:metric}
G=g_{\sph}+K_{ij}(p,y)\dd y^i\dd y^j,
\qquad
K(p,y)=I+H(y)+\frac23(e(y)\cdot p)I.
\end{equation}
This is a globally defined smooth tensor. It is positive if
\[
\norm H_{C^0}+\tfrac23\norm e_{C^0}<\tfrac12.
\]
Estimate \eqref{eq:metric-Cm} follows from \eqref{eq:H-Cm}.

We verify the full Euler equations on $X\setminus S$. Since $|q|=1$, the classical tension satisfies $e\cdot q=0$. Along $U=(q,\mathrm{id})$, the sphere gradient of $\tr K$ is $2e$. The sphere Euler equation is thus
\[
\tau_{\sph}q-\tfrac12\nabla_{\sph}\tr K=e-e=0.
\]
For the $j$th torus coordinate the lowered Euler expression is
\begin{equation}\label{eq:marker-Euler}
\partial_i[K_{ij}(q(x),x)]
-\tfrac12(\partial_{y^j}\tr K)(q(x),x).
\end{equation}
The derivatives in \eqref{eq:marker-Euler} are distinguished by
\[
\partial_i[K_{ij}(q(x),x)]
=(D_pK_{ij})(q(x),x)[\partial_iq]
 +(\partial_{y^i}K_{ij})(q(x),x),
\]
whereas $\partial_{y^j}\tr K$ holds $p$ fixed. Using $e(x)\cdot q(x)=0$, expression \eqref{eq:marker-Euler} equals
\[
\partial_iH_{ij}-\tfrac12\partial_j\tr H
-(\partial_je)\cdot q
=\Delta V_j+e\cdot\partial_jq=0.
\]
These are lowered equations for all target coordinates, so positivity of $G$ makes them equivalent to the full harmonic-map equation. Lemma~\ref{lem:capacity} extends weak harmonicity across $S$.

The last scalar term in \eqref{eq:metric} has zero pullback along $U$. Consequently
\begin{equation}\label{eq:graph-stress}
T_U=T_q+I+(\tr H)I-2H.
\end{equation}
Equations \eqref{eq:repair-assumptions} and \eqref{eq:H-stress} give $\Div T_U=0$ on all of $X$. This proves stationarity under arbitrary domain variations.
\end{proof}

\begin{remark}
The identity \eqref{eq:repair-assumptions} is required on all of $X$. In fact,
\[
\Div T_q=\cF+\mathcal R,\quad\supp\mathcal R\subset S
\quad\Longrightarrow\quad \Div T_U=\mathcal R.
\]
Thus a smooth extension of the classical divergence off $S$ is insufficient unless $\mathcal R=0$. Subsection~\ref{sec:stress} establishes the required identity directly.
\end{remark}

\section{Construction of the spherical map}\label{sec:construction}
In this section, we construct the sphere-valued map required by Theorem~\ref{prop:repair}. First we continue a varying spherical bubble analytically; then we cut it off at a much larger radius. This section gives the map and its classical properties off the prescribed set. Its Sobolev and distributional properties are verified in Section~\ref{sec:verification}. We use the quotient map $p:\R^2\to\T^2$ and the disk $D=B_1^{\R^2}(0)$; since $L=10$, $p|_D$ is injective. The main result in this section is the following

\begin{proposition}\label{prop:construction}
Let $E\subset\T$ be a nonempty compact null set. There are constants $c_h>0$, $0<\kappa\le1/4$, and $0<\delta_0<e^{-1}$ for which the following construction is defined when $0<\delta<\delta_0$. On each lifted gap $(a,b)$ of $\T\setminus E$, put
\[
A(z)=\frac1{(z-a)(b-z)},\quad
\la_\delta(z)=\delta e^{-A(z)},\quad
\rho(z)=\frac{\kappa}{1+|\log\delta|}(1+A(z))^{-4}.
\]
For $0<t\le2$, define
\[
\begin{aligned}
\mathcal C_\delta&=\{(r,z):z\in\T\setminus E,\ 0\le r<2\rho(z)\},\\
\Omega_\delta(t)&=\{(p(\xi),z):\xi\in D,\ z\in\T\setminus E,\ |\xi|<t\rho(z)\}.
\end{aligned}
\]
Then there is a canonical real-analytic solution $f_\delta$ of \eqref{eq:axial} on $\mathcal C_\delta$, with
\[
f_\delta(0,z)=0,\qquad \partial_r f_\delta(0,z)=2/\la_\delta(z)
\quad(z\in\T\setminus E).
\]
Fix $\chi\in C^\infty([0,\infty))$ with $0\le\chi\le1$, $\chi=1$ on $[0,1/2]$, and $\chi=0$ on $[3/4,\infty)$.

For $(p(\xi),z)\in\Omega_\delta(2)$ with $\xi=r(\cos\theta,\sin\theta)\ne0$, define
\begin{equation}\label{eq:q-construction}
\begin{split}
\widetilde f_\delta(r,z)&=\pi+\chi(r/\rho(z))(f_\delta(r,z)-\pi),\\
q_\delta(p(\xi),z)&=(\sin\widetilde f_\delta\cos\theta,
\sin\widetilde f_\delta\sin\theta,\cos\widetilde f_\delta).
\end{split}
\end{equation}
Let $\Np=(0,0,1)$, $\Sp=(0,0,-1)$, and choose any $v_E:S_E\to\sph$.

Extend $q_\delta$ outside  $\Omega_\delta(2)$ by
\[
q_\delta(x)=
\begin{cases}
\Np,&x=(0,0,z),\quad z\in\T\setminus E,\\
\Sp,&x\in X\setminus\bigl(\Omega_\delta(2)\cup S_E\bigr),\\
v_E(x),&x\in S_E.
\end{cases}
\]
Then
\[
\begin{gathered}
q_\delta\in C^\infty(X\setminus S_E,\sph),\qquad
\tau_{\sph}(q_\delta)=0\quad\text{on }\Omega_\delta(1/2),\\
q_\delta=\Sp\quad\text{on }X\setminus
\bigl(\Omega_\delta(3/4)\cup S_E\bigr),
\end{gathered}
\]
and
\[
\overline{\{x\in X:q_\delta(x)\ne\Sp\}}
\subset p\bigl(\overline B_{3\kappa/8}^{\R^2}(0)\bigr)\times\T
\Subset p(D)\times\T.
\]
The estimates of Theorem~\ref{thm:tube} hold uniformly over the gaps, and
$\sup_{z\notin E}\la_\delta(z)/\rho(z)\to0$ as $\delta\downarrow0$.
\end{proposition}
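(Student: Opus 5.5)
The proof has two parts: producing the analytic solution $f_\delta$ on $\mathcal C_\delta$, and checking the elementary properties of the glued map $q_\delta$. For the first part I rely on Theorem~\ref{tube} (the tube/continuation theorem referenced in the statement), which I take to give the following. For a real-analytic axial scale $\la$ satisfying weighted Cauchy bounds on its $z$-derivatives, the equivariant equation \eqref{eq:axial} has a canonical solution with data $f(0,z)=0$, $\partial_rf(0,z)=2/\la(z)$. This solution exists on $r<2\rho(z)$ whenever $\rho$ satisfies an admissibility inequality relative to those Cauchy bounds and to $\la/\rho$ being small.

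The first task is to check these hypotheses gap by gap, uniformly. On a gap $(a,b)$, the function $A(z)=1/((z-a)(b-z))$ is holomorphic in a complex neighbourhood of each $z$ of radius comparable to $\min(z-a,b-z)^2$, which is $\gtrsim A(z)^{-2}$. On this disk $\mathrm{Re}\,A$ changes only by a bounded factor. Cauchy estimates then give $|\partial_z^k\log\la_\delta|\le C^k k!\,(1+A)^{2k}$, and the same bound holds for $\rho$, with constants independent of the gap since gaps have length at most $L$. The weight $(1+A)^{-4}$ in $\rho$, together with the factor $\kappa/(1+|\log\delta|)$, is chosen to absorb the loss from two axial derivatives against the quadratic Volterra gain. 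I would fix $\kappa$ small so that the admissibility inequality of Theorem~\ref{thm:tube} holds. This is the main obstacle; in the outline it is handled by the weighted-Cauchy balance inside Theorem~\ref{thm:tube}. Here one only needs to match the weights, and I would verify this by comparing radii of convergence $\sim(1+A)^{-2}$ against $\rho$.

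Canonicity means the coefficients are determined by the axis data through the recursion. Consequently solutions built on overlapping axial disks agree, and $f_\delta$ is globally defined and real analytic on $\mathcal C_\delta$. The ratio satisfies
\[
\frac{\la_\delta}{\rho}=\kappa^{-1}\,\delta\,(1+|\log\delta|)\,e^{-A}(1+A)^4\le C\kappa^{-1}\delta(1+|\log\delta|),
\]
which tends to $0$ uniformly. This fixes $\delta_0$.

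Next I check the glued map. On $\Omega_\delta(1/2)$ we have $\chi=1$, so $\widetilde f_\delta=f_\delta$, and equivariance reduces $\tau_{\sph}(q_\delta)=0$ to \eqref{eq:axial}. On $r\ge\tfrac34\rho$ we have $\chi=0$, so $\widetilde f_\delta=\pi$ and $q_\delta=\Sp$, which matches the extension outside $\Omega_\delta(2)$.

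Smoothness on $X\setminus S_E$ needs three checks.
\begin{enumerate}
\item Near the axis point $(0,0,z)$ with $z\notin E$, oddness of $f_\delta$ in $r$ is inherited from the analytic recursion (only odd powers appear), and $f_\delta(0,z)=0$. Hence $(\sin f\cos\theta,\sin f\sin\theta,\cos f)$ is smooth in $\xi$ and equals $\Np$ on the axis.
\item Across $r=\rho$-level sets, smoothness follows from smoothness of $\chi(r/\rho(z))$, since $\rho$ is smooth on each gap.
\item Near a point $x\notin\overline{\Omega_\delta(3/4)}$, or a point on its boundary away from $S_E$, the map is locally constant $\Sp$. The set $\overline{\Omega_\delta(3/4)}$ accumulates only at $S_E$, because $\rho\to0$ at gap endpoints.
\end{enumerate}

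Finally, $\rho\le\kappa/(1+|\log\delta|)\le\kappa/2$ since $\delta<e^{-1}$. Therefore $\{q_\delta\ne\Sp\}\subset\{|\xi|<\tfrac34\rho\}\subset\{|\xi|\le 3\kappa/8\}$, and $3\kappa/8<1$ gives compact containment in $p(D)\times\T$.
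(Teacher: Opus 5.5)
Your overall route matches the paper's: verify the hypotheses of Theorem~\ref{thm:tube} on axial disks of radius $\sim(1+A)^{-2}$, glue by canonical coefficients, bound $\la_\delta/\rho$, then check the cutoff. Your $\la_\delta/\rho$ bound and the axis-smoothness, cutoff and support checks are fine. The gap is in the step you yourself call the main obstacle, which you defer and misdescribe.

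Theorem~\ref{thm:tube} consumes exactly two hypotheses, not weighted Cauchy bounds on $\partial_z^k\log\la_\delta$:
\begin{enumerate}
\item the comparison \eqref{eq:lambda-comparison}, $|\la(\zeta)/\ep-1|\le1/20$ on the whole complex disk $D_h(z_0)$;
\item the smallness \eqref{eq:tube-smallness}, $R^2\ell_\ep(R)/h^2<\eta$.
\end{enumerate}
For (i), knowing that $\operatorname{Re}A$ changes by a bounded factor is useless. Then $\la_\delta=\delta e^{-A}$ may change by a factor like $e^{-cA(z_0)}$. What is needed is the additive bound $|A(\zeta)-A(z_0)|\le\frac12\log(21/20)$, imaginary part included. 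The paper gets this on the disk of radius $h(z_0)=c_h(1+A(z_0))^{-2}$, with $c_h=\log(21/20)/(6L)$, from $|A'|<3LA(z_0)^2$ there. On disks of radius $\min(z-a,b-z)^2$ it fails uniformly. Near the ends of a gap of length $s$, the additive variation of $A$ over such a disk is of order $1/s$, which is unbounded over the short gaps of a Cantor set.

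For (ii), the hypothesis contains the logarithmic resonance factor $\ell_\ep(R)=1+\log(1+R/\ep)$. At $R=4\rho$, $\ep=\la_\delta$ it has size $A+|\log\delta|$, unbounded both as $z\to E$ and as $\delta\downarrow0$. ``Comparing radii'' only controls $\rho^2/h^2=(\kappa/c_h)^2B^{-2}(1+A)^{-4}$ and never sees this factor. The missing estimate uses $4\rho<1$ and $\la_\delta<1$ to get $\ell_{\la}(4\rho)\le1+\log2-\log\la_\delta\le2(A+B)$, with $B=1+|\log\delta|$. Hence
\[
\frac{(4\rho)^2\ell_\la(4\rho)}{h^2}\le\frac{32\kappa^2}{c_h^2}\,\frac{A+B}{B^2(1+A)^4}\le\frac{16\kappa^2}{c_h^2},
\]
so $\kappa\le c_h\sqrt\eta/16$ works uniformly in $\delta$ and the gap.

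This is what the factor $1/(1+|\log\delta|)$ in $\rho$ is for. It is not for the balance between axial derivatives and the Volterra gain, which is internal to Theorem~\ref{thm:tube}. Without this factor the left side grows like $|\log\delta|$, and no fixed $\kappa$ suffices.

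Correspondingly, smallness of $\la_\delta/\rho$ is not an existence hypothesis; it only makes $\ell_\ep(R)$ larger. It is needed only so that $r\ge\ep$ on the cutoff annuli, where \eqref{eq:tail-bounds} is applied.

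Finally, the theorem yields only rectangles $|z-z_0|<h(z_0)/4$. You need $h(z_0)<\frac14\dist(z_0,E)$ to keep each rectangle inside a single gap before canonical agreement defines $f_\delta$ on all of $\mathcal C_\delta$.
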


We prove the proposition in two steps. Theorem~\ref{thm:tube} gives estimates uniform in the bubble scale. The flat scale on each complementary interval then permits the local solutions to be assembled and cut off.

\subsection{The equation and the radial inverse}\label{sec:analytic}
For an equivariant sphere map
\[
q_f(r,\theta,z)=(\sin f(r,z)\cos\theta,\sin f(r,z)\sin\theta,\cos f(r,z)),
\]
the harmonic-map equation is
\begin{equation}\label{eq:axial}
f_{rr}+r^{-1}f_r+f_{zz}-\frac{\sin(2f)}{2r^2}=0.
\end{equation}
Let $D_h(z_0)\subset\C$ be a disk with real center. Suppose $\la$ is holomorphic there, real and positive on its real diameter, and
\begin{equation}\label{eq:lambda-comparison}
\ep=\la(z_0)>0,\qquad |\la(z)/\ep-1|\le1/20.
\end{equation}
Define
\begin{equation}\label{eq:bubble-data}
F(r,z)=2\arctan\frac r{\la(z)},\qquad
Z_\la(r,z)=\frac{2\la(z)r}{r^2+\la(z)^2},\qquad
\ell_\ep(r)=1+\log(1+r/\ep).
\end{equation}
The real majorants $Z_\ep$ and $\ell_\ep$ are evaluated only at nonnegative real arguments. For complex radial variables use the domains
\[
\begin{split}
\mathcal D_\ep
&=\{r\in\C:0<|r|,\ |\arg r|<\pi/32\}
  \cup D_{\ep/4}(0),\\
\mathcal D_{\ep,R}&=\mathcal D_\ep\cap D_R(0),\qquad R>0.
\end{split}
\]
These domains contain the axis and avoid $r=\pm i\la(z)$ under \eqref{eq:lambda-comparison}. The branch of $F$ is fixed by $F(0,z)=0$ and continuation along radial segments; equivalently,
\[
F(r,z)=\int_0^r\frac{2\la(z)}{t^2+\la(z)^2}\dd t.
\]
This defines a jointly holomorphic function on $\mathcal D_\ep\times D_h(z_0)$.

\begin{lemma}\label{lem:radial}
Set
\[
\cL_\la=\partial_{rr}+r^{-1}\partial_r-\frac{\cos(2F)}{r^2},
\qquad m_\la=rZ_\la^2.
\]
Define the radial integral operator
\begin{equation}\label{eq:Volterra}
(\cR_\la H)(r,z)=Z_\la(r,z)
\int_0^r\frac{ds}{m_\la(s,z)}
\int_0^s m_\la(t,z)\frac{H(t,z)}{Z_\la(t,z)}\dd t.
\end{equation}
The following estimates apply whenever the defining integrals exist. There is a universal constant $C$ such that, if $|H(t,z)|\le Z_\ep(t)$ for $0<t<r$, then
\begin{equation}\label{eq:first-resonance}
|\cR_\la H(r,z)|\le C Z_\ep(r)r^2\ell_\ep(r).
\end{equation}
For integers $n\ge2$, $q\ge0$, if
$|H(t,z)|\le Z_\ep(t)t^{2n-2}\ell_\ep(t)^q$ for $0<t<r$, then
\begin{equation}\label{eq:radial-n}
|\cR_\la H(r,z)|
\le\frac{C}{n(n-1)}Z_\ep(r)r^{2n}\ell_\ep(r)^q.
\end{equation}

More generally, let $w:D_h(z_0)\to[0,\infty)$ be any finite-valued function and put
\[
\|a\|_w=\sup_{z\in D_h(z_0)}w(z)|a(z)|.
\]
This quantity may be infinite, and no regularity or strict positivity of $w$ is assumed. For every $M\ge0$, the implication
\[
\|H(t,\cdot)\|_w\le MZ_\ep(t)\quad(0<t<r)
\quad\Longrightarrow\quad
\|\cR_\la H(r,\cdot)\|_w
\le CMZ_\ep(r)r^2\ell_\ep(r)
\]
holds. \begin{samepage}
Likewise,
\[
\|H(t,\cdot)\|_w
\le MZ_\ep(t)t^{2n-2}\ell_\ep(t)^q\quad(0<t<r)
\]
implies
\[
\|\cR_\la H(r,\cdot)\|_w
\le\frac{CM}{n(n-1)}Z_\ep(r)r^{2n}\ell_\ep(r)^q.
\]
\end{samepage}
The constant is independent of $w,M,n,q$. The weight is held fixed throughout each radial integration. All these estimates also hold for $r\in\mathcal D_\ep$, with the majorants evaluated at the moduli of the radial variables and the integrals taken along the segment from $0$ to $r$.

For the holomorphic continuation assertion, suppose in addition that $H$ is jointly holomorphic on $\mathcal D_{\ep,R}\times D_h(z_0)$ and has an odd holomorphic germ of order at least $r$ along the axis. Precisely, near every $(0,z_*)$ it has a representation
\[
H(r,z)=r\widehat H(r^2,z)
       =\sum_{j=0}^{\infty}h_j(z)r^{2j+1},
\]
where $\widehat H$ and the coefficients $h_j$ are holomorphic and the series converges normally locally. The coefficient $h_0$ is unrestricted and may vanish. Then $\cR_\la H$ is jointly holomorphic on the same domain, is odd near the axis, and satisfies
\[
(\cR_\la H)(0,z)=\partial_r(\cR_\la H)(0,z)=0,
\qquad \cL_\la\cR_\la H=H.
\]
The last identity at the axis is understood by removable continuation. The holomorphic-germ assumption applies to this continuation assertion; it is not an additional hypothesis for the preceding scalar estimates.
\end{lemma}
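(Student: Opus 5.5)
The plan rests on the fact that $Z_\la=\sin F=\partial_r F\cdot r$ spans the kernel of $\cL_\la$. Indeed, $Z_\la=r\partial_rF$ is the generator of dilations of the bubble, and the bubble solves the $z$-frozen equation. Reduction of order then says that \eqref{eq:Volterra} is the variation-of-parameters inverse. Writing $u=Z_\la w$, we have $\cL_\la(Z_\la w)=Z_\la^{-1}r^{-1}\partial_r(rZ_\la^2\partial_r w)$, because $\cL_\la Z_\la=0$. So $\cL_\la u=H$ becomes $\partial_r(m_\la\partial_rw)=m_\la H/Z_\la$, and the two nested integrals are exactly this inversion. I would check the identity $\cL_\la Z_\la=0$ by direct computation from $F=2\arctan(r/\la)$. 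The condition $\cos 2F$ versus $\sin 2F/(2r^2)$ linearization is immediate.

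For the scalar estimates I would first record the comparisons, for $t$ real (or $|t|$ in $\mathcal D_\ep$) and $|\la/\ep-1|\le1/20$:
\[
|Z_\la(t,z)|\asymp Z_\ep(|t|),\qquad Z_\ep(t)\asymp \min(t/\ep,\ep/t).
\]
On the sector $|\arg r|<\pi/32$ and on $D_{\ep/4}$, the quantity $r^2+\la^2$ stays away from zero, with $|r^2+\la^2|\asymp |r|^2+\ep^2$. This gives the comparisons with universal constants. Inserting the hypothesis $|H|\le Z_\ep t^{2n-2}\ell^q$, the inner integrand is bounded by $C\,t\,Z_\ep(t)^2 t^{2n-2}\ell_\ep(t)^q$. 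I would split the computation into the regimes $s\le\ep$ and $s\ge\ep$.

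For $s\le\ep$, $Z_\ep\sim t/\ep$. The inner integral is $\lesssim s^{2n+2}\ep^{-2}/(2n+2)$, the outer integrand is $\lesssim s^{2n-1}/(2n+2)$, and the outer integral is $\lesssim r^{2n}/(2n(2n+2))$.

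For $s\ge\ep$, $Z_\ep\sim\ep/t$. The inner integral grows like $\ep^2 s^{2n-2}\ell^q/(2n-2)$ for $n\ge2$; I would use monotonicity of $\ell$ to pull $\ell^q$ out. Dividing by $m\sim\ep^2/s$ gives an outer integrand $\sim s^{2n-3}\ell^q/(n-1)$. Integrating gives $r^{2n-2}\ell^q/((n-1)(2n-2))$, and multiplying by $Z_\ep(r)\sim\ep/r$ gives the bound. The resulting factor is $Z_\ep(r)r^{2n}$ in effect: since $\ep r^{2n-3}$ equals $Z_\ep r^{2n-2}\cdot$const, this yields $Z_\ep(r)r^{2n-2}$ times $r^{2}\cdot r^{-2}$. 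I would redo this bookkeeping carefully to land exactly on $Z_\ep(r)r^{2n}/(n(n-1))$.

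The case $n=1$ is the resonance. There the inner integral is $\int^s \ep^2 t^{-1}\,dt$, which produces the logarithm $\ell_\ep$. This gives \eqref{eq:first-resonance}, the $n=1$ case with an extra $\ell$. The weighted versions follow verbatim: multiply by $w(z)$ at a fixed $z$, since all integrals are in $r$ only, and take the supremum. The complex-$r$ version follows by parametrizing the segment $t=\sigma r/|r|$ and using the modulus comparisons. The main obstacle I expect is keeping the constant uniform with the exact $1/(n(n-1))$ denominator across the transition region $s\sim\ep$ together with the $\ell^q$ factor. I would handle this by comparing with explicit antiderivatives, for instance using $\frac{d}{ds}(s^{k}\ell^q)\ge k s^{k-1}\ell^q$.

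For the holomorphic assertion, I would first expand near the axis. Here $Z_\la=r\,\widehat Z(r^2,z)$ with $\widehat Z(0,z)=2/\la\ne0$, and $m_\la=r^3\widehat m$. With $H=r\widehat H$, the ratio $H/Z_\la$ is even and holomorphic. The inner integral is then $r^4\times$(even holomorphic), so dividing by $m_\la$ gives $r\times$(even) and the outer integral gives $r^2\times$(even). Multiplying by $Z_\la$ yields an odd function that is $O(r^3)$, which gives both vanishing conditions. Joint holomorphy holds because the integrands are holomorphic in $(t,z)$ on the star-shaped domain $\mathcal D_{\ep,R}$ (star-shaped about $0$) and $Z_\la,m_\la$ are nonvanishing there away from $r=0$. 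Finally, $\cL_\la\cR_\la H=H$ holds by the reduction-of-order computation off the axis, and extends to the axis by removable continuation.
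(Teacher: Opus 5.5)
Your proposal is correct in architecture and matches the paper on most points: reduction of order with $Z_\la=r\partial_rF$ in the kernel of $\cL_\la$, the comparisons $|Z_\la|\asymp Z_\ep(|r|)$ and $|r^2+\la^2|\asymp|r|^2+\ep^2$, the $n=1$ resonance obtained by splitting at $s\sim\ep$, the pointwise-in-$z$ argument for arbitrary weights, segment parametrization for complex $r$, and the $r\,\widehat H(r^2,z)$ germ bookkeeping at the axis.

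\textbf{Different route for $n\ge2$.} The paper does not split into regimes. It exchanges the order of integration and bounds the resulting kernel
$P_\ep(t;r)=m_\ep(t)\int_t^r \dd s/m_\ep(s)$ by $(r^2-t^2)/(2t)$. This uses the monotone ratio $m_\ep(t)/m_\ep(s)\le s/t$ for $t\le s$, which reduces to $t^2\ep^2\le s^2\ep^2$. Then the single identity
$\int_0^r\frac{r^2-t^2}{2t}t^{2n-2}\dd t=\frac{r^{2n}}{4n(n-1)}$
gives the exact denominator. This needs no case analysis and no transition-region bookkeeping, and it shows why $n=1$ is resonant: the kernel bound is not integrable against $t^0$ at $t=0$.

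Your splitting also works, and the uniformity you worried about is automatic. Since $Z_\ep\asymp\min(t/\ep,\ep/t)$ within a factor $2$, every comparison constant is absolute. The $[0,\ep]$ pieces, of size $\ep^{2n}/(2n+2)$ in the inner integral and $\ep^{2n}/(2n(2n+2))$ in the outer one, are dominated by the $[\ep,r]$ pieces with the same $n$-dependence. Monotonicity of $\ell_\ep$ lets you pull out $\ell_\ep(r)^q$ throughout; the antiderivative trick is unnecessary. The price is only more case analysis.

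\textbf{Arithmetic slip in the $s\ge\ep$ regime.} You should fix this rather than adjust it at the end. Since $m_\ep(s)\asymp\ep^2/s$, dividing the inner bound $C\ep^2 s^{2n-2}\ell_\ep(s)^q/(n-1)$ by $m_\ep(s)$ gives an outer integrand $Cs^{2n-1}\ell_\ep(s)^q/(n-1)$, not $s^{2n-3}$. Integrating gives $Cr^{2n}\ell_\ep(r)^q/(n(n-1))$. Multiplying by $|Z_\la(r,z)|\le CZ_\ep(r)$ lands directly on $\frac{C}{n(n-1)}Z_\ep(r)r^{2n}\ell_\ep(r)^q$, with no ``$r^2\cdot r^{-2}$'' step.

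The power $Z_\ep(r)r^{2n-2}$ that your written bookkeeping produces is off by $r^2$. That gain of $r^2$ per inversion is exactly what the coefficient majorant $Z_\ep(s)s^{2n}\ell_\ep(s)^n$ in the tube theorem relies on, so the correction matters.
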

\begin{proof}
Direct differentiation gives $\cL_\la Z_\la=0$. Writing $\phi=Z_\la v$ transforms $\cL_\la\phi=H$ into
\[
(m_\la v_r)_r=m_\la H/Z_\la.
\]
The two integrations in \eqref{eq:Volterra} therefore give the differential identity away from the axis. The normalization and removability at the axis are verified below.

For real $\ep$ and $0<t\le s$,
\begin{equation}\label{eq:m-ratio}
\frac{m_\ep(t)}{m_\ep(s)}
=\frac{t^3(s^2+\ep^2)^2}{s^3(t^2+\ep^2)^2}\le\frac st.
\end{equation}
After exchanging the integrals in \eqref{eq:Volterra}, the scalar kernel is
\begin{align}
P_\ep(t;r)
&=m_\ep(t)\int_t^r\frac{ds}{m_\ep(s)}\notag\\
&=\frac{t^3}{(t^2+\ep^2)^2}
\left[\frac{r^2-t^2}{2}+2\ep^2\log\frac rt
+\frac{\ep^4}{2}(t^{-2}-r^{-2})\right].\label{eq:kernel}
\end{align}
In particular,
\begin{equation}\label{eq:P-upper}
0\le P_\ep(t;r)\le\frac{r^2-t^2}{2t}.
\end{equation}
Under \eqref{eq:lambda-comparison}, uniformly in $z$ and $0<t\le s$, we have
\[
|Z_\la(t,z)|\asymp Z_\ep(t),\qquad
\frac{|m_\la(t,z)|}{|m_\la(s,z)|}
\le C\frac{m_\ep(t)}{m_\ep(s)}.
\]
Consequently, for each fixed $z$,
\[
|\cR_\la H(r,z)|
\le CZ_\ep(r)\int_0^r
P_\ep(t;r)\frac{|H(t,z)|}{Z_\ep(t)}\dd t.
\]
This comparison includes both the exterior $Z_\la$ factor and the interior $1/Z_\la$ factor in \eqref{eq:Volterra}.

For the first inverse step it remains to bound
\[
J_\ep(r)=\int_0^r\frac{ds}{m_\ep(s)}\int_0^s m_\ep(t)\dd t.
\]
If $s\le\ep$, then $m_\ep(t)/m_\ep(s)\le C(t/s)^3$ for $0<t<s$, and hence
\[
\frac{1}{m_\ep(s)}\int_0^s m_\ep(t)\dd t\le Cs.
\]
If $s>\ep$, the estimates
\[
\int_0^\ep m_\ep(t)\dd t\le C\ep^2,
\qquad
m_\ep(t)\le C\ep^2/t\quad(t\ge\ep),
\qquad
m_\ep(s)^{-1}\le Cs/\ep^2
\]
give
\[
\frac{1}{m_\ep(s)}\int_0^s m_\ep(t)\dd t
\le Cs[1+\log(s/\ep)].
\]
Integrating these inequalities yields $J_\ep(r)\le Cr^2\ell_\ep(r)$, which proves \eqref{eq:first-resonance}. For $n\ge2$, use \eqref{eq:P-upper}, monotonicity of $\ell_\ep$, and the identity
\[
\int_0^r\frac{r^2-t^2}{2t}t^{2n-2}\dd t
=\frac{r^{2n}}{4n(n-1)}
\]
to prove \eqref{eq:radial-n}.

For a weight $w$ as in the statement, the same comparison at each fixed $z$ gives
\[
w(z)|\cR_\la H(r,z)|
\le CZ_\ep(r)\int_0^r
P_\ep(t;r)\frac{w(z)|H(t,z)|}{Z_\ep(t)}\dd t.
\]
Insert either weighted hypothesis into this pointwise inequality, use the corresponding positive-kernel bound just proved, and take the supremum in $z$. This proves the weighted assertions without differentiating or dividing by $w$, and in particular also at its zeros.

For complex $r\in\mathcal D_\ep$, uniformly under \eqref{eq:lambda-comparison},
\begin{equation}\label{eq:complex-denominator}
|r^2+\la^2|\asymp |r|^2+\ep^2.
\end{equation}
On $|\arg r|<\pi/32$, the terms $r^2$ and $\la^2$ lie in a fixed cone about the positive real axis with opening angle less than $\pi/4$. This proves \eqref{eq:complex-denominator} on the sector. On the full disk $|r|<\ep/4$, the lower bound follows instead from
\[
|\la|^2-|r|^2\ge\bigl((19/20)^2-1/16\bigr)\ep^2.
\]
Along a radial segment, write $r=se^{i\theta}$. The absolute values of $Z_\la$ are comparable to $Z_\ep(s)$, and the ratios of the absolute values of $m_\la$ are bounded by universal multiples of the corresponding real ratios. Parametrizing both integrals by their real lengths therefore proves every preceding estimate, including the weighted estimates, on $\mathcal D_\ep$.

It remains to verify the holomorphic assertion. Near each $(0,z_*)$, write
\[
Z_\la(r,z)=r\alpha(r^2,z),\qquad
\alpha(s,z)=\frac{2\la(z)}{s+\la(z)^2}.
\]
The function $\alpha$ is holomorphic and nonzero in a sufficiently small neighborhood. If $H(r,z)=r\widehat H(r^2,z)$, then
\[
m_\la=r^3\alpha(r^2,z)^2,
\qquad m_\la H/Z_\la=r^3\alpha(r^2,z)\widehat H(r^2,z).
\]
The inner integral in \eqref{eq:Volterra} consequently has the form $s^4\beta(s^2,z)$ with $\beta$ holomorphic. Dividing by $m_\la(s,z)$ gives
\[
s\frac{\beta(s^2,z)}{\alpha(s^2,z)^2},
\]
whose integral is $r^2\gamma(r^2,z)$ with $\gamma$ holomorphic. Thus
\[
\cR_\la H(r,z)=r^3\alpha(r^2,z)\gamma(r^2,z).
\]
This proves joint holomorphicity, oddness, and the asserted zero axis value and slope. Away from the axis, holomorphic parameter-dependent integration gives joint holomorphicity directly. The differential identity already proved for $r\ne0$ extends across the removable terms at the axis. The same calculation shows that an odd holomorphic source of order at least $r^{2j+1}$ is mapped to an odd holomorphic function of order at least $r^{2j+3}$, for every integer $j\ge0$.
\end{proof}

\subsection{Weighted Cauchy estimates and a nonlinear majorant}
For a holomorphic function on $D_h(z_0)$ set
\[
d(z)=h-|z-z_0|,\qquad
\norm{a}_p=\sup_{z\in D_h(z_0)}d(z)^p|a(z)|,
\qquad p=0,1,2,\ldots.
\]
These are the weights $w_p(z)=d(z)^p$ in Lemma~\ref{lem:radial}. The weighted holomorphic norms satisfy
\begin{equation}\label{eq:Nagumo}
\norm{a''}_{p+2}\le C_D(p+1)(p+2)\norm{a}_p,
\qquad
\norm{ab}_{p+q}\le\norm{a}_p\norm{b}_q,
\end{equation}
where one may take $C_D=4e^2$. Indeed, Cauchy's formula on the circle of radius $d(z)/(p+2)$ gives
\[
d(z)^{p+1}|a'(z)|
\le(p+2)\left(\frac{p+2}{p+1}\right)^p\norm{a}_p
\le2e(p+1)\norm{a}_p.
\]
Applying this inequality twice proves the derivative estimate in \eqref{eq:Nagumo}; its product estimate follows directly from the definition.

\begin{theorem}\label{thm:tube}
There are universal constants $\eta,C>0$ with the following property. Suppose \eqref{eq:lambda-comparison} holds and
\begin{equation}\label{eq:tube-smallness}
R^2\ell_\ep(R)/h^2<\eta.
\end{equation}
Then \eqref{eq:axial} has a real solution $f=F+\phi$ for
$0\le r<R/2$, $|z-z_0|<h/4$, with
\begin{equation}\label{eq:axis-data}
f(0,z)=0,\qquad f_r(0,z)=2/\la(z),
\end{equation}
and
\begin{equation}\label{eq:phi-bounds}
|\phi|+r|\phi_r|+h|\phi_z|
\le C Z_\ep(r)r^2\ell_\ep(r)/h^2.
\end{equation}
The associated Cartesian sphere map is smooth across the axis. For all $j,k\ge0$, on the same domain and for $\ep\le r<R/2$,
\begin{equation}\label{eq:tail-bounds}
|\partial_r^j\partial_z^k(\pi-f)|
\le C_{j,k}\ep r^{-j-1}h^{-k}.
\end{equation}
The solutions constructed here have canonical coefficients and agree on overlaps of their real domains whenever they use the same positive real-analytic function $\la$.
\end{theorem}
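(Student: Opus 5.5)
We write $f=F+\phi$ and expand the nonlinearity about the bubble. Since $F$ solves the $z$-independent radial equation, substituting into \eqref{eq:axial} gives
\[
\cL_\la\phi=-F_{zz}-\phi_{zz}+\frac{\sin(2F+2\phi)-\sin(2F)-2\phi\cos(2F)}{2r^2}=:\mathcal N(\phi).
\]
We will solve this by Picard iteration with the Volterra inverse of Lemma~\ref{lem:radial}, setting $\phi=\sum_{n\ge1}\phi_n$. The first term is $\phi_1=-\cR_\la F_{zz}$. Each later term $\phi_{n+1}$ is $\cR_\la$ applied to the part of $\mathcal N$ that is homogeneous of order $n+1$ in the axial-derivative counting: the $-\partial_z^2\phi_n$ contribution plus the polynomial nonlinear terms built from earlier $\phi_j$. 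Lemma~\ref{lem:radial} guarantees that every $\phi_n$ is odd and holomorphic near the axis with $\phi_n(0)=\partial_r\phi_n(0)=0$, so the axis data \eqref{eq:axis-data} are inherited from $F$. Since there are no free homogeneous solutions, the coefficients are canonical. This gives uniqueness on overlaps: on a common real domain both constructions produce the same $r$-power series at the axis, and real-analyticity does the rest.

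The core estimate is an inductive bound in the weighted norms $\norm{\cdot}_p$:
\[
\norm{\phi_n(r,\cdot)}_{2n-2}\le M_n\,Z_\ep(|r|)\,|r|^{2n}\ell_\ep(|r|)^{n}\,h^{-2}\quad\text{(roughly)},
\]
on $\mathcal D_{\ep,R}\times D_h(z_0)$. For the base case, Cauchy estimates give $|F_{zz}|\lesssim Z_\ep h^{-2}$ with weight $d^2$, because $\la'/\la$ is bounded by $h^{-1}$ under \eqref{eq:lambda-comparison}. Then \eqref{eq:first-resonance} yields $\phi_1\lesssim Z_\ep r^2\ell_\ep/h^2$. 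This is the logarithmic resonance.

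For the inductive step, \eqref{eq:Nagumo} costs a factor $C_D(p+1)(p+2)\sim n^2$ when passing from $\norm{\phi_n}_{2n-2}$ to $\norm{\partial_z^2\phi_n}_{2n}$. Estimate \eqref{eq:radial-n} gains exactly $1/(n(n-1))$. The two effects cancel, and we are left with a geometric factor $C r^2\ell_\ep/h^2$ per step. Here the weight $d^{2n}$ is eventually dropped on the smaller disk $|z-z_0|<h/4$, where $d\ge 3h/4$; the loss $(4/3)^{2n}$ is absorbed into the geometric ratio.

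The nonlinear terms are controlled by a majorant series. We bound $\sin$ by its Taylor series, and we use $|\phi|/Z_\ep$ small together with $Z_\ep\le1$ and $r^{-2}Z_\ep^2\sim\ep^{-2}$ or $r^{-2}$ to write the quadratic remainder as $\lesssim\phi^2/r^2\cdot(\text{bounded})$. We then compare with a scalar majorant equation $y=a+b y^2$, whose solution is analytic when $\eta$ is small. Convergence needs \eqref{eq:tube-smallness}, and summing the series gives \eqref{eq:phi-bounds}. The $\phi_r$ bound comes from Cauchy estimates in the complex sector $\mathcal D_\ep$, which we shrink from radius $R$ to $R/2$; the $\phi_z$ bound comes from the weights.

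For smoothness across the axis we use that $f$ is odd in $r$. It follows that $\sin f\,e^{i\theta}$ and $\cos f$ are smooth in Cartesian coordinates $(\xi,z)$. For the tail bounds \eqref{eq:tail-bounds}: $\pi-F=2\arctan(\la/r)\le C\ep/r$, while $\phi\lesssim\ep r\ell_\ep/h^2\cdot$(small). We have to check that the latter is $\le C\ep/r$ when $r^2\ell_\ep\lesssim\eta h^2$, and indeed $\ep r\ell_\ep/h^2\le \ep\eta/r$. Derivatives then follow from Cauchy estimates in the sector, at radial scale $\sim r$ and axial scale $\sim h$.

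\textbf{Main obstacle.} The hard step is making the combinatorics of the $n^2$ Nagumo loss exactly match the $1/(n(n-1))$ radial gain, uniformly, with mixed products of many earlier iterates carrying different $\ell_\ep$ powers and weights. I would try first a generating-function majorant $\Phi(\tau)=\sum M_n\tau^n$ with $\tau=Cr^2\ell_\ep/h^2$. The claim to prove is that $\Phi$ satisfies an analytic ODE or algebraic inequality of Nagumo type, with its radius of convergence independent of $\ep$.
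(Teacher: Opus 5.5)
Your proposal follows essentially the same route as the paper. The paper encodes your axial-derivative grading with an auxiliary parameter $\tau$ placed in front of the $z$-terms, and then uses:
\begin{itemize}
\item the Volterra inverse $\cR_\la$;
\item weighted Cauchy norms in which the Nagumo loss $(2n-1)(2n)$ is cancelled by the radial gain $1/(n(n-1))$;
\item a majorant generating function;
\item Cauchy estimates on polydisks, oddness at the axis, and canonical coefficients for the overlaps.
\end{itemize}

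The one thing to fix is the normalization of your inductive bound. At $n=1$ your ansatz $\norm{\phi_1}_0\lesssim Z_\ep r^2\ell_\ep h^{-2}$ would need an unweighted bound on $F_{zz}$ over all of $D_h(z_0)$, which \eqref{eq:lambda-comparison} does not give. Use instead the paper's normalization $\norm{\phi_n(r,\cdot)}_{2n}\le a_nZ_\ep(s)s^{2n}\ell_\ep(s)^n$, with no factor $h^{-2}$; $h$ enters only when you restrict to a smaller disk.

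With this normalization, your ``main obstacle'' disappears. The product inequality in \eqref{eq:Nagumo} makes the weights and the $\ell_\ep$ powers add exactly in every nonlinear product, and $Z_\ep\le1$ absorbs the extra bubble factors. The majorant $A=Ct+CtA+C(e^{CA}-1-CA)$ then closes the induction. The purely quadratic $y=a+by^2$ should be replaced by it, because it does not dominate the higher Taylor terms of the sine coefficientwise.
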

\begin{proof}
All coefficient estimates below hold for $r\in\mathcal D_{\ep,R}$, with the real majorants evaluated at $s=|r|$. Fix a universal constant $C_R\ge1$ for both radial estimates of Lemma~\ref{lem:radial}, including their weighted and complex versions. Integrating
$\partial_\la F=-2r/(r^2+\la^2)$ along the segment from $\ep$ to $\la(z)$ gives, by \eqref{eq:lambda-comparison} and \eqref{eq:complex-denominator},
\[
\norm{F(r,\cdot)-F_\ep(r)}_0\le C_FZ_\ep(s)
\]
for a universal $C_F\ge1$. The constant-scale function $F_\ep$ is independent of $z$, so \eqref{eq:Nagumo} implies
\begin{equation}\label{eq:source}
\norm{F_{zz}(r,\cdot)}_2\le2C_DC_FZ_\ep(s).
\end{equation}
Write
\[
\cN_F(v)=\frac{\sin(2F+2v)-\sin(2F)-2\cos(2F)v}{2r^2}.
\]
The equation for $\phi$ is $\cL_\la\phi=-F_{zz}-\phi_{zz}+\cN_F(\phi)$. Introduce an auxiliary complex variable $\tau$ only in the axial terms and seek
\begin{equation}\label{eq:tau-equation}
\cL_\la\phi=-\tau F_{zz}-\tau\phi_{zz}+\cN_F(\phi),
\qquad \phi=\sum_{n\ge1}\tau^n\phi_n.
\end{equation}
Equivalently, the equation for $F+\phi$ has coefficient $\tau$ in front of its second $z$ derivative. Define the coefficients recursively by radial integrals:
\begin{equation}\label{eq:coefficient-recursion}
\phi_1=-\cR_\la F_{zz},\qquad
\phi_n=\cR_\la[-\partial_{zz}\phi_{n-1}+N_n],\quad n\ge2.
\end{equation}
The nonlinear source in this formula is explicitly
\[
\begin{split}
b_k(r,z)&=\frac{2^{k-1}}{k!r^2}
\sin\left(2F(r,z)+\frac{k\pi}{2}\right),\qquad k\ge2,\\
N_n(r,z)&=\sum_{k=2}^n b_k(r,z)
\sum_{\substack{i_1+\cdots+i_k=n\\i_j\ge1}}
\prod_{j=1}^k\phi_{i_j}(r,z).
\end{split}
\]
Thus $\cN_F(v)=\sum_{k\ge2}b_kv^k$, and every index in $N_n$ is smaller than $n$.

We first verify inductively that these integrals define holomorphic functions, including at the axis. The source $F_{zz}$ has an odd holomorphic germ of order at least $r$, so Lemma~\ref{lem:radial} gives an odd holomorphic $\phi_1=O(r^3)$. Suppose, for $1\le i<n$, that near the axis
\[
\phi_i(r,z)=r^{2i+1}\widehat\phi_i(r^2,z),\qquad
\widehat\phi_i\text{ is holomorphic}.
\]
Then $\partial_{zz}\phi_{n-1}=r^{2n-1}\partial_{zz}\widehat\phi_{n-1}(r^2,z)$ is an odd holomorphic germ of order at least $r^{2n-1}$. A product with $i_1+\cdots+i_k=n$ in $N_n$ has order at least $r^{2n+k}$. If $k$ is even, $b_k$ is odd with at worst a simple pole, since its numerator is a multiple of $\sin(2F)=O(r)$. The product with $b_k$ is therefore odd and has order at least $r^{2n+k-1}$, which is at least $r^{2n+1}$. If $k\ge3$ is odd, $b_k$ is even with at worst a double pole, and the product is odd of order at least $r^{2n+k-2}$, again at least $r^{2n+1}$. Thus the full source in \eqref{eq:coefficient-recursion} has an odd holomorphic germ of order at least $r^{2n-1}$. Lemma~\ref{lem:radial} proves that $\phi_n$ is jointly holomorphic on $\mathcal D_{\ep,R}\times D_h(z_0)$ and has an odd germ of order at least $r^{2n+1}$. This establishes the analytic properties needed for each subsequent application of \eqref{eq:Nagumo}.

The rational identities
\[
\sin(2F)=\frac{4\la r(\la^2-r^2)}{(r^2+\la^2)^2},\qquad
\cos(2F)=\frac{\la^4-6\la^2r^2+r^4}{(r^2+\la^2)^2}
\]
and \eqref{eq:complex-denominator} give a universal $C_B\ge1$ such that
\[
\norm{b_k(r,\cdot)}_0\le\frac{C_B^k}{k!s^2},\qquad k\ge2.
\]
We claim that there is a sequence $(a_n)_{n\ge1}$ of at most exponential growth satisfying
\begin{equation}\label{eq:coefficient-bound}
\norm{\phi_n(r,\cdot)}_{2n}
\le a_n Z_\ep(s)s^{2n}\ell_\ep(s)^n.
\end{equation}
Set $P_n(s)=Z_\ep(s)s^{2n}\ell_\ep(s)^n$. Equations \eqref{eq:first-resonance} and \eqref{eq:source}, applied with the weight $d^2$, give
\[
\norm{\phi_1(r,\cdot)}_2\le2C_RC_DC_F P_1(s).
\]
Suppose \eqref{eq:coefficient-bound} holds for every index smaller than $n$. The derivative estimate in \eqref{eq:Nagumo} implies
\[
\begin{split}
\norm{\partial_{zz}\phi_{n-1}(r,\cdot)}_{2n}
&\le C_D(2n-1)(2n)a_{n-1}\\
&\qquad{}\times Z_\ep(s)s^{2n-2}\ell_\ep(s)^{n-1}.
\end{split}
\]
Applying \eqref{eq:radial-n} with weight $d^{2n}$, and using $\ell_\ep\ge1$, gives
\[
\norm{\cR_\la\partial_{zz}\phi_{n-1}(r,\cdot)}_{2n}
\le6C_RC_Da_{n-1}P_n(s),
\]
because
\[
\frac{(2n-1)(2n)}{n(n-1)}=4+\frac{2}{n-1}\le6.
\]
Here the Cauchy estimate is applied to the entire holomorphic function $\phi_{n-1}(r,z)$. It includes all axial dependence of its previously applied radial inverse operators.

For each composition $i_1+\cdots+i_k=n$, the product inequality in \eqref{eq:Nagumo} gives
\[
\left\|\prod_{j=1}^k\phi_{i_j}(r,\cdot)\right\|_{2n}
\le\left(\prod_{j=1}^k a_{i_j}\right)
Z_\ep(s)^k s^{2n}\ell_\ep(s)^n.
\]
Since $0\le Z_\ep(s)\le1$, the formula for $N_n$ yields
\[
\begin{split}
\norm{N_n(r,\cdot)}_{2n}
&\le Z_\ep(s)s^{2n-2}\ell_\ep(s)^n\\
&\qquad{}\times\sum_{k=2}^n\frac{C_B^k}{k!}
\sum_{\substack{i_1+\cdots+i_k=n\\i_j\ge1}}
\prod_{j=1}^k a_{i_j}.
\end{split}
\]
Another application of the weighted radial estimate gives
\[
\norm{\cR_\la N_n(r,\cdot)}_{2n}
\le\frac{C_RP_n(s)}{n(n-1)}
\sum_{k=2}^n\frac{C_B^k}{k!}
\sum_{\substack{i_1+\cdots+i_k=n\\i_j\ge1}}
\prod_{j=1}^k a_{i_j}.
\]
Choose one universal constant
\[
C\ge\max\{1,\,2C_RC_DC_F,\,6C_RC_D,\,C_R,\,C_B\},
\]
and define
\begin{align}
a_1&=C,\notag\\
a_n&=Ca_{n-1}
+C\sum_{k=2}^n\frac{C^k}{k!}
\sum_{\substack{i_1+\cdots+i_k=n\\ i_j\ge1}}
a_{i_1}\cdots a_{i_k}.\label{eq:majorant-recursion}
\end{align}
The preceding estimates and \eqref{eq:coefficient-recursion} prove \eqref{eq:coefficient-bound} by induction. In this upper bound the favorable factor $1/[n(n-1)]$ in the nonlinear term has been discarded using $1/[n(n-1)]\le1$.

To prove exponential growth, consider the scalar equation
\begin{equation}\label{eq:generating-function}
A=Ct+CtA+C\bigl(e^{CA}-1-CA\bigr).
\end{equation}
The holomorphic function
\[
\mathcal G(A,t)=A-Ct-CtA-C(e^{CA}-1-CA)
\]
satisfies $\mathcal G(0,0)=0$ and $\partial_A\mathcal G(0,0)=1$. The analytic implicit-function theorem supplies a holomorphic solution $A(t)$ near $0$ with $A(0)=0$. Comparing its Taylor coefficients with \eqref{eq:majorant-recursion} proves that
$A(t)=\sum_{n\ge1}a_nt^n$. On a closed disk of positive radius $t_*$ contained in its domain of holomorphicity, Cauchy's coefficient estimate gives $a_n\le Mt_*^{-n}$. Increasing a universal constant $C_0\ge1$ absorbs $M$ and gives $a_n\le C_0^n$ for every $n\ge1$.

For $z\in D_{h/2}(z_0)$ we have $d(z)\ge h/2$, so \eqref{eq:coefficient-bound} implies
\begin{equation}\label{eq:normal-convergence}
|\phi_n(r,z)|\le Z_\ep(s)
\left(\frac{4C_0s^2\ell_\ep(s)}{h^2}\right)^n,
\qquad s=|r|.
\end{equation}
Choose the universal constant in the theorem so that
\[
0<\eta\le\min\{1,(16C_0)^{-1}\}.
\]
By \eqref{eq:tube-smallness}, the quantity in parentheses in \eqref{eq:normal-convergence} is less than $1/4$ whenever $s<R$. Therefore
\[
\phi(r,z,\tau)=\sum_{n\ge1}\tau^n\phi_n(r,z)
\]
converges normally on
\[
\mathcal D_{\ep,R}\times D_{h/2}(z_0)\times\{\tau\in\C:|\tau|<2\}.
\]
Its sum is jointly holomorphic there, and every derivative series converges normally on compact subsets. At $\tau=1$, summation of the geometric majorant gives
\[
|\phi(r,z,1)|
\le CZ_\ep(s)\frac{s^2\ell_\ep(s)}{h^2}.
\]

On a compact subset with $r\ne0$, normal convergence permits termwise differentiation and substitution into the entire sine function. Both sides of \eqref{eq:tau-equation} are holomorphic in $\tau$ for $|\tau|<2$, and their Taylor coefficients agree by \eqref{eq:coefficient-recursion} and the finite formula for $N_n$. The identity theorem therefore proves \eqref{eq:tau-equation} on that domain. At $\tau=1$, it is exactly \eqref{eq:axial} for $f=F+\phi$. Each coefficient is real at real $(r,z)$, because the scale, sources, and radial integrations are real there. Hence $f$ is a real solution. In the remaining estimates we suppress $\tau=1$ in the notation.

Fix real $0<r<R/2$ and $|z-z_0|<h/4$, and use the complex polydisk
\[
\mathcal P_{r,z}=D_{r/128}(r)\times D_{h/8}(z).
\]
Its radial disk lies in the sector $|\arg\zeta|<\pi/32$ and in $D_R(0)$; its axial disk lies in $D_{h/2}(z_0)$. On its radial disk,
\[
127r/128<|\zeta|<129r/128.
\]
The explicit formulas for $Z_\ep$ and $\ell_\ep$ therefore bound their values at $|\zeta|$ by universal multiples of their values at $r$. The preceding complex bound on $\phi$ gives
\[
\sup_{\mathcal P_{r,z}}|\phi|
\le CZ_\ep(r)\frac{r^2\ell_\ep(r)}{h^2}.
\]
Cauchy's formula in each variable proves \eqref{eq:phi-bounds} for $r>0$.

For the axis, put $a=\min\{R/2,\ep/8\}>0$. The full disk $D_a(0)$ is contained in $\mathcal D_{\ep,R}$. The normally convergent series is therefore holomorphic on
$D_a(0)\times D_{h/2}(z_0)$, including at $r=0$. The inductive germ calculation showed that every $\phi_n$ is odd and has zero linear coefficient. Normal convergence of derivatives on smaller polydisks implies
\[
\phi(0,z)=\partial_r\phi(0,z)=0.
\]
Thus $f$ is odd and holomorphic near the axis and satisfies \eqref{eq:axis-data}. It also follows that $\phi_z(0,z)=0$, so \eqref{eq:phi-bounds} holds at $r=0$ by continuity. Near every point of this axis there are jointly holomorphic functions $b(s,z)$ and $c(s,z)$ such that
\[
\frac{\sin f(r,z)}{r}=b(r^2,z),\qquad
\cos f(r,z)=c(r^2,z).
\]
Indeed, the first numerator is odd and holomorphic, and the second expression is even and holomorphic. In Cartesian coordinates the associated sphere map is
\[
q_f(x_1,x_2,z)=
\bigl(x_1b(x_1^2+x_2^2,z),\,
x_2b(x_1^2+x_2^2,z),\,
c(x_1^2+x_2^2,z)\bigr).
\]
It is real analytic near the regular axis. Its classical sphere tension vanishes for $r>0$ by the equation already proved and hence vanishes at the axis by continuity. This proves the asserted Cartesian smoothness and the extension of the equation there.

For the tail estimate, fix a real point with $\ep\le r<R/2$ and $|z-z_0|<h/4$, and use the same polydisk $\mathcal P_{r,z}$. Analytic continuation of the positive-real identity for the selected branch gives
\[
\pi-F(\zeta,\omega)
=2\int_0^1
\frac{\zeta\la(\omega)}{\zeta^2+t^2\la(\omega)^2}\dd t,
\qquad (\zeta,\omega)\in\mathcal P_{r,z}.
\]
For $0\le t\le1$, the two denominator terms lie in a common fixed cone about the positive real axis, so
\[
|\zeta^2+t^2\la(\omega)^2|
\ge c\bigl(|\zeta|^2+t^2\ep^2\bigr).
\]
Together with $|\la(\omega)|\le21\ep/20$ and $|\zeta|\asymp r$, this bounds the integral by $C\ep/r$, uniformly on the polydisk. Also $Z_\ep(|\zeta|)\le C\ep/r$, and the complex bound on $\phi$, together with \eqref{eq:tube-smallness}, gives the same estimate for $\phi$. Consequently
\[
\sup_{\mathcal P_{r,z}}|\pi-f|\le C\ep/r.
\]
The two-variable Cauchy estimate yields, for every $j,k\ge0$,
\[
|\partial_r^j\partial_z^k(\pi-f)(r,z)|
\le Cj!k!\,128^j8^k\,
\ep r^{-j-1}h^{-k}.
\]
This proves \eqref{eq:tail-bounds} with constants depending only on $j,k$ and the universal constants fixed above.

Finally, consider two applications of the theorem that use holomorphic extensions of the same positive real-analytic scale on overlapping real axial intervals. By the identity theorem, these extensions agree in a complex neighborhood of every point of the real overlap. Their functions $F$, normalized by $F(0,z)=0$, and their functions $Z_\la,m_\la$ then agree along every common radial segment. Formula \eqref{eq:coefficient-recursion} gives the same $\phi_1$ in both constructions. If the coefficients agree through index $n-1$ in the common local complex domain, their axial derivatives agree there, as does the finite expression for $N_n$. The identical radial integrals from zero therefore give the same $\phi_n$. Induction proves equality of all coefficients. Since both series converge at each point of the intersection of the asserted real domains, their sums agree there. The parameters $z_0,h,\ep$ enter only the estimates and domain choices, not this recursion. This proves the overlap assertion for the constructed solutions without requiring uniqueness among all solutions with the same singular Cauchy data.
\end{proof}

\subsection{Flat scales on the complementary intervals}\label{sec:cutoff}
We choose the scale so that Theorem~\ref{thm:tube} applies uniformly on every gap. Exponential flatness controls the error after cutoff, while the polynomial radius leaves room for the exact harmonic core.

Fix the compact null set $E\subset\T$ in Theorem~\ref{thm:main}. Each component of $\T\setminus E$ is an open arc that can be lifted to an interval $(a,b)$ of length at most $L$. On that interval define
\begin{equation}\label{eq:flat-scale}
A(z)=\frac1{(z-a)(b-z)},\qquad
\la_\delta(z)=\delta e^{-A(z)},
\end{equation}
and set $\la_\delta=0$ on $E$. Fix $0<\delta<e^{-1}$ and put
\begin{equation}\label{eq:radii}
B=1+|\log\delta|,\qquad
h(z)=c_h(1+A(z))^{-2},\qquad
\rho(z)=\frac\kappa B(1+A(z))^{-4}.
\end{equation}
The positive constants $c_h,\kappa$ will be fixed independently of $\delta$ and of the individual gap. Here and below $a\ll b$ means $a\le c b$, where the fixed positive constant $c$ can be made sufficiently small by decreasing $\kappa$. For $\la/\rho$, smallness is obtained instead by decreasing $\delta$ after these constants have been fixed. The radii $h$ and $\rho$ are used on $\T\setminus E$; no smooth extension of these radii across $E$ is assumed.

\begin{lemma}\label{lem:flat-scale}
The scale satisfies
\[
\la_\delta\in C^\infty(\T),\qquad
\partial_z^j\la_\delta\big|_E=0\quad(j\ge0).
\]
Uniformly over the complementary intervals,
\begin{equation}\label{eq:gap-derivatives}
|A^{(j)}|\le C_j(1+A)^{j+1},\qquad
|\la_\delta^{(j)}|\le C_j\la_\delta(1+A)^{2j}.
\end{equation}
The constants in \eqref{eq:radii} can be chosen so that each disk $D_{h(z_0)}(z_0)$ satisfies \eqref{eq:lambda-comparison} with $\ep=\la_\delta(z_0)$, and Theorem~\ref{thm:tube} yields one exact solution on
\begin{equation}\label{eq:cusp-domain}
\mathcal C_\delta=\{(r,z):z\notin E,\ 0\le r<2\rho(z)\}.
\end{equation}
Moreover, $\rho\ll h$, $\rho\ll\dist(z,E)$, and $\sup_{z\notin E}\la_\delta(z)/\rho(z)\to0$ as $\delta\downarrow0$.
\end{lemma}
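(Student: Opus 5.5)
The proof is elementary calculus on a single gap $(a,b)$ together with bookkeeping of constants that must be uniform over all gaps. Throughout I use $b-a\le L$, so $A\ge 4/L^2$ on every gap and $1+A\asymp A$ with universal constants.

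\emph{Derivative bounds.} Write $u=z-a$, $v=b-z$ and $A=1/(uv)$. By Leibniz, each $A^{(j)}$ is a finite sum of terms $u^{-1-\alpha}v^{-1-\beta}$ with $\alpha+\beta=j$. Each such term is at most $\max(u,v)^{-1}\min(u,v)^{-1-j}$. Since $\max(u,v)\ge (b-a)/2$ and $\min(u,v)^{-1}\le \max(u,v)A\le LA$, this gives $|A^{(j)}|\le C_j(1+A)^{j+1}$, where the constants depend only on $L$. For $\la_\delta=\delta e^{-A}$, Faà di Bruno writes $\la_\delta^{(j)}$ as $\la_\delta$ times a sum of products $\prod A^{(k_i)}$ with $\sum k_i=j$. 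Each product is bounded by $(1+A)^{\sum(k_i+1)}\le (1+A)^{2j}$, which is the second bound in \eqref{eq:gap-derivatives}. As $z\to E$ from a gap we have $A\to\infty$, so $\la_\delta^{(j)}\to0$ because $e^{-A}$ beats every polynomial in $A$. A function that is $C^\infty$ off the closed set $E$, vanishes on $E$, and has all derivatives tending to zero at $E$ is $C^\infty$ on $\T$ with vanishing jets on $E$. This follows by induction on $j$, using the mean value theorem on one-sided difference quotients, which is legitimate since $E$ has no interior.

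\emph{Comparison and tube hypotheses.} Take $z_0$ in a gap and $h=h(z_0)=c_h(1+A(z_0))^{-2}$. For $|z-z_0|\le h$ with $z$ complex, $uv$ varies by a relative amount $O(h\,\min(u,v)^{-1})=O(c_h(1+A)^{-1})$, so $A(z)$ stays comparable to $A(z_0)$. The analytic continuation of $A$ then obeys $|A'|\le C(1+A(z_0))^2$ on the disk, giving $|A(z)-A(z_0)|\le Cc_h$. Hence $|\la(z)/\ep-1|=|e^{-(A(z)-A(z_0))}-1|\le 1/20$ once $c_h$ is small, which is \eqref{eq:lambda-comparison}. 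The disk also stays inside the gap, because $h\ll(1+A)^{-1}\lesssim\min(u,v)$. For \eqref{eq:tube-smallness} I take $R=4\rho(z_0)$. Since $\rho/\ep\le \delta^{-1}e^{A}$, we get $\ell_\ep(R)\le C(A+|\log\delta|)\le CB(1+A)$. Therefore
\[
R^2\ell_\ep(R)/h^2\le C\kappa^2B^{-1}(1+A)^{-8}(1+A)\,c_h^{-2}(1+A)^{4}\le C\kappa^2c_h^{-2},
\]
which is less than $\eta$ once $\kappa$ is chosen small after $c_h$. This step is the crux, because the factor $B^{-1}$ in $\rho$ is exactly what absorbs the logarithm. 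Theorem~\ref{thm:tube} then gives a solution on $r<2\rho(z_0)$, $|z-z_0|<h/4$. Because $\rho$ varies slowly on this axial window ($|\rho'|/\rho\lesssim(1+A)^2\ll 1/h$ after shrinking $c_h$), we have $2\rho(z)\le 2\rho(z_0)(1+o(1))$ there. To keep the covering clean I will instead take $R=8\rho(z_0)$, which costs only a constant in the estimate above. The overlap clause of Theorem~\ref{thm:tube} glues these local solutions into one solution on $\mathcal C_\delta$.

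\emph{Remaining assertions.} The bound $\rho\ll h$ is immediate, since $\rho/h=(\kappa/(c_hB))(1+A)^{-2}$. The bound $\rho\ll\dist(z,E)$ follows from $\dist(z,E)\ge\min(u,v)\ge 1/(LA)$. Finally,
\[
\la_\delta/\rho=\delta B\kappa^{-1}(1+A)^4e^{-A}\le C\kappa^{-1}\delta(1+|\log\delta|)\to0,
\]
uniformly over all gaps, because $(1+A)^4e^{-A}$ is bounded. The main obstacle is the uniformity check in the tube hypothesis: the logarithm $\ell_\ep$ must be absorbed by the $B^{-1}$ factor in $\rho$, and the size of the complex disks must be controlled near the gap endpoints.
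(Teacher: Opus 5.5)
Your proposal is correct and follows essentially the same route as the paper: explicit derivative bounds for $A$ combined with Fa\`a di Bruno, holomorphic continuation of $A$ on $D_{h(z_0)}(z_0)$ with $|A'|\le C(1+A)^2$, the bound $\ell_\ep(4\rho)\le C(A+B)$ absorbed by the factor $B^{-1}$ in $\rho$, and gluing via the canonical-coefficient clause of Theorem~\ref{thm:tube}. Two minor remarks. The switch to $R=8\rho(z_0)$ is unnecessary, since each $(r,z)\in\mathcal C_\delta$ already lies in the rectangle centered at $z_0=z$. In the bound for $A^{(j)}$, the fact $\max(u,v)\ge(b-a)/2$ should play no role, because gap lengths are not bounded below; the uniform estimate comes instead from $\max(u,v)^{-1}\min(u,v)^{-1}=A$ together with $\min(u,v)^{-1}\le LA$.
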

\begin{proof}
Write $u=z-a$, $v=b-z$, and $s=b-a=u+v\le L$. The identity
\[
A(z)=\frac1s\left(\frac1u+\frac1v\right)
\]
gives, for every integer $j\ge0$,
\[
|A^{(j)}(z)|
\le\frac{j!}{s}\bigl(u^{-j-1}+v^{-j-1}\bigr)
\le j!s^j A(z)^{j+1}
\le j!L^j(1+A(z))^{j+1}.
\]
The middle inequality follows from $u^{j+1}+v^{j+1}\le(u+v)^{j+1}$. For $j\ge1$, every term in the derivative of $e^{-A}$ is a constant times
\[
e^{-A}\prod_{k=1}^j\bigl(A^{(k)}\bigr)^{m_k},
\qquad \sum_{k=1}^j k m_k=j.
\]
The corresponding exponent of $1+A$ is
$\sum(k+1)m_k=j+\sum m_k\le2j$.
This proves \eqref{eq:gap-derivatives}, including $j=0$.

If $d=\dist(z,E)=\min(u,v)$, then $A\ge1/(Ld)$. For every $j,M\ge0$, the estimates just proved imply
\[
d^{-M}|\la_\delta^{(j)}(z)|
\le C_{j,M}\delta A^M(1+A)^{2j}e^{-A}
\longrightarrow0\qquad(d\downarrow0),
\]
uniformly over the gaps. Extending each derivative by zero on $E$ therefore gives a continuous function. At any $z_*\in E$, for $z\notin E$ tending to $z_*$ in a local lift, $d(z,E)\le|z-z_*|$ and hence
\[
\frac{|\la_\delta^{(j)}(z)|}{|z-z_*|}
\le\frac{|\la_\delta^{(j)}(z)|}{d(z,E)}\longrightarrow0.
\]
For $z\in E$ the numerator of the corresponding difference quotient is zero. Thus each proposed derivative has derivative zero at every point of $E$, while on the gaps its derivative is the next proposed derivative. Induction proves that the extension is smooth and flat on $E$.

Fix a gap and a real center $z_0$ in it. Write
\[
u_0=z_0-a,\qquad v_0=b-z_0,\qquad
A_0=(u_0v_0)^{-1},\qquad d_0=\min(u_0,v_0).
\]
Then $d_0^{-1}\le LA_0$ and $A_0/(1+A_0)^2\le1/4$. Choose, once and for all,
\[
c_h=\frac{\log(21/20)}{6L},\qquad h_0=c_h(1+A_0)^{-2}.
\]
In particular,
\[
\frac{h_0}{d_0}
\le Lc_h\frac{A_0}{(1+A_0)^2}
\le\frac{Lc_h}{4}<\frac14.
\]
For $\zeta=z_0+w\in D_{h_0}(z_0)$ we have
$|u_0+w|\ge3u_0/4$ and $|v_0-w|\ge3v_0/4$.
The rational function $A(\zeta)=((\zeta-a)(b-\zeta))^{-1}$ is consequently holomorphic on this disk, and
\[
\begin{aligned}
|A(\zeta)|&\le\frac{16}{9}A_0,\\
|A'(\zeta)|
&\le\frac{64}{27}\left(\frac1{u_0^2v_0}+\frac1{u_0v_0^2}\right)
=\frac{64}{27}(u_0+v_0)A_0^2<3LA_0^2.
\end{aligned}
\]
Integration along the segment from $z_0$ to $\zeta$ gives
\[
|A(\zeta)-A_0|\le3LA_0^2h_0
\le3Lc_h=\tfrac12\log(21/20).
\]
For the holomorphic continuation $\la_\delta(\zeta)=\delta e^{-A(\zeta)}$ and $\ep=\delta e^{-A_0}$, it follows that
\[
\left|\frac{\la_\delta(\zeta)}\ep-1\right|
=|e^{-(A(\zeta)-A_0)}-1|
\le e^{|A(\zeta)-A_0|}-1
\le\sqrt{21/20}-1<\frac1{20}.
\]
This is \eqref{eq:lambda-comparison}. The factor $\delta$ cancels from the estimate, and the choice of $c_h$ depends only on the fixed circle length $L$.

Let $\eta$ be the universal constant in Theorem~\ref{thm:tube}. After fixing $c_h$, choose
\[
0<\kappa\le\min\left\{
\frac14,\frac{c_h}{64},\frac1{64L},\frac{c_h\sqrt\eta}{16}
\right\}.
\]
This choice depends only on $L$ and $\eta$. Since $B>2$, it gives
\[
\begin{aligned}
\frac\rho h&=\frac{\kappa}{c_hB(1+A)^2}
\le\frac{\kappa}{2c_h}\le\frac1{128},\\
\frac\rho d&\le\frac{\kappa LA}{B(1+A)^4}
\le\frac{\kappa L}{4B}\le\frac1{512}.
\end{aligned}
\]
Here $A/(1+A)^4\le A/(1+A)^2\le1/4$. These bounds quantify both geometric comparisons in the statement.

To estimate the logarithmic factor, observe that
\[
4\rho=\frac{4\kappa}{B}(1+A)^{-4}<1,
\qquad 0<\la=\delta e^{-A}<1.
\]
Consequently $1+4\rho/\la\le1+1/\la\le2/\la$, and hence
\[
\ell_\la(4\rho)
\le1+\log2-\log\la
=A+B+\log2\le2(A+B).
\]
The last inequality uses $A+B>2>\log2$. In particular, the first estimate in the following display holds with the numerical constant $2$:
\begin{equation}\label{eq:log-radius}
\ell_{\la(z)}(4\rho(z))\le C(B+A(z)),\qquad
\frac{(4\rho)^2\ell_{\la}(4\rho)}{h^2}
\le C\frac{\kappa^2(B+A)}{B^2(1+A)^4}\le C\kappa^2.
\end{equation}
More explicitly, the second estimate satisfies
\[
\begin{aligned}
\frac{(4\rho)^2\ell_\la(4\rho)}{h^2}
&\le\frac{32\kappa^2}{c_h^2}
\frac{B+A}{B^2(1+A)^4}\\
&\le\frac{32\kappa^2}{c_h^2B(1+A)^3}
\le\frac{16\kappa^2}{c_h^2}
\le\frac\eta{16}<\eta.
\end{aligned}
\]
We used $B+A\le B(1+A)$ in the second line. Thus \eqref{eq:tube-smallness} holds with
$R=4\rho(z_0)$, $h=h(z_0)$, and $\ep=\la_\delta(z_0)$,
uniformly in $\delta$ and the gap.

Theorem~\ref{thm:tube} produces a solution on each rectangle
\[
0\le r<2\rho(z_0),\qquad |z-z_0|<h(z_0)/4.
\]
Every point $(r,z)$ of \eqref{eq:cusp-domain} belongs to such a rectangle by taking $z_0=z$. The canonical-coefficient assertion of that theorem identifies the solutions on every overlap. The real axial intervals stay within their original gaps because $h(z_0)<d_0/4$. The solutions therefore define a single exact solution on $\mathcal C_\delta$, without enlarging the output domain of Theorem~\ref{thm:tube}.

Finally, elementary differentiation shows that
\[
\sup_{A\ge0}e^{-A}(1+A)^4=256e^{-3},
\]
with the maximum attained at $A=3$. Therefore
\[
\sup_{z\notin E}\frac{\la_\delta(z)}{\rho(z)}
\le\frac{256e^{-3}}\kappa\delta B\longrightarrow0.
\]
After the fixed choices of $c_h,\kappa$, choose $\delta_0>0$ so that
$\la_\delta(z)\le\rho(z)/100$ for all $z\notin E$ and $0<\delta<\delta_0$.
On the cutoff annuli used below this ensures $r\ge\ep$, as required by \eqref{eq:tail-bounds} with center $z_0=z$. Those annuli also lie strictly within $r<2\rho(z_0)$, so no further restriction depending on the gap is needed.
\end{proof}

\subsection{Proof of Proposition~\ref{prop:construction}:  Cutoff and global assembly}\label{sec:assembly}
\begin{proof}[Proof of Proposition~\ref{prop:construction}]
Lemma~\ref{lem:flat-scale} gives the canonical solution $f$ on \eqref{eq:cusp-domain}, with the axis data and estimates from Theorem~\ref{thm:tube}. We now perform the cutoff and verify that it defines a global map on the torus.

Use the cutoff $\chi$ fixed in Proposition~\ref{prop:construction}. For $(p(\xi),z)\in\Omega_\delta(2)$ with $\xi=r(\cos\theta,\sin\theta)\ne0$, define
\begin{equation}\label{eq:angle-cutoff}
\widetilde f=\pi+\chi(r/\rho(z))(f-\pi),\qquad
q_\delta(p(\xi),z)=(\sin\widetilde f\cos\theta,
\sin\widetilde f\sin\theta,\cos\widetilde f).
\end{equation}
Set $q_\delta=\Sp$ on $X\setminus\bigl(\Omega_\delta(2)\cup S_E\bigr)$ and $q_\delta|_{S_E}=v_E$. At $(0,0,z)$ with $z\in\T\setminus E$, the analytic extension gives
\[
q_\delta(0,0,z)=\Np.
\]

We verify that these definitions give a map on $X$. For $p:\R^2\to\T^2=\R^2/(L\mathbb Z)^2$ and $D=B_1^{\R^2}(0)$ fixed above, $L=10$ implies that $p|_D$ is an isometric embedding. Formula \eqref{eq:angle-cutoff} defines $q_\delta(p(\xi),z)$ on $\Omega_\delta(2)\setminus\{(0,0,z):z\in\T\setminus E\}$; the preceding extensions cover all remaining points of $p(D)\times\T$. On $(\T^2\setminus p(D))\times\T$ we have $q_\delta=\Sp$.
These definitions agree on a fixed neighborhood of the chart boundary. Indeed,
$\rho(z)\le\kappa/B\le\kappa/2$, and $\chi(t)=0$ for $t\ge3/4$, so
\[
q_\delta(p(\xi),z)=\Sp
\quad\hbox{whenever}\quad
|\xi|\ge3\kappa/8.
\]
For $z\in E$ the same assertion follows from the exterior convention, since $|\xi|>0$. Our fixed choice gives $3\kappa/8\le3/32<1$. The map is therefore constant on an open collar of $\partial p(D)$, and the two definitions agree there with all derivatives. More precisely,
\[
\overline{\{x\in X:q_\delta(x)\ne\Sp\}}
\subset p\bigl(\overline B_{3\kappa/8}^{\R^2}(0)\bigr)\times\T,
\]
whose transverse factor is compactly contained in $p(D)$.

The resulting map is defined on $X=\T^2\times\T$. Its pullback to $\R^3$ is $L$-periodic in the two transverse variables because these variables were defined on the quotient. For every $k\in\mathbb Z$, translating a lifted gap gives
\[
(A,\la_\delta,\rho)_{(a+kL,b+kL)}(z+kL)
=(A,\la_\delta,\rho)_{(a,b)}(z).
\]
The canonical coefficient recursion is unchanged, so $f_{(a+kL,b+kL)}(r,z+kL)=f_{(a,b)}(r,z)$. Hence the definition is $L$-periodic in $z$ and introduces no singularities at coordinate boundaries.

In what follows we omit the subscript $\delta$ and take $0<\delta<\delta_0$. Write
\[
\mathcal A_\delta=\{(r,\theta,z):z\notin E,\ \rho(z)/2\le r\le3\rho(z)/4\}
\]
for the cutoff annuli. The map is smooth on $X\setminus S_E$. At a point $r>0,z\in E$, it is locally constant because $\rho(z')\to0$ as $z'\to E$ through $\T\setminus E$; this fact uses only the radius formula, not smoothness of $\rho$ across $E$.

On $\Omega_\delta(1/2)$ the cutoff is one, and $q_\delta$ is the exact harmonic map. On $X\setminus\bigl(\Omega_\delta(3/4)\cup S_E\bigr)$ it equals $\Sp$, while $q_\delta(0,0,z)=\Np$ for $z\in\T\setminus E$. These observations and the scale separation in Lemma~\ref{lem:flat-scale} prove the proposition.
\end{proof}

\section{Verification of the construction}\label{sec:verification}
In this section, we verify that the map of Proposition~\ref{prop:construction} satisfies every hypothesis of Theorem~\ref{prop:repair}. The cutoff estimates give smooth residuals and finite energy. A capacity argument gives the weak equation. A separate tube-flux calculation gives the full stress identity and excludes a distribution supported on $S_E$. Indeed, we prove

\begin{proposition}\label{prop:verification}
Let $q_\delta$ be as in Proposition~\ref{prop:construction}, and put $B=1+|\log\delta|$. Then
\[
q_\delta\in W^{1,2}(X,\sph),\qquad
\sup_{0<\delta<\delta_0}E(q_\delta;X)<\infty,\qquad
\sing q_\delta=S_E,\qquad \capacity_2(S_E)=0,
\]
and every point of $S_E$ is an essential discontinuity. The classical fields
\[
e_\delta=\Delta q_\delta+|Dq_\delta|^2q_\delta,\qquad
(\cF_\delta)_j=-2e_\delta\cdot\partial_jq_\delta
\quad\text{on }X\setminus S_E
\]
extend smoothly and flatly across $S_E$. On all of $X$,
\[
\Delta q_\delta+|Dq_\delta|^2q_\delta=e_\delta,\quad
\Div T_{q_\delta}=\cF_\delta\quad\text{in }\mathcal D'(X),\qquad
\int_X\cF_\delta=0.
\]
For every integer $m\ge0$,
\begin{equation}\label{eq:verification-bounds}
\|e_\delta\|_{C^m}\le C_m\delta B^{m+3},\qquad
\|\cF_\delta\|_{C^m}\le C_m\delta^2B^{m+5}.
\end{equation}
Consequently the smallness condition of Theorem~\ref{prop:repair} holds for sufficiently small $\delta$.
\end{proposition}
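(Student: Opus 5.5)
The proof has three parts: pointwise estimates on the cutoff annuli, energy and capacity arguments, and the stress flux computation. I take them in that order.

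\emph{Residual estimates.} On $\Omega_\delta(1/2)$ the map is exactly harmonic and on $X\setminus(\Omega_\delta(3/4)\cup S_E)$ it is constant, so $e_\delta$ and $\cF_\delta$ are supported in the cutoff annuli $\mathcal A_\delta$. There I write $\widetilde f-\pi=\chi(r/\rho)(f-\pi)$ and use the tail bounds \eqref{eq:tail-bounds} with $\ep=\la_\delta(z)$, $h=h(z)$. These give $|\partial_r^j\partial_z^k(\pi-f)|\le C\la_\delta r^{-j-1}h^{-k}$. Derivatives of $\chi(r/\rho(z))$ cost factors $\rho^{-1}$ and $|\rho'|/\rho\lesssim(1+A)^2$, by \eqref{eq:gap-derivatives}.

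The equivariant tension is $\widetilde f_{rr}+r^{-1}\widetilde f_r+\widetilde f_{zz}-\sin(2\widetilde f)/(2r^2)$, times the unit vector $\partial_f q$. Since $f$ solves \eqref{eq:axial}, the tension is a commutator term plus a nonlinear term that is quadratic in $\chi$ versus $\chi^2$. Each of these is bounded by $\la_\delta\rho^{-3}$ times powers of $(1+A)$. Near $\pi$ the Cartesian map is smooth, since $\sin\widetilde f/r$ and $\cos\widetilde f$ are smooth in $r^2$ there. Hence Cartesian $C^m$ norms of $e_\delta$ cost at most $\rho^{-m}$.

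Using $\rho=\kappa B^{-1}(1+A)^{-4}$ and $\la_\delta=\delta e^{-A}$, every term has the form $\delta B^{m+3}e^{-A}(1+A)^{N}$, which is bounded. This gives the first bound in \eqref{eq:verification-bounds}. The factor $e^{-A}$, with $A\to\infty$ at $E$, makes all derivatives tend to zero at $S_E$, so the extension by zero is smooth and flat, exactly as in Lemma~\ref{lem:flat-scale}.

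For $\cF_\delta=-2e_\delta\cdot\partial_j q_\delta$, the factor $|Dq_\delta|\lesssim\la_\delta/\rho^2$ on the annulus supplies the second power of $\delta$ and two more powers of $B$.

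\emph{Energy and capacity.} The energy of $q_\delta$ on each slice is at most the bubble energy $4\pi$ plus an $O(1)$ error from \eqref{eq:phi-bounds} and the annulus. Integrating over $z\in\T$ gives a uniform bound. Since $\cH^1(S_E)=0$, the set $S_E$ has zero $2$-capacity. Lemma~\ref{lem:capacity}(i) then gives $q_\delta\in W^{1,2}$, and the cutoff argument of Lemma~\ref{lem:capacity}(ii), applied to the inhomogeneous equation with smooth right side $e_\delta$, gives the weak tension identity on all of $X$.

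Essential discontinuity at $(0,0,z_*)$ with $z_*\in E$ holds because $E$ has empty interior. Every neighborhood therefore contains axis points of gaps, where $q=\Np$, and points with $r>0$ small but $r\ge 3\rho(z)/4$, where $q=\Sp$. Smoothness off $S_E$ is Proposition~\ref{prop:construction}. Finally, $\int_X\cF_\delta=0$ follows from the distributional identity tested against constant vector fields.

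\emph{Stress identity (main obstacle).} Off $S_E$, the identity \eqref{eq:stress-identity-smooth} gives $\Div T_q=\cF$. The task is to exclude a distribution supported on $S_E$. For $Y\in C_c^\infty$ I cut off with the tubes $\{r<\sigma\}$ over the gaps, together with neighborhoods of $E$, and need the boundary flux $\int T_q(\nu,Y)$ to vanish in the limit.

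I would first replace $q$ by the explicit varying bubble $q_F$ near the axis, using \eqref{eq:phi-bounds}. For $q_F$ the radial flux through $r=\sigma$ contains the radial stress $|\partial_r F|^2-r^{-2}\sin^2F$, which has the form $\la$-scale terms integrating to zero over the circle at leading order. The remaining axial component has the form $\partial_z(\text{bounded function of } z)$; integrating it by parts in $z$ against $Y$ over each gap yields a term controlled by the measure of the region where $\la_\delta(z)\gtrsim\sigma$, which shrinks to $E$.

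The contributions near $E$ are bounded by the energy in shrinking neighborhoods of a Lebesgue-null set, so they vanish by absolute continuity and dominated convergence, using $|E|=0$. I expect this flux bookkeeping, in particular obtaining uniform domination across infinitely many gaps, to be the hardest step.
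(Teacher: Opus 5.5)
Your residual estimates, energy bound, capacity extension of the tension equation and essential-discontinuity argument follow the paper (Lemma~\ref{lem:annulus}, Proposition~\ref{prop:q-properties}). The only omission there is the axial term $f_z^2$ in the slice energy, which is harmless because $|\la'|/\la\lesssim h^{-1}$ and $r\le\rho\ll h$.

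The gap is in the stress identity, at the step you flag yourself. A stress flux through a surface is not controlled by the nearby energy, and near $E$ no bound by absolute values can work. For the varying bubble, $\sigma(T_Q)_{rz}=-4\partial_z g_\sigma$ with $g_\sigma=\sigma^2/(\sigma^2+\la^2)$. At each gap endpoint there is a transition layer $\{\la\sim\sigma\}$ that moves into the endpoint as $\sigma\downarrow0$. This layer lies inside the core, since there $\rho\approx\kappa B^{-1}(1+\log(\delta/\sigma))^{-4}\gg\sigma$. The energy in the tube over this layer tends to zero, but $\int4|\partial_zg_\sigma|\,dz$ over it stays of order one. Over a whole gap $(a,b)$ it equals $8\la_{\max}^2/(\sigma^2+\la_{\max}^2)\to8$, where $\la_{\max}=\max_{[a,b]}\la$. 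So the absolute flux inside any fixed neighborhood of $E$ does not tend to zero, and ``energy in shrinking neighborhoods'' cannot dispose of it.

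Integrating by parts gap by gap does not close the argument either. Since $g_\sigma=1$ where $\la=0$, each gap leaves the endpoint term $-4\bigl(\overline{Y_z}(b)-\overline{Y_z}(a)\bigr)$, where $\overline{Y_z}=\int_0^{2\pi}Y_z\,d\theta$. These terms are not small. They cancel only after summing over all gaps: the sum equals $4\int_E\partial_z\overline{Y_z}\,dz$, which vanishes because $|E|=0$. Also, the region $\{\la\gtrsim\sigma\}$ you cite exhausts $\T\setminus E$; it is $\{\la\lesssim\sigma\}$ that shrinks to $E$.

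The missing step is to do the flux computation globally in $z$, as in Proposition~\ref{prop:stress}. The reference map $Q$ of \eqref{eq:reference-map} is defined for every $z\in\T$ (it equals $\Sp$ over $E$), and $\la\in C^\infty(\T)$, so $g_\sigma$ is smooth on the whole circle. The axial flux then integrates by parts, with no boundary terms, to $4\int_\T\int_0^{2\pi}g_\sigma\,\partial_zY_z\,d\theta\,dz$. This tends to zero by dominated convergence, since $0\le g_\sigma\le1$, $g_\sigma\to1_E$ and $|E|=0$. The radial flux is harmless: $F_r^2-\sin^2F/r^2\equiv0$, so $r^2(T_Q)_{rr}=r^2F_z^2\le4\la'^2$.

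The uniform domination over infinitely many gaps, which you anticipate, is supplied by Lemma~\ref{lem:stress-comparison}. There $\sigma|T_q-T_Q|$ is bounded by flat $L^1(\T)$ majorants: $C\la\ell_\la(2\rho)/h^2$ where $\sigma<\rho/2$, and $C\la^2\rho^{-3}$ where $\sigma\ge\rho/2$. The second bound covers the region where $q$ is cut off but $Q$ is not. With these ingredients you excise the full tube $\{r<\sigma\}$ around the whole axis, and no separate neighborhoods of $E$ are needed.
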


We omit the subscript $\delta$ where there is no ambiguity and use the annuli $\mathcal A_\delta$ defined in Subsection~\ref{sec:assembly}.

\subsection{Smoothness and smallness of the cutoff residuals}
The exact core has zero tension. All residuals vanish outside the cutoff annuli, where the map is close to the south pole. We estimate them in Cartesian coordinates.

\begin{lemma}\label{lem:annulus}
For every integer $m\ge0$ there is $C_m$, independent of sufficiently small $\delta$ and of the gap, such that on $\mathcal A_\delta$
\begin{align}
|D^m(q-\Sp)|&\le C_m\la\rho^{-m-1},\label{eq:q-annulus}\\
|D^me|&\le C_m\la\rho^{-m-3},\label{eq:e-annulus}\\
|D^m(e\cdot Dq)|&\le C_m\la^2\rho^{-m-5},\label{eq:force-annulus}
\end{align}
where $e=\Delta q+|Dq|^2q$ is initially computed on $X\setminus S_E$ and $D^m$ denotes physical Cartesian derivatives. Both $e$ and $e\cdot Dq$ extend smoothly and flatly across $S_E$, with
\begin{equation}\label{eq:residual-Cm}
\norm e_{C^m}\le C_m\delta B^{m+3},\qquad
\norm{e\cdot Dq}_{C^m}\le C_m\delta^2B^{m+5}.
\end{equation}
\end{lemma}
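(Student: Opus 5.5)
The plan is to work in the polar description $q=q_{\widetilde f}$ with $\widetilde f=\pi+\chi(r/\rho(z))(f-\pi)$. We prove the three annulus bounds first and then use the flat scale to pass to global $C^m$ bounds.

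\emph{Bound \eqref{eq:q-annulus}.} On $\mathcal A_\delta$ we have $r\asymp\rho(z)$. Lemma~\ref{lem:flat-scale} gives $\la\le\rho/100$, so $r\ge\ep$ and the tail estimate \eqref{eq:tail-bounds} of Theorem~\ref{thm:tube} applies with center $z_0=z$. It yields
\[
|\partial_r^j\partial_z^k(\pi-f)|\le C\la r^{-j-1}h^{-k}.
\]
Since $\rho\ll h$, each $h^{-k}$ can be replaced by $\rho^{-k}$.

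Next, \eqref{eq:gap-derivatives} gives $|\partial_z^k\rho|\le C\rho(1+A)^{2k}$. We also need $(1+A)^2\lesssim\rho^{-1}$; indeed $(1+A)^2\le (1+A)^4\le\kappa/(B\rho)$. Hence every derivative of $\chi(r/\rho(z))$ is bounded by $C\rho^{-j-k}$. By Leibniz,
\[
|\partial_r^j\partial_z^k(\widetilde f-\pi)|\le C\la\rho^{-1-j-k},
\]
where the constant is independent of $\delta$ and of the gap. The factor $\la(z)$ may be taken at the base point, because $\la$ varies by a bounded factor over $h$-disks.

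We then convert to Cartesian derivatives. Near the south pole, $q-\Sp$ is a smooth function of $(\sin\widetilde f\cos\theta,\ \sin\widetilde f\sin\theta,\ 1+\cos\widetilde f)$. On the annulus $r\asymp\rho$ is bounded away from $0$ at scale $\rho$, so each Cartesian derivative costs at most $\rho^{-1}$. This gives \eqref{eq:q-annulus}.

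\emph{Bounds \eqref{eq:e-annulus} and \eqref{eq:force-annulus}.} On the annulus, $e=\Delta q+|Dq|^2q$ is a polynomial expression in $D^{\le2}(q-\Sp)$ and $q$. Every term contains at least one factor $D^2(q-\Sp)$, or the product $|Dq|^2$, which is quadratic in $D(q-\Sp)$. The ratio $\la/\rho$ is small, so quadratic terms are dominated by linear ones, and \eqref{eq:e-annulus} follows by Leibniz from \eqref{eq:q-annulus}. For $e\cdot Dq$ we multiply by a factor $D(q-\Sp)=O(\la\rho^{-2})$; note that $D\Sp=0$. This gives the extra power $\la\rho^{-2}$ in \eqref{eq:force-annulus}.

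\emph{Extension across $S_E$ and global bounds.} Off the annuli, $e=0$: on $\Omega_\delta(1/2)$ the map is exactly harmonic by Proposition~\ref{prop:construction}, including on the regular axis, and outside $\Omega_\delta(3/4)$ the map is the constant $\Sp$. The residuals are therefore supported in $\mathcal A_\delta$ and are smooth on $X\setminus S_E$.

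For the bounds we insert $\la=\delta e^{-A}$ and $\rho^{-1}=B\kappa^{-1}(1+A)^4$. Then
\[
\la\rho^{-m-3}\le C\delta B^{m+3}e^{-A}(1+A)^{4(m+3)}\le C_m\delta B^{m+3},
\]
and likewise $\la^2\rho^{-m-5}\le C_m\delta^2B^{m+5}$. This is \eqref{eq:residual-Cm} on $X\setminus S_E$.

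Flatness needs one more observation. Approaching a point of $S_E$ forces $\dist(z,E)\to0$, so $A\to\infty$. The factor $e^{-A}(1+A)^N$ then tends to $0$ faster than any power of $\dist(z,E)$, because $A\ge 1/(L\,\dist)$. We can now argue as in Lemma~\ref{lem:flat-scale}: extend every derivative by zero on $S_E$ and check, via difference quotients, that each extended derivative is differentiable with derivative equal to the next extended one. This shows that $e$ and $e\cdot Dq$ extend as $C^\infty$ functions that are flat on $S_E$, with the same global bounds.

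The main obstacle is the bookkeeping of $z$-derivatives. The varying radius $\rho(z)$ in the cutoff, and the base-point dependence of the tail estimate, must both cost only $\rho^{-1}$ per derivative, uniformly over all gaps. This reduces to the inequality $(1+A)^{2}\lesssim\rho^{-1}$ together with the comparison $\rho\ll h$.
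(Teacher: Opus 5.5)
Your proposal is correct and follows essentially the same route as the paper. Both arguments run through the same steps:
\begin{enumerate}
\item apply the tail bounds of Theorem~\ref{thm:tube} with center $z_0=z$;
\item use $\rho\ll h$ to get $|\partial_r^j\partial_z^k\chi(r/\rho)|\le C\rho^{-j-k}$;
\item use Leibniz and chain rules with $\la/\rho\le1$ to get the three annulus bounds;
\item note that the residuals vanish off $\mathcal A_\delta$;
\item substitute $\la=\delta e^{-A}$ and $\rho^{-1}=B\kappa^{-1}(1+A)^4$ to obtain both the global $C^m$ bounds and flatness by zero extension.
\end{enumerate}
In the flatness step you only need the one-sided inequality $\dist(z,E)\le\dist(x,S_E)$, not the paper's two-sided comparability; it is trivially true, but you should state it explicitly.
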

\begin{proof}
On $r\asymp\rho$, Theorem~\ref{thm:tube} gives
\[
|\partial_r^j\partial_z^k(\pi-f)|
\le C_{j,k}\la\rho^{-j-1}h^{-k}
\le C_{j,k}\la\rho^{-j-k-1}.
\]
Here $\la\ll\rho\ll h$. Differentiating \eqref{eq:radii} also gives
$|\partial_z^k\rho|\le C_k\rho h^{-k}$.
Thus, for $j+k=m$,
\[
|\partial_r^j\partial_z^k\chi(r/\rho)|\le C_m\rho^{-m}.
\]
Since $r\asymp\rho$, passage to Cartesian derivatives gives the same estimate $|D^m\chi(r/\rho)|\le C_m\rho^{-m}$. The chain and product rules in \eqref{eq:angle-cutoff}, using $\la/\rho\le1$, prove \eqref{eq:q-annulus}.

The Laplacian term in $D^me$ is bounded by $C_m\la\rho^{-m-3}$. The differentiated quadratic term is at most a constant times $\la^2\rho^{-m-4}$, with possible additional factors $\la/\rho\le1$. This proves \eqref{eq:e-annulus}. Multiplication by the differentiated gradient of $q$ gives \eqref{eq:force-annulus}.

The exact equation in the core and constancy in the exterior give
\[
e=0,\qquad e\cdot Dq=0
\quad\text{on }(X\setminus S_E)\setminus\mathcal A_\delta.
\] On the annuli the distance to $S_E$ is comparable to $\dist(z,E)$ because $\rho\ll\dist(z,E)$. Substitution of \eqref{eq:flat-scale}--\eqref{eq:radii} into \eqref{eq:e-annulus} and \eqref{eq:force-annulus} gives, for all integers $m,M\ge0$,
\[
\lim_{\substack{\dist(x,S_E)\to0\\x\notin S_E}}
\frac{|D^me(x)|+|D^m(e\cdot Dq)(x)|}{\dist(x,S_E)^M}=0.
\]
The zero extensions are therefore smooth and flat. Finally,
\[
\la\rho^{-m-3}
=\delta e^{-A}(B/\kappa)^{m+3}(1+A)^{4m+12},
\]
and the analogous force bound contains $\delta^2e^{-2A}(B/\kappa)^{m+5}(1+A)^{4m+20}$. Taking suprema in $A$ proves \eqref{eq:residual-Cm}.
\end{proof}

\subsection{Energy, weak extension, and essential discontinuities}\label{sec:energy}
The slice-energy bounds give finite energy, and zero capacity permits extension of the tension equation. The two pole values identify the singular set independently of the values assigned on a null set.

\begin{proposition}\label{prop:q-properties}
For sufficiently small $\delta_0>0$,
\[
q_\delta\in W^{1,2}(X,\sph),\qquad
\sup_{0<\delta<\delta_0}\int_X|Dq_\delta|^2\dd x<\infty,
\]
and
\begin{equation}\label{eq:weak-tension}
\Delta q_\delta+|Dq_\delta|^2q_\delta=e_\delta
\end{equation}
on all of $X$ in distributions. Its singular set, in the sense of essential discontinuity, is exactly $S_E$.
\end{proposition}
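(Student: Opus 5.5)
The plan is to estimate the energy slice by slice in $z$, then transfer the weak equation across $S_E$ by a capacity argument, and finally read off the essential discontinuities from the two pole values.

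\emph{Energy.} For $z\in\T\setminus E$, split the transverse disk into the core $r\le\rho(z)/2$, the annulus $\rho/2\le r\le3\rho/4$, and the exterior; $q=\Sp$ on the exterior, and slices with $z\in E$ contribute nothing, since $E$ is null.

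In the core, write $f=F+\phi$. The transverse energy $\int(f_r^2+r^{-2}\sin^2f)\,r\dd r\dd\theta$ of the exact bubble $F$ is $8\pi$. By \eqref{eq:phi-bounds}, $|\phi|+r|\phi_r|\le CZ_\ep r^2\ell_\ep/h^2$, and since $r^2\ell_\ep(r)/h^2\le C\kappa^2$ by \eqref{eq:log-radius}, the perturbation changes the transverse energy only by $O(1)$. The axial part $f_z^2=(F_z+\phi_z)^2$ is handled as follows. We have $F_z=-Z_\la\la'/\la$ with $|\la'/\la|\le C(1+A)^2$ by \eqref{eq:gap-derivatives}. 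Since $\int Z_\la^2\,r\dd r\le C\la^2\ell$, this gives
\[
\int_{r<\rho}|F_z|^2\,r\dd r\le C\la^2(1+A)^4(A+B),
\]
which is bounded uniformly, and $\phi_z$ is smaller by \eqref{eq:phi-bounds}.

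On the annulus, \eqref{eq:q-annulus} gives $|Dq|\le C\la\rho^{-2}$, so the annulus contributes at most $C\la^2\rho^{-2}\le C$ per slice. Integrating over $z\in\T$ therefore gives a bound independent of $\delta$. The slice bound uses only that $\la/\rho$ is small and that $(1+A)^{O(1)}e^{-A}$ is bounded.

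\emph{Weak equation.} Since $S_E\subset\{0\}\times E$ with $E$ null, $\cH^1(S_E)=0$, so $\capacity_2(S_E)=0$ (as noted after Lemma~\ref{lem:capacity}). Lemma~\ref{lem:capacity}(i) applied to $q$, which is smooth and bounded off $S_E$ with $Dq\in L^2$, gives $q\in W^{1,2}(X,\sph)$. For the equation with right-hand side $e$, I repeat the cutoff argument of Lemma~\ref{lem:capacity}(ii) directly for the vector test function form. Test $\Delta q+|Dq|^2q=e$ against $\zeta_k\psi$, where $\zeta_k$ are the capacity cutoffs. The boundary error $\int\partial\zeta_k\,\psi\cdot Dq$ tends to $0$ by Cauchy--Schwarz. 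The other terms $\zeta_k|Dq|^2q\cdot\psi$ and $\zeta_ke\cdot\psi$ converge by dominated convergence, since $|Dq|^2\in L^1$ and $e$ is smooth by Lemma~\ref{lem:annulus}.

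\emph{Singular set.} The main point is showing that every point of $S_E$ is an essential discontinuity, and this is where I expect the only real care. Fix $(0,0,z_*)$ with $z_*\in E$. Since $E$ has empty interior, gaps accumulate at $z_*$, and points $z\notin E$ arbitrarily close to $z_*$ exist.

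Near such a $z$, on the set $r<\la(z)$ (relative to a small cylinder), $f$ is close to $F\le\pi/2$, so $q$ lies in the upper hemisphere on a set of positive measure. On the set $3\rho(z)/4<r<\text{const}$, $q=\Sp$. Both sets have positive measure in every ball around $(0,0,z_*)$. Hence no value $v$ makes
\[
\frac{1}{|B_s|}\int_{B_s}|q-v|\to0,
\]
so no representative is continuous there, independently of $v_E$. Off $S_E$, $q$ is smooth by Proposition~\ref{prop:construction}. Therefore $\sing q=S_E$ exactly.
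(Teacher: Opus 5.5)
Your energy estimate and the capacity argument for the weak equation are correct and follow the paper's route. Your treatment of the axial term, via $\int_0^\rho Z^2r\,dr\le C\la^2\ell_\la(\rho)$ and flatness of $\la$, is a harmless variant of the paper's absorption $|f_z|\le CZ/h\le CZ/r$.

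The gap is in the essential-discontinuity step, which is exactly the step you flagged. From ``both sets have positive measure in every ball'' you conclude that no $v$ makes $|B_s|^{-1}\int_{B_s}|q-v|\to0$. That inference confuses positive measure with positive density, and the conclusion is in fact false.

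To see this, fix $a=(0,0,z_*)$ with $z_*\in E$. Off $S_E$, the set $\{q\ne\Sp\}$ lies in the tubes $|\xi|<\tfrac34\rho(z)$. Since $A(z)\ge1/(L\dist(z,E))$, for $|z-z_*|<s$ you get
\[
\rho(z)\le\frac{\kappa}{B}A(z)^{-4}\le\frac{\kappa L^4}{B}\dist(z,E)^4\le Cs^4 .
\]
Hence $|\{q\ne\Sp\}\cap B_s(a)|\le Cs^9$, and $|B_s|^{-1}\int_{B_s(a)}|q-\Sp|\le Cs^6\to0$. So every point of $S_E$ is a Lebesgue point of $q$ with approximate limit $\Sp$, and the blow-ups at $a$ converge to the constant $\Sp$. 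The region where $q$ lies in the upper hemisphere has measure of order $\int\la^2\,dz$, which is negligible at every scale. A mean-oscillation criterion therefore cannot detect these singularities; checked honestly, it gives the opposite answer.

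The repair uses only what you already established, but with essential oscillation in place of mean oscillation. Restrict to $r<\la(z)/2$, so that $f<\pi/2$ is guaranteed. Then for every small $s$ there are positive-measure sets $V_\pm\subset B_s(a)$ on which, almost everywhere, $q_3>0$ and $q=\Sp$ respectively. For $w\in\sph$ with $w_3>0$ one has $|w-\Sp|^2=2+2w_3>2$. Hence
\[
\operatorname*{ess\,sup}_{x,y\in B_s(a)}|q(x)-q(y)|\ge\sqrt2\qquad\text{for all small }s .
\]
A representative $\widehat q$ continuous at $a$ would give $\operatorname{diam}\widehat q(B_s(a))\to0$. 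This diameter dominates the essential supremum above, which is a contradiction. This is the paper's argument. There, continuity of $q$ at the regular points $(0,0,z_+)$ with $z_+\notin E$, and $(s/4,0,z_*)$, supplies balls on which $q\approx\Np$ and $q=\Sp$, and the essential oscillation equals $2$. It is also why the paper defines singular points as points with no continuous representative, rather than as failures of approximate continuity.
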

\begin{proof}
Fix $0<\delta<\delta_0$, with $\delta_0<e^{-1}$ sufficiently small for the construction in Proposition~\ref{prop:construction}, and suppress the subscript $\delta$ on $q$, $e$, and $\la$. All constants below may depend on the fixed construction data, including $L$, $c_h$, $\kappa$, and the cutoff function, but are independent of $\delta$ and of the individual complementary interval of $E$. Until the Sobolev extension is established, derivatives of $q$ are understood classically on $X\setminus S_E$.

\emph{Core estimates.}
Write
\[
\mathcal K_\delta
=\{(r,\theta,z):z\notin E,\ 0<r<\rho(z)/2\}.
\]
On this set the cutoff is inactive, so $q=q_f$ with $f=F+\phi$. At a fixed evaluation point $z\notin E$, apply Theorem~\ref{thm:tube} with center $z_0=z$, $\ep=\la(z)$, $h=h(z)$, and $R=4\rho(z)$. The agreement of the local solutions on overlaps identifies their derivatives with those of the assembled solution. In particular, all derivatives in the estimates below are taken within a fixed local solution; no differentiation of a moving choice of center is involved.

Set, as functions of $(r,z)$,
\[
Z(r,z)=\frac{2\la(z)r}{r^2+\la(z)^2},\qquad
\eta_r(z)=\frac{r^2\ell_{\la(z)}(r)}{h(z)^2}.
\]
The explicit choices in Lemma~\ref{lem:flat-scale}, together with the monotonicity of $\ell_\la$, imply on $\mathcal K_\delta$ that
\[
\frac rh\le\frac1{256},\qquad
\eta_r
\le\left(\frac r{4\rho}\right)^2
\frac{(4\rho)^2\ell_\la(4\rho)}{h^2}
\le\frac\eta{1024}\le\frac1{1024}.
\]
Here $0<\eta\le1$ is the constant in Theorem~\ref{thm:tube}. If $C_\phi$ denotes the uniform constant in \eqref{eq:phi-bounds}, that estimate gives separately
\[
|\phi|\le C_\phi Z\eta_r,\qquad
|\phi_r|\le C_\phi\frac Zr\eta_r,\qquad
|\phi_z|\le C_\phi\frac Zh\eta_r.
\]

For completeness, the logarithmic derivative of the bubble scale has a bound uniform in the gap. On a lifted gap $(a,b)$, put $u=z-a$ and $v=b-z$. Since $A=(uv)^{-1}$ and $u+v=b-a\le L$,
\[
A'=(u-v)A^2,\qquad
\frac{|\la'|}{\la}=|A'|
\le LA^2\le L(1+A)^2=\frac{Lc_h}{h}.
\]
Differentiating $F=2\arctan(r/\la(z))$ therefore gives
\begin{equation}\label{eq:F-derivatives}
F_r=Z/r,\qquad F_z=-Z\la'/\la,
\qquad |\la'|/\la\le Lc_h/h.
\end{equation}
Also $\sin F=Z$. Since sine is $1$-Lipschitz on the real line, the preceding estimates yield
\[
\begin{aligned}
|f_r|&\le(1+C_\phi\eta_r)\frac Zr,\\
|f_z|&\le(Lc_h+C_\phi\eta_r)\frac Zh,\\
\frac{|\sin f|}{r}
&\le\frac{|\sin F|+|\phi|}{r}
\le(1+C_\phi\eta_r)\frac Zr.
\end{aligned}
\]
To translate these into the full Cartesian gradient, define
\[
e_f=(\cos f\cos\theta,\cos f\sin\theta,-\sin f),\qquad
e_\theta=(-\sin\theta,\cos\theta,0).
\]
These vectors are orthonormal, and
$q_r=f_re_f$, $q_z=f_ze_f$, $q_\theta=\sin f\,e_\theta$. Thus the Euclidean cylindrical-coordinate formula gives
\[
\begin{aligned}
|Dq|^2
&=|q_r|^2+|q_z|^2+r^{-2}|q_\theta|^2
=f_r^2+f_z^2+\frac{\sin^2 f}{r^2}\\
&\le\left[2(1+C_\phi\eta_r)^2
+(Lc_h+C_\phi\eta_r)^2\left(\frac rh\right)^2\right]
\frac{Z^2}{r^2}
\le C_*\frac{Z^2}{r^2},
\end{aligned}
\]
where one may take $C_*=2(1+C_\phi)^2+(Lc_h+C_\phi)^2$. In particular,
\begin{equation}\label{eq:core-gradient}
|Dq|\le CZ/r,
\end{equation}
with $C=\sqrt{C_*}$ independent of $\delta$ and the gap. This explicitly controls the radial, axial, and angular contributions to the energy.

\emph{Integration of the energy.}
At fixed $z\notin E$, the primitive of $4\la^2r/(r^2+\la^2)^2$ is $-2\la^2/(r^2+\la^2)$. Consequently
\begin{equation}\label{eq:slice-energy}
\int_0^\rho \frac{Z^2}{r^2}r\dd r
=\int_0^\rho\frac{4\la^2r}{(r^2+\la^2)^2}\dd r
=\frac{2\rho^2}{\rho^2+\la^2}\le2.
\end{equation}
The core ends at $\rho/2$; its nonnegative radial energy bound is at most the scalar integral in \eqref{eq:slice-energy}. Using the volume element $\dd x=r\dd r\dd\theta\dd z$ and Tonelli's theorem, we obtain
\[
\begin{aligned}
\int_{\mathcal K_\delta}|Dq|^2\dd x
&\le2\pi C_*\int_{\T\setminus E}
\int_0^{\rho(z)/2}\frac{Z(r,z)^2}{r^2}r\dd r\dd z\\
&\le4\pi C_*\cL^1(\T\setminus E)
\le4\pi C_*L.
\end{aligned}
\]

On the cutoff annuli $\mathcal A_\delta$, the case $m=1$ of \eqref{eq:q-annulus} gives $|Dq|\le C_1\la\rho^{-2}$. Therefore, for each $z\notin E$,
\[
\begin{aligned}
\int_0^{2\pi}\int_{\rho/2}^{3\rho/4}|Dq|^2r\dd r\dd\theta
&\le2\pi C_1^2\la^2\rho^{-4}
\left[\frac{r^2}{2}\right]_{\rho/2}^{3\rho/4}\\
&=\frac{5\pi C_1^2}{16}\frac{\la^2}{\rho^2}.
\end{aligned}
\]
The scale formulas \eqref{eq:flat-scale}--\eqref{eq:radii} imply
\[
\frac{\la^2}{\rho^2}
=\frac{\delta^2B^2}{\kappa^2}e^{-2A}(1+A)^8
\le\frac{4^8e^{-6}}{\kappa^2}\delta^2B^2.
\]
Indeed, the logarithmic derivative of $e^{-2A}(1+A)^8$ is $-2+8/(1+A)$, so its maximum for $A\ge0$ is attained at $A=3$. Moreover, if $t=-\log\delta>1$, then
\[
\delta B=(1+t)e^{-t}\le2e^{-1},
\]
because the derivative of $(1+t)e^{-t}$ is $-te^{-t}<0$. Outside the core and annuli, $q$ is constant at every off-axis point; the axis has three-dimensional measure zero. Thus
\[
\begin{aligned}
\int_{X\setminus S_E}|Dq|^2\dd x
&\le4\pi C_*L
+\frac{5\pi C_1^2L}{16}
\frac{4^8e^{-6}}{\kappa^2}\delta^2B^2\\
&\le4\pi C_*L
+\frac{5\pi C_1^2L}{16}
\frac{4^9e^{-8}}{\kappa^2}<\infty.
\end{aligned}
\]
Both $\kappa$ and $C_1$ were fixed independently of $\delta$ and the gap. The integration over all gaps uses only their total length, at most $L$, so the displayed bound is uniform even when there are infinitely many gaps.

\emph{Sobolev extension and the weak equation.}
The Lipschitz inclusion $z\mapsto(0,0,z)$ and the capacity criterion in Subsection~\ref{sec:capacity} give
\[
\cL^1(E)=0\quad\Longrightarrow\quad
\cH^1(S_E)=0\quad\Longrightarrow\quad\capacity_2(S_E)=0.
\] The map $q$ is bounded and smooth off $S_E$, and the preceding calculation proves that its classical gradient is square integrable there. The first assertion of Lemma~\ref{lem:capacity} therefore gives a global Sobolev extension whose weak gradient agrees almost everywhere with this classical gradient. Since $|q|=1$ almost everywhere and $|X|=L^3<\infty$, it follows that $q\in W^{1,2}(X,\sph)$, with the same uniform energy bound. Values assigned on $S_E$ do not affect this Sobolev class.

We verify the inhomogeneous equation directly. Let $\Psi\in C^\infty(X,\R^3)$ be arbitrary. By the capacity-cutoff construction in Lemma~\ref{lem:capacity}, choose $\zeta_k\in C^\infty(X)$ such that
\[
0\le\zeta_k\le1,\qquad
\zeta_k=0\ \text{near }S_E,\qquad
\zeta_k\longrightarrow1\ \text{a.e.},\qquad
\norm{D\zeta_k}_{L^2(X)}\longrightarrow0.
\]
On $X\setminus S_E$ the equation $\Delta q+|Dq|^2q=e$ holds classically. Testing it with $\zeta_k\Psi$, whose support avoids $S_E$, and integrating by parts yields
\[
\begin{aligned}
&\int_X\zeta_k \langle Dq,D\Psi\rangle\dd x
+\int_X\sum_{i=1}^3(\partial_i\zeta_k)
\partial_iq\cdot\Psi\dd x\\
&\hspace{35mm}=\int_X\zeta_k(|Dq|^2q-e)\cdot\Psi\dd x.
\end{aligned}
\]
Here $\langle Dq,D\Psi\rangle=\sum_{i=1}^3\partial_iq\cdot\partial_i\Psi$. The additional cutoff term satisfies
\[
\left|\int_X\sum_{i=1}^3(\partial_i\zeta_k)
\partial_iq\cdot\Psi\dd x\right|
\le\norm\Psi_{L^\infty(X)}\norm{Dq}_{L^2(X)}
\norm{D\zeta_k}_{L^2(X)}\longrightarrow0.
\]
The other three integrands are dominated, respectively, by
\[
\norm{D\Psi}_{L^\infty(X)}|Dq|,\qquad
\norm\Psi_{L^\infty(X)}|Dq|^2,\qquad
\norm\Psi_{L^\infty(X)}|e|.
\]
All are integrable: $Dq\in L^2(X)$, $X$ has finite volume, and Lemma~\ref{lem:annulus} gives $e\in C^\infty(X,\R^3)$. Dominated convergence therefore gives
\[
\int_X\langle Dq,D\Psi\rangle\dd x
=\int_X(|Dq|^2q-e)\cdot\Psi\dd x.
\]
This is precisely \eqref{eq:weak-tension}. In particular, the smooth field $e$ is the actual distributional tension of the Sobolev map, with no additional distribution supported on $S_E$.

\emph{Essential singularities.}
Smoothness on $X\setminus S_E$ already gives $\sing(q)\subset S_E$. Fix $a=(0,0,z_0)\in S_E$, and choose $s>0$ small enough that $B_s(a)$ lies in a flat coordinate neighborhood. Because a null closed subset of the circle contains no open arc, $\T\setminus E$ is dense. We may therefore choose $z_+\notin E$ with $|z_+-z_0|<s/4$, using a local lift of the circle coordinate. The points
\[
a_+=(0,0,z_+),\qquad a_-=(s/4,0,z_0)
\]
belong to $B_s(a)\setminus S_E$. The construction gives $q(a_+)=\Np$ and $q(a_-)=\Sp$.

For every $0<\gamma<1$, continuity at these two regular points gives open balls $V_+$ and $V_-$, with closures contained in $B_s(a)\setminus S_E$, such that
\[
|q-\Np|<\gamma\quad\text{on }V_+,\qquad
|q-\Sp|<\gamma\quad\text{on }V_-.
\]
Both balls have positive three-dimensional measure. Since $|\Np-\Sp|=2$, the triangle inequality gives
\[
|q(x_+)-q(x_-)|\ge2-2\gamma
\qquad(x_+\in V_+,\ x_-\in V_-).
\]
Their Cartesian product has positive product measure, so this lower bound implies
\[
\operatorname*{ess\,sup}_{(x,y)\in B_s(a)\times B_s(a)}
|q(x)-q(y)|\ge2-2\gamma.
\]
Letting $\gamma\downarrow0$ and using $\operatorname{diam}\sph=2$, we obtain
\[
\operatorname*{ess\,sup}_{x,y\in B_s(a)}|q(x)-q(y)|=2
\quad\text{for every sufficiently small }s>0.
\]
This identity is invariant under changes on null sets. A representative $\widehat q$ continuous at $a$ would satisfy
\[
\operatorname{diam}\widehat q(B_s(a))
\le2\sup_{x\in B_s(a)}|\widehat q(x)-\widehat q(a)|\to0,
\]
a contradiction. Thus every point of $S_E$ is an essential discontinuity and $\sing(q)=S_E$.
\end{proof}

\subsection{The global stress identity}\label{sec:stress}
We now establish the global distributional hypothesis of Theorem~\ref{prop:repair}. Define the local reference map, for $r>0$,
\begin{equation}\label{eq:reference-map}
Q(r,\theta,z)
=\frac{(2\la(z)r\cos\theta,2\la(z)r\sin\theta,\la(z)^2-r^2)}
{r^2+\la(z)^2}.
\end{equation}
It is smooth in $z$ for fixed $r>0$, including $z\in E$, where it equals $\Sp$. The reference map is used only near the axis and need not be extended periodically in the transverse variables.

\begin{lemma}\label{lem:stress-comparison}
For a fixed sufficiently small $\delta$, the tensor difference on a tube of radius $r$ satisfies
\begin{equation}\label{eq:stress-L1}
\int_{\T}\int_0^{2\pi}r|T_q-T_Q|\dd\theta\dd z\longrightarrow0
\qquad(r\downarrow0).
\end{equation}
\end{lemma}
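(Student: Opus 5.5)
For each fixed small $r>0$ I would split the $z$-integral in \eqref{eq:stress-L1} into the set $E$ and the gaps. On $E$ the integrand vanishes: for $z\in E$ and $r>0$ we have $q=\Sp$ (the exterior convention) and $Q=\Sp$ (since $\la=0$), and both maps are locally constant in $x$ near $(r,\theta,z)$ when $z$ is interior in the relevant sense. Since $\cL^1(E)=0$, the set $E$ contributes nothing anyway. So the task is to bound $\int_{\T\setminus E}\int_0^{2\pi} r|T_q-T_Q|\dd\theta\dd z$ by a $z$-integrable majorant $g_r(z)$ with $g_r(z)\to0$ pointwise, and then conclude by dominated convergence. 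Here I use that $T$ is quadratic: $|T_q-T_Q|\le C(|Dq|+|DQ|)|Dq-DQ|$.

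I fix $z$ in a gap. For $r\ge\rho(z)/2$, i.e.\ in the cutoff annulus or beyond, I would bound crudely. The estimate \eqref{eq:q-annulus} and the exterior constancy give $|Dq|\le C\la\rho^{-2}$ there. For $Q$, the explicit formula gives $|DQ|\le C(Z/r)(1+r|\la'|/\la)\le C\la/r^2$ when $r\ge\la$, using $|\la'|/\la\le C/h$ and $r\ll h$. Because $\rho(z)\to0$ as $z\to E$, for fixed $r$ only the $z$ with $\rho(z)>r/ 2\cdot$const, i.e.\ at distance $\gtrsim$ some $d(r)>0$ from $E$, fall in the regime $r<\rho(z)/2$. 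For the remaining $z$, the contribution is at most $Cr\cdot(\la/r^2)^2\le C\la^2/r^3\le C\la^2/\rho^3$. That is a bounded majorant because $\la/\rho$ is small, and it tends to zero pointwise as $r\downarrow0$ since eventually $r<\rho(z)/2$ for each fixed $z\notin E$.

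In the core $r<\rho(z)/2$, I would write $q=q_f$ and $Q=q_F$ with $f=F+\phi$. By the same cylindrical formula used for \eqref{eq:core-gradient}, $|Dq-DQ|\le C(|\phi_r|+|\phi_z|+|F_z|\,|\phi|+|\phi|/r)$. The bound \eqref{eq:phi-bounds} then gives $|Dq-DQ|\le C(Z/r)\eta_r$ with $\eta_r=r^2\ell_\la(r)/h^2$. Combined with $|Dq|+|DQ|\le CZ/r$, this yields
\[
r\int_0^{2\pi}|T_q-T_Q|\dd\theta\le C\frac{Z^2}{r}\eta_r\le C\frac{4\la^2 r}{(r^2+\la^2)^2}\,\frac{r^2\ell_\la(r)}{h^2}\le C\frac{r\,\ell_\la(r)}{h^2}\cdot\frac{\la^2r^2}{(r^2+\la^2)^2}.
\]
The last factor is at most $1/4$. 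For fixed $z$, the whole expression is $O(\la^2 r\,\ell_\la(r)/(\la^4 h^2))\to0$ as $r\downarrow0$, so it vanishes pointwise. For a uniform majorant I would use $\eta_r\le\eta/1024$ together with $Z^2/r\le 1/r$. That alone is not integrable in $z$ uniformly, so instead I bound $Z^2/r\le \la^{-1}\cdot\sup_t (t Z(t)^2/\la\ldots)$. This is the step I expect to be the main obstacle: getting a $z$-integrable majorant independent of $r$, uniform over the gaps.

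My first attempt would be the explicit bound $\frac{Z^2}{r}\eta_r\le C\frac{\la^2 r^3\ell_\la(r)}{(r^2+\la^2)^2h^2}\le C\frac{r\,\ell_\la(r)}{h^2}\le C\frac{\rho\,(A+B)}{h^2}$, using $r<\rho$ and \eqref{eq:log-radius}. With $\rho=\kappa B^{-1}(1+A)^{-4}$ and $h=c_h(1+A)^{-2}$, this is $\le C\kappa(A+B)/(B c_h^2)\le C(1+A)$. That is not bounded near $E$ as it stands. I would repair it by keeping the sharper factor from \eqref{eq:log-radius}: $\rho\,\ell/h^2$ is actually $\le C/( B(1+A)^{4}\cdot(1+A)^{-4})$. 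More carefully, $\rho(A+B)/h^2\le C(A+B)/(B)$, and if that still fails I would use $r<\rho$ only through $\eta_r\le(r/4\rho)^2\eta$ and $Z^2/r\le Z/r\le 2/\la$. The product is then $\le C r^2/(\rho^2\la)\cdot$, and I would instead integrate $Z^2/r$ against the cap $r<\rho$ to obtain the constant bound $C\rho\,\ell_\la(\rho)/h^2\cdot\sup$. Once a bound $g_r(z)\le C$ uniform in $z\notin E$ and $r$ is obtained, dominated convergence on $\T$, which has finite measure, together with the pointwise limits above, proves \eqref{eq:stress-L1}.
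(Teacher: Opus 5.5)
Your outer-region estimate ($r\ge\rho(z)/2$) is the paper's: bound $r|T_q-T_Q|$ by $C\la^2\rho^{-3}$, then use $1_{\{r\ge\rho(z)/2\}}\to0$ for $z\notin E$. Two small corrections there.

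\begin{itemize}
\item The claim $r\ll h$ fails on part of this region. For fixed $r$, $h(z)<r$ once $z$ is close enough to $E$. So keep the term $C\la h^{-1}r^{-1}$ in $|DQ|$ and absorb it using $r\ge\rho/2$ and $\rho\le h$: this gives $r\cdot\la^2h^{-2}r^{-2}\le C\la^2\rho^{-3}$.
\item Boundedness of $\la^2\rho^{-3}=\kappa^{-3}\delta^2B^3e^{-2A}(1+A)^{12}$ comes from the exponential flatness of $\la$ against the polynomial decay of $\rho$. Smallness of $\la/\rho$ alone does not give it.
\end{itemize}

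The genuine gap is the core majorant, which you correctly flag as the main obstacle but never produce. From
$r\int_0^{2\pi}|T_q-T_Q|\dd\theta\le C\frac{\la^2r^3}{(r^2+\la^2)^2}\frac{\ell_\la(r)}{h^2}$
you discard the factor $\la^2r^2/(r^2+\la^2)^2\le1/4$, and with it every trace of $\la$. What remains, $Cr\ell_\la(r)/h^2$, has supremum over $r<\rho(z)/2$ comparable to $(A+B)/B$, which grows like $A$ near $E$. But $A$ is not integrable at gap endpoints, since $A\approx((b-a)(z-a))^{-1}$ near $z=a$. So no $r$-independent integrable majorant can be extracted from that bound.

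Your proposed repairs do not help. The bound $Z^2/r\le2/\la$ gives something of size $1/\la$. ``Integrating over the cap $r<\rho$'' is not available, because $r$ is the fixed tube radius, not an integration variable.

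The missing step is to keep the $\la$ factor:
\[
\sup_{r>0} rZ(r,z)^2=\sup_{t>0}\frac{4\la(z)t^3}{(1+t^2)^2}\le C\la(z).
\]
By monotonicity of $\ell_\la$, this gives $r|T_q-T_Q|\le C\la\ell_\la(2\rho)/h^2$ on $r<\rho/2$. This majorant is at most $C\delta e^{-A}(A+B)(1+A)^4$, hence bounded and flat on $E$, uniformly in $r$. The point is that the exponential smallness of $\la=\delta e^{-A}$ beats the polynomial growth of $\ell_\la/h^2$. With this majorant, and $C\la^2\rho^{-3}$ outside the core, your pointwise limits and dominated convergence finish the proof exactly as in the paper.
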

\begin{proof}
Use $Z$ and $\eta_r$ from the core estimates in the proof of Proposition~\ref{prop:q-properties}. On $r<\rho(z)/2$, the estimate \eqref{eq:phi-bounds} gives
\begin{equation}\label{eq:gradient-difference}
|DQ|\le CZ/r,\qquad |Dq-DQ|\le C(Z/r)\eta_r.
\end{equation}
For clarity, differentiating the spherical frame contributes terms $|\phi||F_r|$ and $|\phi||F_z|$, in addition to $|\phi_r|$ and $|\phi_z|$. They obey \eqref{eq:gradient-difference} because $Z\le1$ and $r/h\ll1$. The angular term is bounded by $|\sin f-\sin F|/r\le|\phi|/r$.
It follows that
\begin{equation}\label{eq:stress-core}
r|T_q-T_Q|\le CrZ^2\ell_\la(r)/h^2
\le C\la\ell_\la(2\rho)/h^2.
\end{equation}
The last inequality uses $\sup_{r>0}rZ(r,z)^2\le C\la(z)$. The majorant
\[
b_1(z)=C\la(z)\ell_{\la(z)}(2\rho(z))/h(z)^2
\]
extends flatly by zero on $E$ and belongs to $L^1(\T)$. For every fixed $z\notin E$, $r|T_q-T_Q|\to0$ as $r\downarrow0$.

On $r\ge\rho(z)/2$, \eqref{eq:q-annulus} gives $|Dq|\le C\la\rho^{-2}$ wherever $q$ is nonconstant. From \eqref{eq:F-derivatives}, the reference map satisfies
\[
|DQ|\le C\la r^{-2}+C\la h^{-1}r^{-1}.
\]
Since $\rho\ll h$, for all sufficiently small tube radii this implies
\begin{equation}\label{eq:stress-outer}
r|T_q-T_Q|\le C\la^2\rho^{-3}.
\end{equation}
For the constant exterior only the reference term remains; the same bound follows from $r\ge\rho/2$. The majorant $b_2=C\la^2\rho^{-3}$ also extends flatly by zero and belongs to $L^1(\T)$. Moreover,
\[
1_{\{r\ge\rho(z)/2\}}\longrightarrow0\quad(z\notin E),
\qquad q(r,\theta,z)=Q(r,\theta,z)=\Sp\quad(z\in E,\ r>0).
\] Dominated convergence applied to \eqref{eq:stress-core} and \eqref{eq:stress-outer} proves \eqref{eq:stress-L1}.
\end{proof}

\begin{proposition}\label{prop:stress}
On all of $X$, in distributions,
\begin{equation}\label{eq:global-stress}
\Div T_q=\cF,\qquad \cF_j=-2e\cdot\partial_jq.
\end{equation}
The field $\cF$ is smooth, has zero mean, and satisfies
\begin{equation}\label{eq:force-Cm}
\norm\cF_{C^m}\le C_m\delta^2B^{m+5}.
\end{equation}
\end{proposition}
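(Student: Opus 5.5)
The smoothness, flatness and the bound \eqref{eq:force-Cm} are already contained in Lemma~\ref{lem:annulus}: $\cF=-2e\cdot Dq$ is the flat zero extension of the classical field, and \eqref{eq:residual-Cm} gives the $C^m$ estimate. Once \eqref{eq:global-stress} holds on all of $X$, zero mean follows by testing the distributional identity against the constant vector fields $Y=e_j$, since $DY=0$. The real content is therefore the distributional identity across $S_E$. Fix $Y\in C^\infty(X,\R^3)$. We must show $-\int_X T_q:DY=\int_X\cF\cdot Y$.

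First cut out a tube. On $X\setminus\{r\le s\}$ the identity $\Div T_q=\cF$ holds classically by \eqref{eq:stress-identity-smooth}. Since $q$ is constant near the chart boundary, we may work in the cylinder $p(D)\times\T$ and integrate by parts on $\{r>s\}$:
\[
-\int_{\{r>s\}}T_q:DY=\int_{\{r>s\}}\cF\cdot Y+\int_{\T}\int_0^{2\pi}s\,(T_q\nu)\cdot Y\dd\theta\dd z ,
\]
with $\nu$ the inward normal $-\partial_r$. As $s\downarrow0$, the volume terms converge because $T_q\in L^1$ (finite energy) and $\cF$ is bounded. So it suffices to show that the boundary flux $\Phi_q(s)=\int_\T\int_0^{2\pi}s\,(T_q\nu)\cdot Y$ tends to zero.

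By Lemma~\ref{lem:stress-comparison}, $\Phi_q(s)-\Phi_Q(s)\to0$, so we only need to handle the flux of the explicit reference map $Q$ from \eqref{eq:reference-map}. This is the main obstacle. For fixed $z$, $Q$ is the rescaled inverse stereographic bubble with scale $\la(z)$. Its transverse part is a conformal harmonic map in two variables, so the $2\times2$ transverse stress $|D_\xi Q|^2I-2D_\xi Q^TD_\xi Q$ vanishes identically. What remains in $T_Q\nu$ comes from the $z$-derivative. The cross term $-2\,\partial_rQ\cdot\partial_zQ$ appears in the $z$-component, and the term $|\partial_zQ|^2$ appears in the radial component. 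Using $\partial_zQ=(\la'/\la)\,\la\partial_\la Q$, one computes $\partial_rQ\cdot\partial_zQ$ explicitly. It is a multiple of $\la'/\la$ times a rational function of $r/\la$, namely $Z^2/r$ up to sign, since $\la\partial_\la F=-rF_r$. Hence
\[
s\int_0^{2\pi}(\partial_rQ\cdot\partial_zQ)\,Y_z\dd\theta=-\frac{2\pi\la'(z)}{\la(z)}\,sZ(s,z)\cdot\frac{Z(s,z)}{1}\,\overline{Y_z}(s,z)+\cdots ,
\]
and this is a $z$-derivative of an explicit function of the form $\partial_z\,g(s/\la(z))$, with $g$ a primitive in the scale variable. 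We then integrate by parts in $z$ over each gap. This is legitimate because $g(s/\la)\to g(\infty)$ flatly at the endpoints, where $\la\to0$ flatly. The derivative moves onto $Y$, giving a term bounded by $\int_\T|g(s/\la(z))-g(\infty)|\,|\partial_zY|\dd z$. This integrand tends to $0$ for each $z\notin E$, since $s/\la\to0$ and $g(0)=g(\infty)$ by the boundedness and symmetry of the bubble flux; it is constant on $E$. Dominated convergence over $\T\setminus E$, together with $\cL^1(E)=0$ so that $E$ contributes nothing, gives the limit $0$.

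The radial term $s|\partial_zQ|^2\lesssim s(\la'/\la)^2Z^2\le C s\la/h^2$ is bounded by an integrable majorant and tends to zero pointwise, so it vanishes by dominated convergence. The angular dependence of $Y$ is handled by replacing $Y(s,\theta,z)$ with $Y(0,0,z)$ at cost $O(s)\|DY\|_\infty$, multiplied by a flux that is uniformly $L^1$ in $z$. Combining these steps yields $\Phi_q(s)\to0$ and hence \eqref{eq:global-stress}. The delicate point I expect is exhibiting the axial flux as an exact $z$-derivative with a boundary function whose limit vanishes; this is exactly where nullity of $E$ enters.
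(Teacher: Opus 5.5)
Your overall route is the paper's: excise a tube, pass to the reference map $Q$ via Lemma~\ref{lem:stress-comparison}, use conformality so that only $(T_Q)_{rr}=F_z^2$ and $(T_Q)_{rz}=-2F_rF_z$ survive, dispose of the radial flux by dominated convergence, and write the axial flux as an exact $z$-derivative. \textbf{The decisive step, however, rests on a false claim, namely $g(0)=g(\infty)$.}

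With $t=s/\la(z)$ one has $\partial_zt=-t\la'/\la$, hence
\[
s\,(T_Q)_{rz}=\frac{2Z^2\la'}{\la}=-\frac{8t\,\partial_zt}{(1+t^2)^2}=\partial_z\Bigl(\frac{4}{1+t^2}\Bigr).
\]
So $g(t)=4/(1+t^2)$ up to an additive constant. This $g$ is strictly monotone, because for fixed $z$ the cross term $\partial_rQ\cdot\partial_zQ=-Z^2\la'/(r\la)$ has one sign at all radii. In fact $2\pi s(T_Q)_{rz}=-\partial_z\bigl(8\pi s^2/(s^2+\la^2)\bigr)$, and $8\pi s^2/(s^2+\la^2)=\int_{|\xi|<s}|D_\xi Q|^2\dd\xi$ runs monotonically from $0$ to $8\pi$. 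Thus $g(0)-g(\infty)=4\neq0$, and no symmetry can make it vanish.

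Consequently $|g(s/\la(z))-g(\infty)|\to4$ for every $z\notin E$. Your majorant $\int_\T|g(s/\la)-g(\infty)|\,|\partial_zY|\dd z$ therefore tends to $4\int_{\T\setminus E}|\partial_zY|\dd z$, not to $0$. A warning sign is that in your normalization the integrand vanishes identically on $E$, so $\cL^1(E)=0$ is never actually used. Yet if $\cL^1(E)>0$, the limiting $Q$-flux would be a nonzero multiple of $\int_E\partial_zY_z(0,0,z)\dd z$, a genuine distribution on $S_E$.

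The repair is to keep the signed integral. The paper normalizes at $t=0$, writing $s(T_Q)_{rz}=-4\partial_z\bigl(s^2/(s^2+\la^2)\bigr)$. Since $s^2/(s^2+\la(z)^2)$ is smooth on all of $\T$ for $s>0$, it integrates by parts over the whole circle with no gap-by-gap boundary terms. The weight $s^2/(s^2+\la^2)\in[0,1]$ converges to $1_E$, which vanishes a.e.\ precisely because $\cL^1(E)=0$. Equivalently, in your normalization the signed limit is a multiple of $\int_{\T\setminus E}\partial_zY_z\dd z=\int_\T\partial_zY_z\dd z=0$, using nullity and periodicity.

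There are also some smaller points.
\begin{enumerate}
\item Freezing $Y$ at the axis is unnecessary. Since $s(T_Q)_{rz}$ does not depend on $\theta$, you can integrate by parts in $z$ at each fixed $\theta$, as the paper does.
\item Your justification of that step is also incorrect. The quantity $\int_\T s|(T_Q)_{rz}|\dd z$ is the total variation of $z\mapsto g(s/\la(z))$, which equals $8M^2/(M^2+s^2)$ on a gap where $\max\la=M$. It is therefore unbounded as $s\downarrow0$ whenever $E$ has infinitely many gaps, for example for the Cantor sets of Corollary~\ref{cor:sharp}. The $O(s)$ error still vanishes, but only after a summation over gaps.
\item The factor in your display should be $Z^2$ rather than $sZ^2$.
\item The radial majorant is $C\la/h^2$, obtained from $sZ^2\le2\la$, not $Cs\la/h^2$.
\end{enumerate}
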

\begin{proof}
Smoothness and \eqref{eq:force-Cm} follow from Lemma~\ref{lem:annulus}. Identity \eqref{eq:stress-identity-smooth} holds off $S_E$. It suffices to show that no boundary term remains after excluding a shrinking tube around the whole axis.

%\Needspace{6\baselineskip}

In the orthonormal cylindrical frame the reference map has
\begin{align}
(T_Q)_{rr}&=F_z^2=\frac{4r^2\la'^2}{(r^2+\la^2)^2},
& (T_Q)_{r\theta}&=0,\label{eq:Q-stress-r}\\
r(T_Q)_{rz}&=\frac{8r^2\la\la'}{(r^2+\la^2)^2}
=-4\partial_z\left(\frac{r^2}{r^2+\la^2}\right).&&\label{eq:Q-stress-z}
\end{align}
Let $Y$ be a smooth test vector field and $e_r=(\cos\theta,\sin\theta)$. Taylor expansion gives
\[
\int_0^{2\pi}Y_r(r,\theta,z)\dd\theta
=Y_\perp(0,0,z)\cdot\int_0^{2\pi}e_r\dd\theta+O(r)=O(r).
\]
The radial flux is therefore bounded by an integrable multiple of $\la'^2$, since
\[
r^2(T_Q)_{rr}\le4\la'^2.
\]
It tends to zero pointwise in $z$; on $E$ the derivative $\la'$ vanishes.

For the axial flux use \eqref{eq:Q-stress-z} and integrate by parts in the periodic $z$ coordinate. The result, with the outward radial orientation, is
\begin{equation}\label{eq:axial-flux}
4\int_\T\int_0^{2\pi}
\frac{r^2}{r^2+\la(z)^2}
\partial_zY_z(r\cos\theta,r\sin\theta,z)\dd\theta\dd z.
\end{equation}
Since
\[
0\le\frac{r^2}{r^2+\la(z)^2}\le1,\qquad
\frac{r^2}{r^2+\la(z)^2}\longrightarrow1_E(z)=0\quad\text{a.e.},
\]
dominated convergence makes \eqref{eq:axial-flux} tend to zero. This is the step that uses the nullity of $E$.

Lemma~\ref{lem:stress-comparison} transfers the vanishing flux to $T_q$ for every test vector field. Integration by parts outside the tube, followed by $r\downarrow0$, is justified in the volume terms by $T_q\in L^1$, as follows from Proposition~\ref{prop:q-properties}, and smoothness of $\cF$. It gives
\[
\int_X \langle T_q,DY\rangle\dd x=-\int_X\cF\cdot Y\dd x
\quad\forall\,Y\in C^\infty(X,\R^3),
\]
which is \eqref{eq:global-stress}. Taking $Y$ constant yields $\int_X\cF=0$.
\end{proof}

\begin{proof}[Proof of Proposition~\ref{prop:verification}]
The nullity of $E$ gives $\cH^1(S_E)=0$, hence $\capacity_2(S_E)=0$ by the capacity criterion in Subsection~\ref{sec:capacity}. Lemma~\ref{lem:annulus} gives the smooth flat extensions and the residual estimates. Proposition~\ref{prop:q-properties} gives finite energy, the global tension equation, and the exact essential singular set. Proposition~\ref{prop:stress} gives the global stress identity and zero mean. Finally,
\[
\delta(1+|\log\delta|)^k\longrightarrow0\quad(\delta\downarrow0)
\qquad\text{for every fixed }k\ge0,
\]
which gives the required smallness.
\end{proof}

\section{Proof of the main results}\label{sec:main-proof}
In this section we prove Theorem~\ref{thm:main} and Corollary~\ref{cor:sharp}. Let $E\subset\T$ be a nonempty compact null set, and let $q_\delta,e_\delta,\cF_\delta$ be as in Proposition~\ref{prop:verification}. For sufficiently small $\delta>0$, define $V_\delta,H_\delta$ by \eqref{eq:poisson-H} and $G_\delta$ by \eqref{eq:metric}. We show that
\[
U_\delta=(q_\delta,\mathrm{id}):X\longrightarrow(\sph\times X,G_\delta)
\]
satisfies all the assertions of Theorem~\ref{thm:main}. A full-dimensional null set, followed by restriction, rescaling, and extension independent of the additional domain variables, gives Corollary~\ref{cor:sharp}. We then describe the weak tangents and the geometry of the target.

\subsection{The stationary graph on the three-torus}
\begin{proof}[Proof of Theorem~\ref{thm:main}]
Apply Theorem~\ref{prop:repair} to the map of Proposition~\ref{prop:construction}, using Proposition~\ref{prop:verification}. By \eqref{eq:residual-Cm} and \eqref{eq:force-Cm},
\[
\|e_\delta\|_{C^0}+\|\cF_\delta\|_{C^0}
\le C(\delta B^3+\delta^2B^5)\to0,
\qquad \delta B^k\to0\quad\text{for every fixed }k\ge0.
\] The resulting metric is the explicit metric \eqref{eq:metric}, and $U_\delta=(q_\delta,\mathrm{id})$ is weakly harmonic and stationary on all of $X$.

The identity factor is smooth and does not remove the essential discontinuities of the first component. Hence $\sing U_\delta=S_E$. For every fixed $m$, \eqref{eq:metric-Cm} gives
\begin{equation}\label{eq:metric-limit}
\norm{G_\delta-\Gprod}_{C^m}
\le C_m\bigl(\delta B^{m+3}+\delta^2B^{m+5}\bigr)\longrightarrow0.
\end{equation}
Thus the convergence is in $C^\infty$. Along the graph, $e\cdot q=0$, so
\[
|DU_\delta|_{G_\delta}^2=|Dq_\delta|^2+3+\tr H_\delta.
\]
The uniform energy bound follows from Proposition~\ref{prop:q-properties} and \eqref{eq:H-Cm}.
\end{proof}

\subsection{A full-dimensional null set and the dimensional extension}
We give a direct construction of the null set needed for the sharp dimension statement, so that the endpoint dimension does not rely on a qualitative description of a Cantor set.

\begin{lemma}\label{lem:dimension-one}
There is a compact set $E\subset[0,1]$ with Lebesgue measure zero and $\dim_HE=1$.
\end{lemma}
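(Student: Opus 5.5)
We will build $E$ directly as a union of "fat-free" Cantor sets whose dimensions approach one, each of measure zero, arranged to accumulate at a point so that the union stays compact. Concretely, for each integer $k\ge2$ we take the self-similar Cantor set $C_k\subset[0,1]$ obtained by dividing every surviving interval into $k$ equal pieces and discarding one interior piece (say the middle-ish piece with index $\lfloor k/2\rfloor$). Then $C_k$ is compact, and its Lebesgue measure is $\lim_n((k-1)/k)^n=0$. By the standard similarity-dimension computation for self-similar sets with the open set condition (or the mass distribution principle applied to the uniform Cantor measure), $\dim_H C_k=\log(k-1)/\log k$, which tends to $1$ as $k\to\infty$.

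Next we place scaled copies in disjoint intervals accumulating at $0$. Put
\[
E=\{0\}\cup\bigcup_{k\ge2}\bigl(2^{-k}+2^{-k-1}C_k\bigr)\subset[0,1].
\]
The pieces lie in the pairwise disjoint intervals $[2^{-k},3\cdot2^{-k-1}]$, and they accumulate only at $0$, which we have included. So $E$ is closed and bounded, hence compact. It is a countable union of null sets, so $\cL^1(E)=0$. Since Hausdorff dimension is invariant under similarities and monotone, $\dim_HE\ge\dim_HC_k$ for every $k$, so $\dim_HE\ge1$. The reverse inequality $\dim_HE\le1$ holds trivially for subsets of $\R$.

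The only step needing genuine care is the lower bound $\dim_HC_k\ge\log(k-1)/\log k$. Here we would use the mass distribution principle. Let $\mu_k$ give mass $(k-1)^{-n}$ to each of the $(k-1)^n$ surviving intervals of generation $n$, which have length $k^{-n}$. Any interval $I$ with $k^{-n-1}\le|I|<k^{-n}$ meets at most two generation-$n$ intervals. Hence
\[
\mu_k(I)\le2(k-1)^{-n}\le C_k|I|^{s},\qquad s=\frac{\log(k-1)}{\log k}.
\]
This gives $\cH^s(C_k)>0$. Compactness and measure zero are routine, so this estimate is the main obstacle, and it is standard.
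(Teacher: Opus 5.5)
Your proposal is correct, but it uses a different construction from the paper's.

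\textbf{The paper's route.} The paper builds a single Cantor set: at level $k$ it keeps $2^k$ intervals of length $\ell_k=2^{-k}/(k+1)$. The factor $1/(k+1)$ forces the total length $2^k\ell_k$ to zero. At the same time $2^{-k}/\ell_k^s=2^{-k(1-s)}(k+1)^s$ stays bounded for each fixed $s<1$. So one natural measure $\mu$ satisfies $\mu(I)\le C_s|I|^s$ for every $s<1$ simultaneously.

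\textbf{Your route.} You use self-similar sets $C_k$, each of dimension $\log(k-1)/\log k<1$, and reach dimension one only through monotonicity. The accumulation at $0$ is what keeps the union compact. This relies only on textbook facts: the mass distribution principle for $C_k$, similarity invariance, and countable unions of null sets. Your two-interval count is fine even though adjacent generation-$n$ intervals can share endpoints, since they are $k$-adic grid intervals.

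\textbf{What each buys.} The paper's set has full dimension in every portion. It also comes with a single Frostman-type measure, which the paper reuses in the proof of Corollary~\ref{cor:sharp} when it multiplies by Lebesgue measure on the cube $Q$. With your set that step still goes through, but piece by piece. For a given $s<1$, pick $k$ with $\log(k-1)/\log k\ge s$, use the rescaled $\mu_k\times\cL^{n-3}$ on the copy of $C_k\times Q$, and take the supremum over $k$.

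\textbf{One minor fix.} For $k=2$ there is no interior piece to discard, and $C_2$ degenerates to a single point. Start the union at $k=3$; nothing else changes.
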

\begin{proof}
At level $k\ge0$ retain $2^k$ closed intervals of equal length
\[
\ell_k=\frac{2^{-k}}{k+1}.
\]
Each level-$k$ interval retains the two level-$(k+1)$ intervals at its endpoints. This is possible because $2\ell_{k+1}<\ell_k$. Writing the level-$k$ intervals as $I_{k,j}$, we have
\[
E=\bigcap_{k\ge0}\bigcup_{j=1}^{2^k}I_{k,j},\qquad
\cL^1(E)\le2^k\ell_k=\frac1{k+1}\longrightarrow0.
\]
The nested intersection is compact.

Give each level-$k$ interval mass $2^{-k}$; consistency defines a probability measure $\mu$ on $E$. If $\ell_k\le r<\ell_{k-1}$, an interval of length $r$ meets at most six level-$k$ intervals: these intervals are disjoint and all have length $\ell_k$, and $\ell_{k-1}/\ell_k\le4$. Hence, for every $0<s<1$,
\[
\mu(I)\le6\,2^{-k}\le C_s\ell_k^s\le C_s r^s.
\]
Indeed $2^{-k}/\ell_k^s=2^{-k(1-s)}(k+1)^s$ is bounded. For any interval cover of $E$, the last estimate implies $1\le C_s\sum |I|^s$, so $\dim_HE\ge s$. Letting $s\uparrow1$ proves the assertion.
\end{proof}

\begin{proof}[Proof of Corollary~\ref{cor:sharp}]
Scale and translate the set of Lemma~\ref{lem:dimension-one} into a small coordinate arc of $\T$. A Euclidean coordinate ball in $X$ can be chosen to contain its whole axial copy. Restrict the map from Theorem~\ref{thm:main} to this ball and rescale the domain to obtain the three-dimensional assertion for any desired radius.

For $n>3$, put $d=n-3$ and define
$u(x',x'')=U(x')$ for $(x',x'')\in B_R^n\subset\R^3\times\R^d$, where $U$ is the rescaled map on $B_R^3$. In a fixed smooth embedding of $N$, Fubini's theorem identifies the weak derivatives as
$D_{x'}u=D U(x')$ and $D_{x''}u=0$. If $\omega_d=\cL^d(B_1^d)$, then
\[
\begin{aligned}
E_G(u;B_R^n)
&=\omega_d\int_{B_R^3}(R^2-|x'|^2)^{d/2}|DU(x')|_G^2\dd x'\\
&\le\omega_dR^d E_G(U;B_R^3)<\infty.
\end{aligned}
\]
Compactness of $N$ also gives the required $L^2$ bound, so $u\in W^{1,2}(B_R^n,N)$. For any smooth target test field $\Xi(x',x'',p)$ with compact domain support in $B_R^n$, fix $x''$ and apply the weak equation of $U$ to the slice $\Xi(\cdot,x'',p)$. Extend this field by zero outside its domain support. Integrating the resulting identity in $x''$ is justified by the finite-energy bound above. Since $D_{x''}u=0$, this is the full weak target-variation equation for $u$, including test fields that depend on $x''$.

The stress tensor satisfies
\[
T_u=\begin{pmatrix}
T_U&0\\0&|DU|_G^2I_{n-3}
\end{pmatrix},\qquad
\Div_{\R^n}T_u=
\begin{pmatrix}\Div_{\R^3}T_U\\D_{x''}|DU|_G^2\end{pmatrix}=0.
\]
These are distributional identities. More explicitly, extend a test field $Y=(Y',Y'')\in C_c^\infty(B_R^n,\R^n)$ by zero and use Fubini to write
\[
\begin{aligned}
\int_{B_R^n}\langle T_u,DY\rangle
&=\int_{\R^d}\int_{B_R^3}\langle T_U,D_{x'}Y'\rangle\dd x'\dd x''\\
&\quad+\int_{B_R^3}|DU|_G^2
\left(\int_{\R^d}\Div_{x''}Y''\dd x''\right)\dd x'=0.
\end{aligned}
\]
The first term vanishes by stationarity of $U$, and the second by compact support in $x''$. Thus the extension is stationary under arbitrary domain variations.

Denote the axial set in the rescaled three-ball again by $S_E$. The extended singular set is $(S_E\times\R^{n-3})\cap B_R^n$. This set has dimension $n-2$. For the lower bound, choose a small $(n-3)$-dimensional cube $Q$ such that the entire product $S_E\times Q$ lies inside $B_R^n$; this is possible because $S_E$ is compactly contained in the three-ball. Take the product of the measures in the proof of Lemma~\ref{lem:dimension-one} with Lebesgue measure on $Q$. For every $0<s<1$, the product assigns at most $Cr^{s+n-3}$ to a ball of radius $r$, and therefore has support of dimension at least $s+n-3$. The upper bound follows from containment in an $(n-2)$-plane. Within that plane the set has Lebesgue measure zero by Fubini's theorem, so its $\cH^{n-2}$ measure is zero. The arbitrary $C^\infty$ closeness of the target metric follows from \eqref{eq:metric-limit}.
\end{proof}

% Embedded references; no external bibliography database is required.


\begin{thebibliography}{99}

\bibitem{Bethuel1993} F. Bethuel, \emph{On the singular set of stationary harmonic maps}, Manuscripta Math. \textbf{78} (1993), 417--443.
\bibitem{BCL1986} H. Brezis, J.-M. Coron, and E. H. Lieb, \emph{Harmonic maps with defects}, Comm. Math. Phys. \textbf{107} (1986), 649--705.
\bibitem{ChangWangYang1999} S.-Y. A. Chang, L. Wang, and P. C. Yang, \emph{Regularity of harmonic maps}, Comm. Pure Appl. Math. \textbf{52} (1999), 1099--1111.
\bibitem{CheegerNaber2013} J. Cheeger and A. Naber, \emph{Quantitative stratification and the regularity of harmonic maps and minimal currents}, Comm. Pure Appl. Math. \textbf{66} (2013), 965--990.
\bibitem{CJN2026} B. Chow, W. Jiang, and A. Naber, \emph{Singular Sets of Harmonic Maps and Quantitative Stratification}, preprint, 2026.
\bibitem{DingLiLi2003} W. Ding, J. Li, and W. Li, \emph{Nonstationary weak limit of a stationary harmonic map sequence}, Comm. Pure Appl. Math. \textbf{56} (2003), 270--277.
\bibitem{EellsLemaire1978} J. Eells and L. Lemaire, \emph{A report on harmonic maps}, Bull. London Math. Soc. \textbf{10} (1978), 1--68.
\bibitem{EellsLemaire1988} J. Eells and L. Lemaire, \emph{Another report on harmonic maps}, Bull. London Math. Soc. \textbf{20} (1988), 385--524.
\bibitem{EellsSampson1964} J. Eells, Jr. and J. H. Sampson, \emph{Harmonic mappings of Riemannian manifolds}, Amer. J. Math. \textbf{86} (1964), 109--160.
\bibitem{ElSoufi1995} A. El Soufi, \emph{Indice de Morse des applications harmoniques de la sph\`ere}, Compositio Math. \textbf{95} (1995), 343--362.
\bibitem{Evans1991} L. C. Evans, \emph{Partial regularity for stationary harmonic maps into spheres}, Arch. Rational Mech. Anal. \textbf{116} (1991), 101--113.
\bibitem{HardtLin1987} R. Hardt and F.-H. Lin, \emph{Mappings minimizing the $L^p$ norm of the gradient}, Comm. Pure Appl. Math. \textbf{40} (1987), 555--588.
\bibitem{HardtLin1990} R. Hardt and F.-H. Lin, \emph{The singular set of an energy minimizing map from $B^4$ to $S^2$}, Manuscripta Math. \textbf{69} (1990), 275--289.
\bibitem{HLP1992} R. Hardt, F.-H. Lin, and C.-C. Poon, \emph{Axially symmetric harmonic maps minimizing a relaxed energy}, Comm. Pure Appl. Math. \textbf{45} (1992), 417--459.
\bibitem{Helein1990} F. H\'elein, \emph{R\'egularit\'e des applications faiblement harmoniques entre une surface et une sph\`ere}, C. R. Acad. Sci. Paris S\'er. I Math. \textbf{311} (1990), 519--524.
\bibitem{Helein1991} F. H\'elein, \emph{R\'egularit\'e des applications faiblement harmoniques entre une surface et une vari\'et\'e riemannienne}, C. R. Acad. Sci. Paris S\'er. I Math. \textbf{312} (1991), 591--596.
\bibitem{Helein2002} F. H\'elein, \emph{Harmonic Maps, Conservation Laws and Moving Frames}, 2nd ed., Cambridge Tracts in Mathematics 150, Cambridge University Press, 2002.
\bibitem{Hong1999} M.-C. Hong, \emph{On the Hausdorff dimension of the singular set of stable-stationary harmonic maps}, Comm. Partial Differential Equations \textbf{24} (1999), 1967--1985.
\bibitem{HongWang1999} M.-C. Hong and C.-Y. Wang, \emph{On the singular set of stable-stationary harmonic maps}, Calc. Var. Partial Differential Equations \textbf{9} (1999), 141--156.
\bibitem{Hsu2005} D. Hsu, \emph{An approach to the regularity for stable-stationary harmonic maps}, Proc. Amer. Math. Soc. \textbf{133} (2005), 2805--2812.
\bibitem{HsuLi2008} D. L. Hsu and J. Li, \emph{On the regularity for stationary harmonic maps}, Acta Math. Sin. (Engl. Ser.) \textbf{24} (2008), 223--226.
\bibitem{JagerKaul1983} W. J\"ager and H. Kaul, \emph{Rotationally symmetric harmonic maps from a ball into a sphere and the regularity problem for weak solutions of elliptic systems}, J. Reine Angew. Math. \textbf{343} (1983), 146--161.
\bibitem{Jost1991} J. Jost, \emph{Two-Dimensional Geometric Variational Problems}, Wiley, Chichester, 1991.
\bibitem{Karpukhin2021} M. Karpukhin, \emph{Index of minimal spheres and isoperimetric eigenvalue inequalities}, Invent. Math. \textbf{223} (2021), 335--377.
\bibitem{KarpukhinStern2024} M. Karpukhin and D. L. Stern, \emph{Existence of harmonic maps and eigenvalue optimization in higher dimensions}, Invent. Math. \textbf{236} (2024), 713--778.

\bibitem{Kra} J. Krantz, \emph{Partial Regularity of Stable Stationary Harmonic Maps into Certain Lie Groups},  arXiv:2605.03809v1 [math.DG] , \textbf{2026}.
\bibitem{LiTian1998} J. Li and G. Tian, \emph{A blow-up formula for stationary harmonic maps}, Internat. Math. Res. Notices \textbf{1998}, no.~14, 735--755.
\bibitem{LiX}X. Li, \emph{ Optimal regularity of stable harmonic maps to spheres}, arXiv:2608.20272v2 [math.AP], \textbf{2026}.
\bibitem{Lin1999} F.-H. Lin, \emph{Gradient estimates and blow-up analysis for stationary harmonic maps}, Ann. of Math. (2) \textbf{149} (1999), 785--829.
\bibitem{LinRiviere2002} F.-H. Lin and T. Rivi\`ere, \emph{Energy quantization for harmonic maps}, Duke Math. J. \textbf{111} (2002), 177--193.
\bibitem{LinWang2006} F.-H. Lin and C.-Y. Wang, \emph{Stable stationary harmonic maps to spheres}, Acta Math. Sin. (Engl. Ser.) \textbf{22} (2006), 319--330.
\bibitem{LinWang2008} F.-H. Lin and C.-Y. Wang, \emph{The Analysis of Harmonic Maps and Their Heat Flows}, World Scientific, 2008.
\bibitem{Luckhaus1988} S. Luckhaus, \emph{Partial H\"older continuity for minima of certain energies among maps into a Riemannian manifold}, Indiana Univ. Math. J. \textbf{37} (1988), 349--367.
\bibitem{NV2017} A. Naber and D. Valtorta, \emph{Rectifiable-Reifenberg and the regularity of stationary and minimizing harmonic maps}, Ann. of Math. (2) \textbf{185} (2017), 131--227.
\bibitem{NVEnergy2024} A. Naber and D. Valtorta, \emph{Energy identity for stationary harmonic maps}, arXiv:2401.02242, 2024; revised 2026.
\bibitem{Nakajima2006} T. Nakajima, \emph{Singular points of harmonic maps from $4$-dimensional domains into $3$-spheres}, Duke Math. J. \textbf{132} (2006), 531--543.
\bibitem{Nakajima2009} T. Nakajima, \emph{A remark on instability of harmonic maps between spheres}, Pacific J. Math. \textbf{240} (2009), 363--369.
\bibitem{Okayasu1994} T. Okayasu, \emph{Regularity of minimizing harmonic maps into $S^4$, $S^5$ and symmetric spaces}, Math. Ann. \textbf{298} (1994), 193--205.
\bibitem{Riviere1995} T. Rivi\`ere, \emph{Everywhere discontinuous harmonic maps into spheres}, Acta Math. \textbf{175} (1995), 197--226.
\bibitem{RiviereStruwe2008} T. Rivi\`ere and M. Struwe, \emph{Partial regularity for harmonic maps and related problems}, Comm. Pure Appl. Math. \textbf{61} (2008), 451--463.
\bibitem{SacksUhlenbeck1981} J. Sacks and K. Uhlenbeck, \emph{The existence of minimal immersions of $2$-spheres}, Ann. of Math. (2) \textbf{113} (1981), 1--24.
\bibitem{SU1982} R. Schoen and K. Uhlenbeck, \emph{A regularity theory for harmonic maps}, J. Differential Geom. \textbf{17} (1982), 307--335.
\bibitem{Simon1995} L. Simon, \emph{Rectifiability of the singular set of energy minimizing maps}, Calc. Var. Partial Differential Equations \textbf{3} (1995), 1--65.
\bibitem{Simon1996} L. Simon, \emph{Theorems on Regularity and Singularity of Energy Minimizing Maps}, Lectures in Mathematics ETH Z\"urich, Birkh\"auser, 1996.
\bibitem{Xin1980} Y. L. Xin, \emph{Some results on stable harmonic maps}, Duke Math. J. \textbf{47} (1980), 609--613.

\end{thebibliography}
\end{document}